\documentclass[11pt,a4paper]{amsart}

\usepackage[utf8]{inputenc}

\usepackage{cmap}
\usepackage[T1]{fontenc}
\usepackage{amsmath,amssymb,amsthm,amsfonts}
\usepackage{mathtools}
\usepackage{microtype}
\usepackage{enumitem}
\usepackage[margin=1in]{geometry}
\usepackage{xcolor}
\usepackage[all]{xy}
\usepackage{hyperref}
\hypersetup{
  colorlinks=true,
  linkcolor=blue!50!black,
  citecolor=blue!50!black,
  urlcolor=blue!50!black,
}

\newtheorem{theorem}{Theorem}[section]
\newtheorem{proposition}[theorem]{Proposition}
\newtheorem{lemma}[theorem]{Lemma}
\newtheorem{corollary}[theorem]{Corollary}
\newtheorem{conjecture}[theorem]{Conjecture}
\theoremstyle{definition}
\newtheorem{definition}[theorem]{Definition}
\newtheorem{example}[theorem]{Example}

\theoremstyle{remark}
\newtheorem{remark}[theorem]{Remark}

\DeclareMathAlphabet{\pazocal}{OMS}{zplm}{m}{n}

\def\calF{\pazocal{F}}

\def\calH{\pazocal{H}}

\def\calO{\pazocal{O}}
\def\calP{\pazocal{P}}
\def\calQ{\pazocal{Q}}
\def\calR{\pazocal{R}}

\def\calX{\pazocal{X}}

\DeclareSymbolFont{symbolsC}{U}{pxsyc}{m}{n}
\SetSymbolFont{symbolsC}{bold}{U}{pxsyc}{bx}{n}
\DeclareFontSubstitution{U}{pxsyc}{m}{n}

\DeclareMathSymbol{\medcirc}{\mathbin}{symbolsC}{7}
\DeclareMathSymbol{\medbullet}{\mathbin}{symbolsC}{8}

\newcommand{\bk}{\Bbbk}
\newcommand{\AntiPL}{\mathsf{AntiPreLie}}
\newcommand{\AntiPLdual}{\mathsf{AntiPreLie}^{!}}

\newcommand{\Lie}{\mathsf{Lie}}
\newcommand{\Nov}{\mathsf{Nov}}

\newcommand{\Com}{\mathsf{Com}}

\newcommand{\Free}{\calF}
\newcommand{\sgn}{\mathrm{sgn}}
\newcommand{\id}{\mathrm{id}}

\DeclareMathOperator{\Hom}{Hom}

\DeclareMathOperator{\ad}{ad}
\DeclareMathOperator{\Alt}{Alt}
\DeclareMathOperator{\rl}{last}

\newcommand{\LXX}[3]{\ensuremath{
 \xygraph{
!{<0pt,0pt>;<4pt,0pt>:<0pt,-4pt>::}
!{(1,2)}="a"
!{(1,1)}*{\scriptstyle{\bullet}}="b"
!{(2,1)}*{\scriptscriptstyle {}}
!{(-1,-1)}*{\scriptstyle{\bullet}}="c"
!{(0,-1)}*{\scriptscriptstyle {}}
!{(3,-1)}="d"
!{(3,-2)}*{\scriptscriptstyle #3}
!{(-3,-3)}="e"
!{(-3,-4)}*{\scriptscriptstyle #1}
!{(1,-3)}="f"
!{(1,-4)}*{\scriptscriptstyle #2}
"a"-"b"
"b"-"c"
"b"-"d"
"c"-"e"
"c"-"f"
}
}}

\newcommand{\RXX}[3]{\ensuremath{
 \xygraph{
!{<0pt,0pt>;<4pt,0pt>:<0pt,-4pt>::}
!{(5,-4)}*{\scriptscriptstyle #3}
!{(1,2)}="a"
!{(1,1)}*{\scriptstyle{\bullet}}="b"
!{(1,-4)}*{\scriptscriptstyle #2}
!{(-1,-1)}="c"
!{(3,-1)}*{\scriptstyle{\bullet}}="d"
!{(5,-3)}="e"
!{(-1,-2)}*{\scriptscriptstyle #1}
!{(1,-3)}="f"
"a"-"b"
"b"-"c"
"b"-"d"
"d"-"e"
"d"-"f"
}
}}

\newcommand{\LZZ}[3]{\ensuremath{
 \xygraph{
!{<0pt,0pt>;<4pt,0pt>:<0pt,-4pt>::}
!{(1,2)}="a"
!{(1,1)}*{\scriptscriptstyle{}}="b"
!{(-1,-1)}*{\scriptscriptstyle{}}="c"
!{(3,-1)}="d"
!{(3,-2)}*{\scriptscriptstyle #3}
!{(-3,-3)}="e"
!{(-3,-4)}*{\scriptscriptstyle #1}
!{(1,-3)}="f"
!{(1,-4)}*{\scriptscriptstyle #2}
"a"-"b"
"b"-"c"
"b"-"d"
"c"-"e"
"c"-"f"
}
}}

\newcommand{\RZZ}[3]{\ensuremath{
 \xygraph{
!{<0pt,0pt>;<4pt,0pt>:<0pt,-4pt>::}
!{(5,-4)}*{\scriptscriptstyle #3}
!{(1,2)}="a"
!{(1,1)}*{\scriptscriptstyle{}}="b"
!{(1,-4)}*{\scriptscriptstyle #2}
!{(-1,-1)}="c"
!{(3,-1)}*{\scriptscriptstyle{}}="d"
!{(5,-3)}="e"
!{(-1,-2)}*{\scriptscriptstyle #1}
!{(1,-3)}="f"
"a"-"b"
"b"-"c"
"b"-"d"
"d"-"e"
"d"-"f"
}
}}

\newcommand{\LXZ}[3]{\ensuremath{
 \xygraph{
!{<0pt,0pt>;<4pt,0pt>:<0pt,-4pt>::}
!{(1,2)}="a"
!{(1,1)}*{\scriptstyle{\bullet}}="b"
!{(-1,-1)}*{\scriptscriptstyle{}}="c"
!{(3,-1)}="d"
!{(3,-2)}*{\scriptscriptstyle #3}
!{(-3,-3)}="e"
!{(-3,-4)}*{\scriptscriptstyle #1}
!{(1,-3)}="f"
!{(1,-4)}*{\scriptscriptstyle #2}
"a"-"b"
"b"-"c"
"b"-"d"
"c"-"e"
"c"-"f"
}
}}

\newcommand{\RXZ}[3]{\ensuremath{
 \xygraph{
!{<0pt,0pt>;<4pt,0pt>:<0pt,-4pt>::}
!{(5,-4)}*{\scriptscriptstyle #3}
!{(1,2)}="a"
!{(1,1)}*{\scriptstyle{\bullet}}="b"
!{(1,-4)}*{\scriptscriptstyle #2}
!{(-1,-1)}="c"
!{(3,-1)}*{\scriptscriptstyle{}}="d"
!{(5,-3)}="e"
!{(-1,-2)}*{\scriptscriptstyle #1}
!{(1,-3)}="f"
"a"-"b"
"b"-"c"
"b"-"d"
"d"-"e"
"d"-"f"
}
}}

\newcommand{\LZX}[3]{\ensuremath{
 \xygraph{
!{<0pt,0pt>;<4pt,0pt>:<0pt,-4pt>::}
!{(1,2)}="a"
!{(1,1)}*{\scriptscriptstyle{}}="b"
!{(-1,-1)}*{\scriptstyle{\bullet}}="c"
!{(3,-1)}="d"
!{(3,-2)}*{\scriptscriptstyle #3}
!{(-3,-3)}="e"
!{(-3,-4)}*{\scriptscriptstyle #1}
!{(1,-3)}="f"
!{(1,-4)}*{\scriptscriptstyle #2}
"a"-"b"
"b"-"c"
"b"-"d"
"c"-"e"
"c"-"f"
}
}}

\newcommand{\RZX}[3]{\ensuremath{
 \xygraph{
!{<0pt,0pt>;<4pt,0pt>:<0pt,-4pt>::}
!{(5,-4)}*{\scriptscriptstyle #3}
!{(1,2)}="a"
!{(1,1)}*{\scriptscriptstyle{}}="b"
!{(1,-4)}*{\scriptscriptstyle #2}
!{(-1,-1)}="c"
!{(3,-1)}*{\scriptstyle{\bullet}}="d"
!{(5,-3)}="e"
!{(-1,-2)}*{\scriptscriptstyle #1}
!{(1,-3)}="f"
"a"-"b"
"b"-"c"
"b"-"d"
"d"-"e"
"d"-"f"
}
}}

\newcommand{\Xsmall}[2]{\ensuremath{
 \xygraph{
!{<0pt,0pt>;<2.6pt,0pt>:<0pt,-2.6pt>::}
!{(0,1.5)}="a"
!{(0,0)}="b"
!{(0,0)}*{\scriptscriptstyle{}}
!{(-2,-2)}="c"
!{(2,-2)}="d"
!{(-2.5,-3.5)}*{\scriptscriptstyle #1}
!{(2.5,-3.5)}*{\scriptscriptstyle #2}
"a"-"b"
"b"-"c"
"b"-"d"
}
}}

\newcommand{\Zsmall}[2]{\ensuremath{
 \xygraph{
!{<0pt,0pt>;<2.6pt,0pt>:<0pt,-2.6pt>::}
!{(0,1.5)}="a"
!{(0,0)}="b"
!{(0,0)}*{\scriptstyle{\bullet}}
!{(-2,-2)}="c"
!{(2,-2)}="d"
!{(-2.5,-3.5)}*{\scriptscriptstyle #1}
!{(2.5,-3.5)}*{\scriptscriptstyle #2}
"a"-"b"
"b"-"c"
"b"-"d"
}}}

\newcommand{\LZZZ}[4]{\ensuremath{
 \xygraph{
!{<0pt,0pt>;<4pt,0pt>:<0pt,-4pt>::}
!{(1,2)}="a"
!{(1,1)}="b"
!{(-1,-1)}="c"
!{(3,-1)}="d"
!{(3,-2)}*{\scriptscriptstyle #4}
!{(-3,-3)}="e"
!{(1,-4)}*{\scriptscriptstyle #3}
!{(1,-3)}="f"
!{(-5,-5)}="g"
!{(-1,-5)}="h"
!{(-5,-6)}*{\scriptscriptstyle #1}
!{(-1,-6)}*{\scriptscriptstyle #2}
"a"-"b"
"b"-"c"
"b"-"d"
"c"-"e"
"c"-"f"
"e"-"g"
"e"-"h"
}}}

\newcommand{\LXZZ}[4]{\ensuremath{
 \xygraph{
!{<0pt,0pt>;<4pt,0pt>:<0pt,-4pt>::}
!{(1,2)}="a"
!{(1,1)}="b"
!{(1,1)}*{\scriptstyle{\bullet}}
!{(-1,-1)}="c"
!{(3,-1)}="d"
!{(3,-2)}*{\scriptscriptstyle #4}
!{(-3,-3)}="e"
!{(1,-4)}*{\scriptscriptstyle #3}
!{(1,-3)}="f"
!{(-5,-5)}="g"
!{(-1,-5)}="h"
!{(-5,-6)}*{\scriptscriptstyle #1}
!{(-1,-6)}*{\scriptscriptstyle #2}
"a"-"b"
"b"-"c"
"b"-"d"
"c"-"e"
"c"-"f"
"e"-"g"
"e"-"h"
}}}

\newcommand{\LXXZ}[4]{\ensuremath{
 \xygraph{
!{<0pt,0pt>;<4pt,0pt>:<0pt,-4pt>::}
!{(1,2)}="a"
!{(1,1)}="b"
!{(1,1)}*{\scriptstyle{\bullet}}
!{(-1,-1)}="c"
!{(-1,-1)}*{\scriptstyle{\bullet}}
!{(3,-1)}="d"
!{(3,-2)}*{\scriptscriptstyle #4}
!{(-3,-3)}="e"
!{(1,-4)}*{\scriptscriptstyle #3}
!{(1,-3)}="f"
!{(-5,-5)}="g"
!{(-1,-5)}="h"
!{(-5,-6)}*{\scriptscriptstyle #1}
!{(-1,-6)}*{\scriptscriptstyle #2}
"a"-"b"
"b"-"c"
"b"-"d"
"c"-"e"
"c"-"f"
"e"-"g"
"e"-"h"
}}}

\title[The operad of anti-pre-Lie algebras]
{The operad of anti-pre-Lie algebras and its Lie elements}

\author{Chengming Bai}
\address{Chern Institute of Mathematics \& LPMC, Nankai University, Tianjin 300071, China }
\email{baicm@nankai.edu.cn}

\author{Vladimir Dotsenko}
\address{Institute of Mathematics, Academy of Mathematics and Systems Science, Chinese Academy of Sciences, Beijing, 100190, China\newline
Institut de Recherche Mathématique Avancée, UMR7501, Université de Strasbourg et CNRS, Strasbourg 67000, France}
\email{vdotsenko@unistra.fr}

\author{Guilai Liu}
\address{Chern Institute of Mathematics \& LPMC, Nankai University, Tianjin 300071, China
    \newline
    Institut de Recherche Mathématique Avancée, UMR7501, Université de Strasbourg, Strasbourg 67000, France
}
\email{liugl@mail.nankai.edu.cn}

\author{Hongjie Wang}
\address{Chern Institute of Mathematics \& LPMC, Nankai University, Tianjin 300071, China}
\email{hongjiewang@mail.nankai.edu.cn}

\microtypesetup{expansion=false}

\input{glyphtounicode}
\begin{document}

\begin{abstract}
We initiate a systematic operadic approach to anti-pre-Lie algebras. We study the operad controlling anti-pre-Lie algebras, its relationship to the operad of Novikov algebras and its cyclic structure. We establish that the Lie bracket of an anti-pre-Lie algebra satisfies the standard identity of degree five. We also establish a quadratic maximality property of the
anti-pre-Lie operad with respect to this identity.
We then show that the anti-pre-Lie operad and its Koszul dual are not Koszul and give a full description of the latter. Finally, we identify the Lie elements in the Koszul dual operad with the operad of metabelian Lie algebras.
\end{abstract}

\maketitle

\setcounter{tocdepth}{1}
\tableofcontents

\section{Introduction}

An anti-pre-Lie algebra \cite{LiuBai} is a vector space $A$ equipped
with a bilinear operation $\circ$ satisfying the two identities
\begin{align}
  &x \circ (y \circ z) - y \circ (x \circ z) = [y, x] \circ z,
  \label{eq:AP1} \\
  &[x, y] \circ z + [y, z] \circ x + [z, x] \circ y = 0,\;\forall x,y,z\in A,
  \label{eq:AP2}
\end{align}
where $[-,-]$ is the commutator given by $[x,y]=x\circ y-y\circ x$
for all $x,y\in A$. An anti-pre-Lie algebra $(A,\circ)$ is
equivalently characterized as a Lie-admissible algebra such that
the negative left multiplication 
operators give
a representation $(-L_{\circ}, A)$ of the commutator Lie
algebra, in which $L_{\circ}:A\rightarrow\mathrm{End}(A)$ is given
by $L_{\circ}(x)y=x\circ y$ for all $x,y\in A$. This
characterization exhibits a natural similarity with pre-Lie
algebras \cite{Bai,Bur}, where it is the left multiplication
operators that form a representation of the commutator Lie algebra.

A number of algebraic results involving anti-pre-Lie algebras have emerged in recent years.
For instance, anti-pre-Lie bialgebras were introduced in \cite{TPA} as the structures
corresponding to Manin triples of Lie algebras with nondegenerate commutative $2$-cocycles. They also provide a necessary ingredient in
the bialgebra theories of transposed Poisson algebras and relative Poisson algebras \cite{RPA}.
Anti-dendriform algebras were introduced in \cite{adend} as a new type of splitting of associative algebras, in contrast
with the classical splitting provided by dendriform algebras, and their relationship with anti-pre-Lie algebras was also investigated. Both
anti-pre-Lie and anti-dendriform algebras play important roles in the new splitting theory of (transposed) Poisson algebras \cite{NSP},
which continues the line of work initiated by the pre-Poisson algebras of \cite{Aguiar}.
On the structural side, graded anti-pre-Lie algebra structures on the Witt and Virasoro algebras were investigated in \cite{BG1}, while
compatible root-graded anti-pre-Lie algebra structures on finite-dimensional complex simple Lie algebras were
studied in \cite{BG2}. Several natural generalizations of anti-pre-Lie algebras including anti-pre-Lie
superalgebras, compatible anti-pre-Lie algebras and anti-pre-Leibniz algebras have also been studied, see \cite{ChenLiuChen,Normatov,ZhaoLiu}.

The main goal of this paper is to bring methods and ideas of operad theory into the study of anti-pre-Lie algebras. We study the corresponding operad and its Koszul dual, and establish some of their basic properties.

In the first half of the paper, we explain how to define the operad $\AntiPL$ controlling anti-pre-Lie algebras, and present the result of computations of the first few dimensions of its components, as well as the structures of those components as $S_n$-modules. We also discuss the surjective morphism from the operad $\AntiPL$ to the operad of Novikov algebras, as well as cyclic operad structures on these two operads. We also study the commutator Lie algebras of anti-pre-Lie algebras or, in other words, the morphism of operads $\kappa\colon\Lie\longrightarrow\AntiPL$. We identify a rather remarkable property of anti-pre-Lie algebras that previously remained unnoticed: we show that the commutator $a\circ b-b\circ a$ in an anti-pre-Lie algebra always satisfies the so-called standard identity of degree five
  \[
\sum_{\sigma \in S_{4}} \sgn(\sigma)
   \bigl[x_{\sigma(1)}, [x_{\sigma(2)}, [x_{\sigma(3)},
   [x_{\sigma(4)}, x_{5}]]]\bigr] = 0.
  \]
As we explain below, a similar result also holds for the operad of Novikov algebras, but for a much simpler reason: the arity five component of the latter operad does not contain the isotypic component of the standard identity. By contrast, $\AntiPL(5)$ contains a submodule isomorphic to the submodule of $\Lie(5)$ generated by the standard identity of degree five, which makes our result particularly surprising.
We further establish a quadratic maximality property of $\AntiPL$.
More precisely, its defining relations cannot be properly weakened
within the class of binary quadratic operads on the same generating
space while retaining a commutator Lie bracket satisfying the
standard identity of degree five.
A famous conjecture in the theory of identities in Lie algebras \cite{MR4368864} is that the standard identity of degree five implies all identities that hold in the Lie algebra of vector fields on the affine line. This raises a natural question of what kind of Lie algebras arise as commutator algebras of anti-pre-Lie algebras, and suggests that anti-pre-Lie algebras might shed new light on identities of the Lie algebra of vector fields on the affine line.

In the second half of the paper, we study the Koszul dual operad $\AntiPLdual$. We give an explicit presentation of $\AntiPLdual$ by three quadratic identities, and show that $\dim\AntiPLdual(n)$
$=n+1$ for every $n\geq3$. In fact, we give two proofs of this result: one using the fact that the operad $\AntiPLdual$ admits a finite Gröbner basis, and another
that exhibits
a convenient basis compatible with the
symmetric group actions.
 Using this basis, we prove that the suboperad of $\AntiPLdual$ generated by the commutator is isomorphic to the operad of metabelian Lie algebras (Proposition~\ref{prop:dual-metabelian}); its arity-$n$ component has dimension $n-1$ for $n\geq2$. We also use the obtained dimension formula for the operad $\AntiPLdual$ to establish that the operads $\AntiPL$ and $\AntiPLdual$ are not Koszul. We study the cyclic structure of the operad $\AntiPLdual$, and the related notion of a quadratic dual anti-pre-Lie algebra, which we characterise in finite dimensions through representation theory. Finally, we show that the tensor product of a quadratic anti-pre-Lie algebra and a quadratic dual anti-pre-Lie algebra gives rise to a quadratic Lie algebra, and outline a proof of a general operadic result behind this statement.

The paper is organized as follows. In Section~\ref{sec:operad}, we place anti-pre-Lie algebras in the context of operad theory, and prove the abovementioned results about that operad. In Section \ref{sec:Koszul}, we undertake a study of the Koszul dual operad $\AntiPLdual$, proving the abovementioned results about it. In Section~\ref{sec:open}, we present several open questions for future work. Finally, in Appendix \ref{app:standard}, we give a rather technical computational result proving that in Lie algebras arising as commutator algebras of anti-pre-Lie algebras, the standard identity of degree five holds.

Throughout the paper, $\bk$ denotes an arbitrary  field of characteristic zero, and all vector spaces and tensor products are taken over $\bk$.
By an operad we mean a symmetric operad in the category of $\bk$-vector spaces. We denote by $S_n$ the symmetric group on $n$ letters and by $S^\lambda$ the irreducible $S_n$-module corresponding to a partition $\lambda$ of $n$. Thus, in particular, $S^{n}$ and $S^{1^n}$ are the trivial and sign representations, respectively.
 
\section{The anti-pre-Lie operad and its properties}\label{sec:operad}

In this section, we study generators and relations of the operad $\AntiPL$ governing anti-pre-Lie algebras, determine dimensions of components of $\AntiPL$ in low arities and explain the absence of the sign representation of $S_n$ in $\AntiPL(n)$ for $n\ge 3$.
We also study the relationship to the operad of Novikov algebras, the cyclic structure and the Lie elements in the anti-pre-Lie operad.
Finally, we establish a quadratic maximality property of $\AntiPL$
with respect to the standard identity of degree five.

We refer the reader to \cite{BremnerDotsenko,LodayVallette} for basics of operad theory as well as examples of computation of Gröbner bases for operads. For a reader with background in nonassociative ring theory, we simply indicate that any type of algebras $\mathsf{S}$ without constants in its signature gives rise to a collection of $S_n$-modules $\{\calO_\mathsf{S}(n)\}_{n\ge 1}$, where $\calO_\mathsf{S}(n)$ is the vector space of multilinear elements in the free $\mathsf{S}$-algebra on $n$ generators, on which $S_n$ acts by permuting the generators. Multilinear operations can be substituted into one another, and these substitution operations give rise to a structure on $\{\calO_\mathsf{S}(n)\}_{n\ge 1}$ which is axiomatised by the notion of a (symmetric) operad. It turns out that an operad structure can be viewed as a structure of a monoid in a certain monoidal category, which implies that a lot of intuition coming from working with associative algebras (monoids in the monoidal category of vector spaces) can be brought into the theory of operads.

In particular, operads can be presented by generators and relations: the collection of $S_n$-modules of structure operations $\{\calX_\mathsf{S}(n)\}_{n\ge 1}$ of $\mathsf{S}$-algebras provides the generators of the operad $\mathcal{O}_\mathsf{S}$, and the axioms of $\mathsf{S}$-algebras become the relations of that operad. Note that, unlike in the classical theory of varieties of algebras \cite{ZSSS}, we include symmetries of the structure operations in the signature, so that, for instance, commutativity is a property of a generator, not an extra identity.

\subsection{Generators and relations of the operad of anti-pre-Lie algebras}

In the case of anti-pre-Lie algebras, we have only one binary
structure operation without any symmetry, and so
this operad is a quotient of the free operad $\Free(E)$ generated
by a collection $E$ consisting of the single non-zero module $E(2)
= \bk[S_{2}]$, which we view as the vector space with a basis
$x_1x_2$, $x_2x_1$. Elements of $\Free(E)(n)$ are obtained as
iterated substitution of the binary operations. In particular,
$\Free(E)(3)$ has a basis consisting of the twelve elements
\begin{align*}
  L(\sigma) &:=L(\sigma(1),\sigma(2),\sigma(3)):= (x_{\sigma(1)} \circ x_{\sigma(2)}) \circ x_{\sigma(3)},
  \\
  R(\sigma) &:=R(\sigma(1),\sigma(2),\sigma(3)):= x_{\sigma(1)} \circ (x_{\sigma(2)} \circ x_{\sigma(3)})
\end{align*}
for $\sigma \in S_{3}$.
The operad governing anti-pre-Lie algebras is the quotient operad
\[
\AntiPL = \Free(E)/(R_{\AntiPL}),
\]
where $R_{\AntiPL} \subseteq \Free(E)(3)$ is the $S_{3}$-submodule generated by the expansions of \eqref{eq:AP1} and \eqref{eq:AP2}.
By \eqref{eq:AP1} and \eqref{eq:AP2}, we respectively have
 \[
L(1,2,3) + R(1,2,3) - L(2,1,3) - R(2,1,3) = 0,
\]
and  \[
L(1,2,3) - L(2,1,3) + L(2,3,1) - L(3,2,1) + L(3,1,2) - L(1,3,2) = 0.
\]
 By a simple direct computation, we have $\dim R_{\AntiPL} = 4$,
and hence $\dim \AntiPL(3) = 12 - 4 = 8$.

\subsection{Dimensions and symmetric group actions}

In fact, using computer calculations, we can determine some further dimensions of components of the operad $\AntiPL$.

\begin{proposition}
\label{prop:dimensions}
The dimensions of components of the operad $\AntiPL$
in arities $1\le n\le 7$ are given as follows:
\begin{center}
	\begin{tabular}{|c|c|c|c|c|c|c|c|}
		\hline
		$n$ & $1$ & $2$ & $3$ & $4$ & $5$ & $6$ & $7$ \\
		\hline
		$\dim \AntiPL(n)$
		& $1$ & $2$ & $8$ & $45$ & $304$ & $2283$ & $18940$ \\
		\hline
	\end{tabular}
\end{center}
Moreover, the $S_n$-module structures of those components are given as follows:
\begin{center}
	\begin{tabular}{|c|c|}
		\hline
		$n$ & $\AntiPL(n)$ \\
		\hline
		$1$ & $S^{1}$ \\
		\hline
		$2$ & $S^{2}\oplus S^{1^2}$ \\
		\hline
		$3$ & $2S^{3}\oplus 3S^{2,1}$ \\
		\hline
		$4$ & $4S^{4}\oplus 8S^{3,1}\oplus 4S^{2^2}
		\oplus 3S^{2,1^2}$ \\
		\hline
		$5$ & $9S^{5}\oplus 20S^{4,1}\oplus 19S^{3,2}
		\oplus 11S^{3,1^2}\oplus 10S^{2^2,1}
		\oplus S^{2,1^3}$ \\
		\hline
		$6$ & $18S^{6}\oplus 49S^{5,1}\oplus 60S^{4,2}
		\oplus 37S^{4,1^2}\oplus 27S^{3^2}
		\oplus 48S^{3,2,1}\oplus 7S^{3,1^3}
		\oplus 13S^{2^3}\oplus 8S^{2^2,1^2}$ \\
		\hline
		$7$ & $39S^{7}\oplus 112S^{6,1}\oplus 167S^{5,2}
		\oplus 100S^{5,1^2}\oplus 122S^{4,3}
		\oplus 176S^{4,2,1}\oplus 34S^{4,1^3}
		\oplus 79S^{3^2,1}$ \\
		& $\oplus 79S^{3,2^2}\oplus 58S^{3,2,1^2}
		\oplus 5S^{3,1^4}\oplus 26S^{2^3,1}
		\oplus 4S^{2^2,1^3}$ \\
		\hline
	\end{tabular}
\end{center}
 \end{proposition}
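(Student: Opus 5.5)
The plan is to establish this by a computer calculation, organised so that each step can be checked independently. The operad $\AntiPL$ is the quotient of $\Free(E)$ by the operadic ideal generated by the $4$-dimensional $S_3$-module $R_{\AntiPL}\subseteq \Free(E)(3)$. So $\AntiPL(n)=\Free(E)(n)/(R_{\AntiPL})(n)$, where $\Free(E)(n)$ has a basis of binary trees with $n$ labelled leaves, each internal vertex carrying one of the two orderings. This gives $\dim \Free(E)(n)=n!\,C_{n-1}\cdot 2^{n-1}/2^{n-1}\cdot$ (planar binary trees), that is, $\dim\Free(E)(n)=n!\,C_{n-1}$, where $C_{n-1}$ is the Catalan number. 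For $n=7$ this is $5040\cdot 132=665280$.

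Step 1: span the ideal. The arity $n$ component $(R_{\AntiPL})(n)$ is spanned by all elements obtained from a relation of $R_{\AntiPL}$ by substituting the relation at some internal vertex of a tree. Equivalently, one can build it recursively: take compositions $\mu\circ_i \nu$ with one factor in $\Free(E)$ and the other in the ideal of lower arity, together with the relations grafted into trees of arity $n-2$. We then compute the rank of this spanning set over $\mathbb{Q}$; because the relations have integer coefficients, the rank can safely be computed modulo a large prime. For $n=3$ this recovers $\dim R_{\AntiPL}=4$ and $\dim\AntiPL(3)=8$, which serves as a sanity check.

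Step 2: find the $S_n$-module structures. A character computation is enough here, and there are two ways to get it. One is to compute, for a representative permutation of each cycle type, the trace of its action on the quotient, using a basis of normal forms. The more efficient route is to apply the Young symmetrisers (or isotypic projectors) $e_\lambda$ and compute $\dim e_\lambda\AntiPL(n)$. This equals the multiplicity of $S^\lambda$, and it can be computed as the rank of $e_\lambda\Free(E)(n)$ modulo $e_\lambda(R_{\AntiPL})(n)$. This step also reduces the size of each linear algebra problem, which matters most for $n=7$.

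We can then cross-check the decompositions against the dimensions through $\sum_\lambda m_\lambda \dim S^\lambda=\dim\AntiPL(n)$. For example, in arity $4$: $4+24+8+9=45$.

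The main obstacle is size in arities $6$ and $7$, where the spanning sets for the ideal have hundreds of thousands of columns. To cope with this, we would first compute a partial operadic Gröbner basis, using shuffle trees and a path-lexicographic order, up to arity $7$. Normal forms then give $\dim\AntiPL(n)$ by counting monomials not divisible by leading terms, without any huge rank computation. Since the operad is not Koszul, the Gröbner basis will need new elements in arities $4$ to $7$, and finding these elements is where the computation concentrates. After that, the isotypic multiplicities follow from the per-$\lambda$ ranks described in Step 2, done in sparse modular arithmetic. We would then verify the results by checking that the decompositions are consistent under restriction to $S_{n-1}$ with the counts obtained independently.
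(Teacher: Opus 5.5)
Your plan is essentially the paper's proof. The paper also obtains the dimensions by counting normal forms for a low-arity reduced operadic Gröbner basis, computed with the Dotsenko--Heijltjes program. It obtains the $S_n$-multiplicities by computing the dimension of the consequences of the defining relations in the multiplicity space of each irreducible, using Clifton's representation matrices where you propose Young symmetrisers. Its independent check uses $GL_k$-characters from the \texttt{albert} program where you propose traces.

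One caution: integer coefficients alone do not make a modular rank computation ``safe''. The rank mod $p$ can only drop, so a mod-$p$ computation certifies only an upper bound on $\dim\AntiPL(n)$; compare the paper's remark that $\dim\AntiPL(5)=308$ in characteristic three. To conclude, you need either exact rational arithmetic or a matching lower bound.
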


\begin{proof}
For $n=1$ and $n=2$ the statement is trivial. 
For $n=3$, the dimension is obtained by a direct computation by hand, as noted above.
Further computations are organised as follows. First, we determine the dimensions for $4\le n\le 7$ using the computer program \cite{DH} that computes the low arity part of the reduced Gröbner basis of an operad for a given ordering, and then calculates the dimensions of components by enumerating the normal forms with respect to that Gröbner basis.

To determine the $S_n$-module structure, we use two different methods of computation that produced the same result. First, we use the strategy that efficiently implements Clifton matrices that give irreducible representations of the symmetric group, and then compute  dimensions of consequences of the defining relations of $\AntiPL$ in multiplicity spaces of irreducible representations of $S_n$ for $3\le n\le 7$. We invite the reader to consult the survey \cite{BMP} for details of this approach. As a sanity check, we compute, using the \texttt{albert} program \cite{albert}, all dimensions of multihomogeneous components of total degree at most $7$ in the free algebras with $k\le 7$ generators; this gives us the $GL_k$-characters, which we then convert to $S_n$-characters using transition from the $m$-basis to the $s$-basis of the ring of symmetric functions in \texttt{SageMath} \cite{sage}.
\end{proof}

\subsection{Absence of the sign representation}

The data of Proposition \ref{prop:dimensions} suggest that for $n\ge 3$ the sign representation does not appear in the decomposition of $\AntiPL(n)$ into irreducible $S_n$-modules. We shall now prove it, giving a method of proof which is actually sufficiently general to handle many similar cases.

\begin{proposition}
\label{prop:nosgn}
The sign representation of $S_n$ does not occur in $\AntiPL(n)$ for
$n\ge 3$; equivalently, the complete alternation
\[
\sum_{\sigma\in S_n}\sgn(\sigma)\,\sigma\cdot m
\]
of every multilinear monomial $m$ in $n\ge3$ variables vanishes in $\AntiPL$.
\end{proposition}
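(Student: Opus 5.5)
The plan is to reduce the statement, by a simple equivariance argument, to finitely many small arities, and to handle those using Proposition~\ref{prop:dimensions}.

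\emph{Factoring the alternation.} Let $\Alt_n=\sum_{\sigma\in S_n}\sgn(\sigma)\sigma$. Let $m$ be a tree monomial of arity $n\ge 3$, and suppose $m=\gamma(m';\,x_{i_1},\dots,s,\dots)$ is obtained by substituting a tree monomial $s$ of arity $k$, on a set $I$ of leaves with $|I|=k$, into a tree monomial $m'$. Choose left coset representatives $\tau$ of $S_I$ in $S_n$. Since $\sgn(\tau\rho)=\sgn(\tau)\sgn(\rho)$, we get $\Alt_n=\sum_\tau \sgn(\tau)\tau\cdot\Alt_I$. Operadic composition is equivariant with respect to permutations of the inputs of $s$, so $\Alt_I\cdot m=\gamma(m';\dots,\Alt_k(s),\dots)$. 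Hence, if $\Alt_k(s)=0$ in $\AntiPL(k)$, then $\Alt_n(m)=0$ in $\AntiPL(n)$. Since the tree monomials span $\Free(E)(n)$, it suffices to show that every monomial of large arity contains a subtree monomial $s$ of arity $k\ge3$ with $\Alt_k(s)=0$. The sign isotypic component of $\AntiPL(n)$ is the image of $\Alt_n$, so this gives the equivalence stated in the proposition.

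\emph{Induction on $n$.} Take $m$ of arity $n$. Look at the root vertex together with one of its children that is an internal vertex; such a child exists since $n\ge3$. This writes $m=\gamma(t;S_1,S_2,S_3)$, where $t$ is an arity-$3$ monomial (of type $L$ or $R$) and $S_1,S_2,S_3$ are subtrees with $n=|S_1|+|S_2|+|S_3|$. If $n\ge7$, some $S_i$ has at least $3$ leaves and arity strictly less than $n$. By the induction hypothesis $\Alt(S_i)=0$, so the factorization above gives $\Alt_n(m)=0$.

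\emph{Base cases.} It remains to treat $3\le n\le 6$, and these are read off from Proposition~\ref{prop:dimensions}: the decompositions of $\AntiPL(n)$ for $n=3,4,5,6$ (and $7$) contain no copy of $S^{1^n}$. In particular, the induction may start at $n=8$, using the table for $n\le 7$.

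\emph{Main obstacle.} The weak point is that the cases $n=4,5,6$ rest on computer calculation. If I wanted a hand proof, I would first verify $n=3$ directly. There one checks that $\Alt_3 L(1,2,3)$ is a multiple of the expansion of \eqref{eq:AP2}, and that $\Alt_3 R(1,2,3)$, after applying \eqref{eq:AP1} in its first two arguments, reduces to $\Alt_3 L$ (up to sign and scalar).

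For $n=4,5,6$, by the same factorization, only monomials all of whose $S_i$ have arity at most $2$ matter. After partial alternation, these inputs become generators $x$ or commutators $[x,y]$. I would then use \eqref{eq:AP1} in the form $x\circ(y\circ z)-y\circ(x\circ z)=[y,x]\circ z$ and \eqref{eq:AP2} to move commutators into the left argument. From there I would show that alternating an expression $[a,b]\circ c$, with $a,b,c$ arbitrary, vanishes by the cyclic identity \eqref{eq:AP2}. This is the step I expect to need the most care, namely the bookkeeping of signs and of the shapes $(ab)(cd)$, $((ab)c)(de)$, and so on.
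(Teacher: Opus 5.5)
Your argument is correct, but it gets its base cases from a different place than the paper does.

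\textbf{What matches.} The equivariant factorisation $\Alt_n=\sum_\tau\sgn(\tau)\,\tau\cdot\Alt_I$ is exactly the mechanism the paper uses. Your $n=3$ case is sound: alternating \eqref{eq:AP1} gives $2\Alt_3 L+2\Alt_3 R$, so $\Alt_3 R$ lies in the relation space. This recovers \eqref{eq:altI}, which the paper simply quotes from the literature.

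\textbf{Where you differ.} You split $m$ at the root together with one internal child, giving three subtrees. That only forces a subtree of arity $\ge 3$ once $n\ge 7$. You therefore read off $n=4,5,6$ from the computer-generated table of Proposition~\ref{prop:dimensions}.

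The paper splits only at the root, $m=m'\circ m''$. Since the arities of $m'$ and $m''$ sum to $n$, one of them lies between $3$ and $n-1$ as soon as $n\ge 5$. So the induction already runs from $n=5$. Shapes such as $((ab)c)(de)$, which you list as needing care, are killed at once by their arity-$3$ subtree.

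\textbf{The one remaining base case.} Beyond $n=3$, the only nontrivial case is the single arity-$4$ shape $(x_1\circ x_2)\circ(x_3\circ x_4)$. The paper applies \eqref{eq:AP2} with $z=x_3\circ x_4$ and alternates. The term $[x_1,x_2]\circ(x_3\circ x_4)$ contributes $2\Alt_4 m$. The other two terms expand into monomials with a factor of arity $3$, so their alternations vanish.

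This is the precise form of your heuristic about alternating $[a,b]\circ c$. That alternation does not vanish on its own; rather, \eqref{eq:AP2} trades it for terms that vanish by the previous case.

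\textbf{What each route buys.} The paper's route gives a short, computer-free proof, and the same template adapts to other binary operads. Your route is logically valid within the paper. However, it makes the statement depend on the machine computations up to arity $6$. That is somewhat backwards, since the proposition is meant to explain a pattern observed in that very table.
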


\begin{proof}
By \cite{LiuBai},
every anti-pre-Lie algebra $(A,\circ)$ satisfies
\begin{equation}\label{eq:altI}
  x\circ[y,z]+y\circ[z,x]+z\circ[x,y]=0.
\end{equation}
The identities \eqref{eq:AP2} and \eqref{eq:altI} are the complete alternations of the two binary tree
shapes in arity three, so both copies of the sign representation of
$\Free(E)(3)$ vanish in the quotient.

Consider now $n=4$. To prove the required result, it is enough to show, for every monomial $m\in \Free(E)(4)$, that the coset in $\AntiPL(4)$ of its complete alternation vanishes. This monomial is necessarily of the form $m'\circ m''$. If the arity of $m'$ or the arity of $m''$ is equal to $3$, then the coset of the complete alternation of $m$ vanishes since such alternation may be done by first performing a complete alternation of each factor $m'$ and $m''$, and we have just established that for $n=3$ there is no sign component in $\AntiPL(3)$.

It remains to treat the complete alternation of the monomial $m=(x_1\circ x_2)\circ(x_3\circ x_4)$.
Applying \eqref{eq:AP2} with
$
x=x_1,\; y=x_2,\; z=x_3\circ x_4,
$
we obtain
\[
[x_1,x_2]\circ(x_3\circ x_4)
+
[x_2,x_3\circ x_4]\circ x_1
+
[x_3\circ x_4,x_1]\circ x_2
=0.
\]
Applying $\Alt_4$ to this identity, the first term gives
\[
\Alt_4\!\left([x_1,x_2]\circ(x_3\circ x_4)\right)
=
2\,\Alt_4m,
\]
since $[x_1,x_2] = x_1\circ x_2-x_2\circ x_1$
and transposing $x_1$ and $x_2$ changes the sign of the alternation.
On the other hand, after expanding the brackets in the remaining two
terms, every resulting monomial is of the form $m'\circ m''$ with the arity of $m'$ or the arity of $m''$ equal to $3$,
and their total alternations have already been shown to
vanish. This completes the proof for $n=4$.

Finally, suppose that $n\ge 5$. In this case, we may argue directly by induction on $n$, with the basis being the cases $n=3$ and $n=4$ which we already established. A multilinear monomial $m$ is of the form $m' \circ m''$, and either $m'$ or $m''$ is of arity between $3$ and $n-1$; alternating over the variables of the corresponding factor already gives $0$ by the
induction hypothesis. This completes the proof.
\end{proof}

\subsection{Relationship to the operad of Novikov algebras}

Recall that a Novikov algebra \cite{BaN,Os} is an algebra with one binary operation satisfying the two identities
\begin{align}
  &x\star(y\star z)-y\star(x\star z)
  = [x,y]\star z, \\
  &(x\star y)\star z=(x\star z)\star y.
\end{align}
The operad $\Nov$ controlling Novikov algebras is well studied \cite{DzhumadildaevNovikov,MR3241181}. In the paper \cite{LiuBai} that introduced anti-pre-Lie algebras, an algebraic structure called \emph{admissible Novikov algebras} was introduced; these are algebras with one binary operation satisfying \eqref{eq:AP1} and
\begin{equation}
2x\circ[y,z]=(x\circ y)\circ z-(x\circ z)\circ y.
\end{equation}
It is shown in \cite{LiuBai} that these identities imply the anti-pre-Lie identities, and that the category of admissible Novikov algebras is isomorphic to the category of Novikov algebras via a change of binary operation to $a\circ b-2b\circ a$.
Note that this kind of change of operation has been used in the theory of nonassociative rings since the work of Albert \cite{AlbertPAss}; in the context of operads, it became standard since \cite{MarklRemm}.

From the point of view of the present paper, these results can be formulated as follows.

\begin{proposition}\label{prop:APLNov}
There is a surjective map of operads
 \[
\AntiPL\twoheadrightarrow \Nov
 \]
sending the binary operation $a\circ b$ to $a\star b+2b\star a$.
\end{proposition}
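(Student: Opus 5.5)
The plan is to define the map on generators, check that the images of the defining relations vanish in $\Nov(3)$, and then check surjectivity. Since $\AntiPL=\Free(E)/(R_{\AntiPL})$ with $R_{\AntiPL}$ generated by the expansions of \eqref{eq:AP1} and \eqref{eq:AP2}, the operad morphism $\Free(E)\to\Nov$ determined by $a\circ b\mapsto a\star b+2b\star a$ factors through $\AntiPL$ exactly when the images of these two identities vanish in $\Nov(3)$. Surjectivity should be immediate. The images of $x_1\circ x_2$ and $x_2\circ x_1$ are $x_1\star x_2+2x_2\star x_1$ and $x_2\star x_1+2x_1\star x_2$, and a short computation gives
\[
a\circ b-2\,b\circ a\longmapsto -3\,a\star b .
\]
Since $\bk$ has characteristic zero, the generator of $\Nov$ lies in the image. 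Because $\Nov$ is generated by its arity two component, the morphism is then surjective.

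For well-definedness, one option is to invoke \cite{LiuBai}. There, Novikov algebras are identified with admissible Novikov algebras via the change of operation $a\circ b-2b\circ a$, and admissible Novikov algebras are shown to be anti-pre-Lie. By the displayed formula, the inverse change of operation is $a\circ b=a\star b+2b\star a$ up to the nonzero scalar $-3$. Rescaling a product by a nonzero scalar preserves both the Novikov and the anti-pre-Lie identities, since these are homogeneous quadratic. Hence every Novikov algebra becomes an anti-pre-Lie algebra under our formula. Applying this to free Novikov algebras gives the operad morphism.

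I would nevertheless also verify the relations directly in $\Nov(3)$, as a self-contained check. The commutators satisfy $[x,y]_\circ=-[x,y]_\star$. For \eqref{eq:AP2}, the image is
\[
-\sum_{\mathrm{cyc}}\bigl([x,y]_\star\star z+2\,z\star[x,y]_\star\bigr).
\]
The cyclic sum of the terms $[x,y]_\star\star z$ is the complete alternation of $(x\star y)\star z$, which vanishes by right commutativity. The cyclic sum of the terms $z\star[x,y]_\star$ is the complete alternation of $z\star(x\star y)$. The left-symmetric identity rewrites this as the alternation of a left-comb monomial, so it also vanishes. For \eqref{eq:AP1}, I would expand $x\circ(y\circ z)-y\circ(x\circ z)-[y,x]_\circ\circ z$ into the twelve monomials of $\Free(E)(3)$ in $\star$ and reduce them using the two Novikov identities. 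This reduction is the one genuinely computational step, and the main (though routine) obstacle is the bookkeeping of signs and coefficients.
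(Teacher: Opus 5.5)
Your proposal is correct, and its main route is essentially the paper's. Both cite \cite{LiuBai} for the fact that admissible Novikov identities imply the anti-pre-Lie identities and for the two changes of operation, absorb the composite factor $-3$ using characteristic zero, and get surjectivity from $a\circ b-2b\circ a\mapsto -3\,a\star b$. Your supplementary direct check is also sound: the alternation argument disposes of \eqref{eq:AP2}, and the deferred expansion of \eqref{eq:AP1} does reduce to zero using right commutativity together with the left-symmetric identity.
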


\begin{proof}
It is established in \cite[Proposition~3.3]{LiuBai} that the identities of admissible Novikov algebras imply the anti-pre-Lie identities, and therefore there is a surjective map from the operad of anti-pre-Lie algebras to the operad of admissible Novikov algebras. It is furthermore established \cite[Propositions~3.5, 3.6]{LiuBai} that for an admissible Novikov algebra with the product $a,b\mapsto a\circ b$, the operation $a\star b:=a\circ b-2b\circ a$ defines a Novikov algebra, and conversely, for a Novikov algebra with the product $a,b\mapsto a\star b$, the operation $a\circ b:=a\star b+2b\star a$ defines an admissible Novikov algebra.
In fact, these two procedures are not literally inverse to one another, but their composition multiplies the generating operation by $-3$, which is harmless since we work over a field of zero characteristic.
This shows that the notion of admissible Novikov algebras corresponds to a different choice of the generator of the operad of Novikov algebras, and that on the level of operads we have a (clearly surjective) map of operads implementing all the stated results.
\end{proof}

\subsection{The cyclic structure of the anti-pre-Lie operad}

Recall \cite{MR1358617} that an operad $\calP$ is said to be cyclic if for each $n\ge 1$, the action of $S_n$ on $\calP(n)$ may be extended to an action of $S_{n+1}$ in such a way that the long cycles $\tau_{n+1}=(1,\ldots,n,(n+1))\in S_{n+1}$ satisfy the property
 \[
\tau_{m+n}(\alpha\circ_i\beta)=
\begin{cases}
\,\, \tau_{m+1}(\alpha)\circ_{i+1}\beta, \,\quad 1\le i< m,\\
\tau_{n+1}(\beta)\circ_1\tau_{m+1}(\alpha), \quad i=m
\end{cases}
 \]
for all $\alpha\in\calP(m)$, $\beta\in\calP(n)$, and $\tau_2(\id)=\id$.
In particular, for an operad generated by binary operations, the extended action of $S_3$ on the space of generators fully defines the cyclic structure.

One can also think of cyclic operads as of algebras for the monad of non-rooted trees, and via invariant symmetric bilinear forms. The latter approach, which will be particularly useful for us, can be formulated as follows. For a cyclic operad $\calP$ as above, we say that a symmetric bilinear form $B(-,-)$ on a $\calP$-algebra $A$ is \emph{invariant} if
 \[
 B(\alpha(a_1,\ldots,a_n),a_{n+1})=B(\tau_{n+1}(\alpha)(a_2,\ldots,a_{n+1}),a_{1})
 \]
for all $\alpha\in\calP(n)$ and all $a_1,\ldots,a_{n+1}\in A$. To convert this into an operadic definition, we let $\calX$ be a sequence of representations of symmetric groups for which $\calX(n)=0$ for $n\ne 2$ and $\calX(2)$ is the trivial $S_2$-module; we shall fix an element $B(-,-)$ spanning $\calX(2)$. An equivalent definition of a cyclic operad says that it is given by an extension of the action of $S_n$ on $\calP(n)$ to an action of $S_{n+1}$ for which the quotient $Q$ of the free right $\calP$-module $\calX\circ\calP$ by the right $\calP$-submodule generated by the relations
\begin{equation}\label{eq:cycinv}
 B(\alpha(x_1,\ldots,x_n),x_{n+1})=B(\tau_{n+1}(\alpha)(x_2,\ldots,x_{n+1}),x_1), \quad \alpha\in\calP(n),
\end{equation}
has the property that the map from $\calP(n)$ to $Q(n+1)$ given by the formula
 \[
\alpha(x_1,\ldots,x_n)\mapsto [B(\alpha(x_1,\ldots,x_n),x_{n+1})]
 \]
is an isomorphism of vector spaces. In practice, this means that Equation \eqref{eq:cycinv} is compatible with the relations of the operad $\calP$: substituting one of the defining relations of $\calP$ in $B(-,-)$ and using \eqref{eq:cycinv} does not produce elements of the form $B(r(x_1,\ldots,x_n),x_{n+1})$ where $r$ is not a consequence of the defining relations.

\begin{proposition}\label{prop:cyclic}
The operads $\AntiPL$ and $\Nov$ are cyclic, and the morphism of operads
 \[
\AntiPL\to\Nov
 \]
is compatible with cyclic structures.
\end{proposition}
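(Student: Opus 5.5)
The plan is to construct the cyclic structures directly on generators and then check compatibility with the relations in arity three. Both operads are generated by a single binary operation with no symmetry, so $E(2)\cong S^{2}\oplus S^{1^2}$ as an $S_2$-module. The only two-dimensional $S_3$-module restricting to this is $S^{2,1}$, so the extended action of $S_3$ on $E(2)$ has to be the standard representation.

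In terms of an invariant symmetric form $B$, I would take the cyclic structure on $\AntiPL$ to be specified by the two rules
\[
B(x\circ y,z)+B(y\circ z,x)+B(z\circ x,y)=0,
\]
which says that $\tau_3$ has no fixed vectors, together with a transposition rule relating $B(x\circ y,z)$ to $B(y\circ x,z)$ and $B(x\circ z,y)$. The precise transposition rule should be the one for which the Lie bracket becomes invariant. I expect this to match the known description of anti-pre-Lie algebras via Lie algebras with a commutative $2$-cocycle, in the form $B(x\circ y,z)=B(y,[x,z])$ up to sign conventions. Before going further, I will check that these rules do define an $S_3$-action on $E(2)$, i.e. that the induced action on the basis $x_1x_2$, $x_2x_1$ is the standard representation.

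For the cyclic property on $\AntiPL$, since the operad is binary quadratic, it suffices to show that $R_{\AntiPL}\subseteq\Free(E)(3)$ is stable under the $S_4$-action on $\Free(E)(3)$ induced by the cyclic structure of the free operad. As $S_3$-stability is given, only $\tau_4$ remains. Concretely, I will write $B(r(x_1,x_2,x_3),x_4)$ for the two generating relations, \eqref{eq:AP1} and \eqref{eq:AP2}. I then move $x_1$ to the outer slot using the invariance rules on the outer and inner operations, and verify that the result is a combination of $B(r'(x_2,x_3,x_4),x_1)$ with $r'\in R_{\AntiPL}$. Since $\dim R_{\AntiPL}=4$, this reduces to a finite linear algebra check. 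One can first observe that, by Proposition~\ref{prop:dimensions}, $R_{\AntiPL}\cong S^{1^3}\oplus S^{2,1}$ as an $S_3$-module, since $\Free(E)(3)\cong 2S^3\oplus 4S^{2,1}\oplus 2S^{1^3}$ and $\AntiPL(3)\cong 2S^3\oplus 3S^{2,1}$. Its $S_4$-saturation should therefore be a $4$-dimensional $S_4$-module restricting to this, such as $S^{3,1}$. The identity \eqref{eq:altI} from \cite{LiuBai} will be useful here: it is precisely the other alternated shape, obtained by moving the root in \eqref{eq:AP2}.

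For $\Nov$ and compatibility, I transport the structure along the isomorphism of generator spaces $a\circ b\mapsto a\star b+2b\star a$ from Proposition~\ref{prop:APLNov}. That is, I give $\Nov(2)$ the unique $S_3$-action making this linear isomorphism $S_3$-equivariant. With this choice, compatibility of the morphism with cyclic structures is automatic once $\Nov$ is shown to be cyclic: a morphism of binary quadratic operads is compatible as soon as it is equivariant on generators. It then remains to check that the relation space of $\Nov$, namely the image of the relations of admissible Novikov algebras, is $\tau_4$-stable. Equivalently, I check that the kernel $K$ of $\Free(E)(3)\to\Nov(3)$, which contains $R_{\AntiPL}$ and has dimension $12-6=6$, is $S_4$-stable. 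Since $R_{\AntiPL}$ is already known to be stable, this needs only the additional admissible Novikov relation: I apply $\tau_4$ to it and reduce modulo $R_{\AntiPL}$.

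The main obstacle I expect is the explicit $\tau_4$ computations. Each relation expands into up to twelve tree monomials, and signs under the $S^{2,1}$ action are easy to get wrong. I would organize the check by writing $\tau_4$ as a $12\times12$ matrix on the basis $L(\sigma),R(\sigma)$ and verifying rank conditions: the rank of $R_{\AntiPL}+\tau_4R_{\AntiPL}$ should be $4$, and that of $K+\tau_4K$ should be $6$. If the first guess for the transposition rule fails, I would instead solve for the $S^{2,1}$-structure, a one-parameter choice up to scaling, that makes the rank condition hold.
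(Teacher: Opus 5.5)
Your plan is correct in outline. For $\AntiPL$ it matches the paper's argument, and for $\Nov$ and compatibility it takes a different route.

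\textbf{The $\AntiPL$ half.} The rule you guess, $B(x\circ y,z)=B(y,[x,z])$, is exactly \eqref{eq:APL-inv} from \cite[Definition~2.22]{LiuBai}. The paper proves cyclicity by showing that moving any input of $\rho_1$ or $\rho_2$ to the output yields a consequence of the relations; this is your $\tau_4$-stability check. However, that check is the entire content of this half, and you only defer it. It does go through:
\begin{itemize}
\item Using \eqref{eq:B-consequence}, one gets $B(\rho_1(x,y,z),t)=B(x,\rho_1(y,t,z))$.
\item Moving the third input of $\rho_1$ out gives $B\bigl(z,[y,[x,t]]-[x,[y,t]]+[[x,y],t]\bigr)$, which vanishes by the Jacobi identity.
\item For the second relation, $B(\rho_2(x,y,z),t)=B\bigl(x,\rho_1(y,t,z)-\rho_1(z,t,y)+\rho_2(y,z,t)\bigr)$.
\end{itemize}
In particular, re-rooting \eqref{eq:AP2} does not produce \eqref{eq:altI}, contrary to your remark; the re-rooted element is not even alternating.

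\textbf{The $\Nov$ half.} Here your route is genuinely different. Transporting the $S_3$-action along $\mu\mapsto\nu+2\nu^{\mathrm{op}}$ makes compatibility automatic. It reduces everything to $\tau_4K\subseteq K$ for the $6$-dimensional kernel $K\supseteq R_{\AntiPL}$. This needs $\tau_4$ applied to all $S_3$-translates of the admissible Novikov relation, with images in $K$ rather than in $R_{\AntiPL}$; your rank formulation encodes this correctly.

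The paper instead realises $\Nov$ as the suboperad generated by $\mu\circ_2 D$ inside the cyclic operad of commutative algebras with a derivation $D$, on which $\tau_2$ acts by $-1$. This gives cyclicity and the explicit rule \eqref{eq:Nov-inv} conceptually. The price is that compatibility must be imported from \cite[Proposition~3.11]{LiuBai}.

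The two structures agree. Transporting \eqref{eq:APL-inv} along $x\circ y=x\star y+2y\star x$ gives exactly \eqref{eq:Nov-inv}, together with its consequence $B(y\star x,z)=B(y,z\star x)$. So the paper's argument guarantees that your check on $K$ succeeds, while your route avoids the external compatibility result.

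\textbf{Incorrect side claims.} Two of your representation-theoretic claims are wrong, although neither is used in the logic.
\begin{itemize}
\item $S^{3}\oplus S^{1^3}$ also restricts to $S^2\oplus S^{1^2}$, so $S^{2,1}$ is a choice, not forced. That other extension is the Getzler--Kapranov ``cyclic quadratic'' case discussed in the remark after the proposition. A fallback search among $S^{2,1}$-structures alone would therefore not be exhaustive.
\item Your own subtraction gives $R_{\AntiPL}\cong S^{2,1}\oplus 2S^{1^3}$. This is $4$-dimensional and contains both alternations \eqref{eq:AP2} and \eqref{eq:altI}; your $S^{1^3}\oplus S^{2,1}$ is only $3$-dimensional. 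Accordingly, the $S_4$-module $R_{\AntiPL}$ is $S^{2,1^2}\oplus S^{1^4}$. The module $S^{3,1}$ is impossible, since its restriction contains $S^3$.
\end{itemize}
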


\begin{proof}
To prove the cyclicity of the operad $\AntiPL$, we recall that in \cite[Definition~2.22]{LiuBai}, the notion of an invariant symmetric bilinear form on an anti-pre-Lie algebra is introduced via the identity
\begin{equation}\label{eq:APL-inv}
B(x\circ y,z)=B(y,x\circ z-z\circ x).
\end{equation}
On the binary generating space, this condition defines the action
$\tau_3(\mu)=-\mu^{\mathrm{op}}$ and
$\tau_3(\mu^{\mathrm{op}})=\mu-\mu^{\mathrm{op}}$, where
$\mu(x,y)=x\circ y$ and
$\mu^{\mathrm{op}}(x,y)=\mu(y,x)$. Together with the transposition
interchanging $\mu$ and $\mu^{\mathrm{op}}$, these maps satisfy the
relations of $S_3$. We verify compatibility with the defining relations.
The calculation also uses
\begin{equation}\label{eq:B-consequence}
B(x\circ y,z)=-B(x,z\circ y)
\end{equation}
which follows from \eqref{eq:APL-inv}. Write
\begin{align*}
\rho_1(x,y,z)&=x\circ(y\circ z)-y\circ(x\circ z)+[x,y]\circ z,\\
\rho_2(x,y,z)&=[x,y]\circ z+[y,z]\circ x+[z,x]\circ y.
\end{align*}
To isolate the argument $x$ in $B(x\circ(y\circ z)-y\circ(x\circ z)+[x,y]\circ z,t)$, we write
\begin{gather*}
B(x\circ(y\circ z),t)=-B(x,t\circ(y\circ z)),\\
B(y\circ(x\circ z),t)=B(x\circ z,[y,t])=-B(x,[y,t]\circ z),\\
B((x\circ y)\circ z,t)=-B(x\circ y,t\circ z)=B(x,(t\circ z)\circ y),\\
B((y\circ x)\circ z,t)=-B(y\circ x,t\circ z)=-B(x,[y,t\circ z]),
\end{gather*}
implying that
\begin{multline*}
B(x\circ(y\circ z)-y\circ(x\circ z)+[x,y]\circ z,t)=\\
-B(x,t\circ(y\circ z))+B(x,[y,t]\circ z)+B(x,(t\circ z)\circ y)+B(x,[y,t\circ z])=\\
-B(x,t\circ(y\circ z))+B(x,[y,t]\circ z)+B(x,y\circ(t\circ z)).
\end{multline*}
Thus
\[
B(\rho_1(x,y,z),t)=B(x,\rho_1(y,t,z)).
\]
Since $\rho_1$ is skew-symmetric in its first two inputs, this also
handles its second input. Isolating its third input gives
\begin{align*}
B(\rho_1(x,y,z),t)
 &=B\bigl(z,[y,[x,t]]-[x,[y,t]]+[[x,y],t]\bigr)=0,
\end{align*}
by the Jacobi identity, which follows from \eqref{eq:AP1} and
\eqref{eq:AP2}. For the second defining relation, the same invariance
formulas give
\begin{align*}
B(\rho_2(x,y,z),t)
 &=B\bigl(x,y\circ(t\circ z)-z\circ(t\circ y)+[[y,z],t]\bigr)\\
 &=B\bigl(x,\rho_1(y,t,z)-\rho_1(z,t,y)+\rho_2(y,z,t)\bigr)\\
 &=0.
\end{align*}
Its other inputs are handled by the alternation of $\rho_2$.
Consequently, moving any input of either defining relation to the output
produces a consequence of the defining relations. This proves cyclicity
of $\AntiPL$.

To prove the cyclicity of the operad $\Nov$, we recall \cite[Definition~3.7]{MR4610291} that a symmetric bilinear form on a Novikov algebra is said to be invariant if
\begin{equation}\label{eq:Nov-inv}
B(x\star y,z)=-B(y,x\star z+z\star x).
\end{equation}
In principle, one may also verify compatibility with the identities of Novikov algebras by a direct computation, but in this case we managed to find a conceptual interpretation of the cyclicity. It is well known \cite{MR1918188} that the Novikov operad is isomorphic to the suboperad generated by the operation $a\star b:=\mu(a,D(b))$ of the operad of commutative algebras equipped with a derivation $D$. This latter operad is cyclic, with the action of $\tau_2$ on $D$ being the sign, and the action of $\tau_3$ on the commutative product $\mu$ being the trivial action. Moreover, we have
\begin{gather*}
\tau_3(\mu\circ_1 D)=\mu\circ_2 D,\\
\tau_3(\mu\circ_2 D)=\tau_2(D)\circ_1(\tau_3(\mu))=-D\circ_1\mu=-\mu\circ_1D-\mu\circ_2D,
\end{gather*}
showing at once that the suboperad generated by $\mu\circ_2 D$ is a cyclic suboperad, and that the cyclic action translates into the invariance condition \eqref{eq:Nov-inv}.

Furthermore, \cite[Proposition~3.11]{LiuBai} proves that a symmetric bilinear form on an admissible Novikov algebra $A$ that is invariant (if we view $A$ as an anti-pre-Lie algebra) leads, under the isomorphism between the admissible Novikov operad and the Novikov operad, to Condition \eqref{eq:Nov-inv}. This translates to the property of the morphism $\AntiPL\to\Nov$ being compatible with the cyclic structures.
\end{proof}

\begin{remark}
At a first glance, this proposition seems to contradict \cite[Proposition~4.3]{MR1358617} which asserts that for a cyclic quadratic operad $\calP$, the invariance condition for a symmetric bilinear form reads
 \[
B(\alpha(x_1,x_2),x_3)=B(x_1,\alpha(x_2,x_3)).
 \]
This corresponds to an important terminological confusion in the literature on cyclic operads. The foundational paper \cite{MR1358617} of Getzler and Kapranov discusses under the name ``cyclic quadratic operads'' the class of operads $\calP$ generated by binary operations for which the extension of the action of $S_2$ on $\calP(2)$ to an action of $S_3$ extends $S^2$ to $S^3$ and $S^{1^2}$ to $S^{1^3}$. However other extensions may be possible for a cyclic operad; for instance, for an operad generated by the regular representation of $S_2$, we can extend the action of $S_2$ on $\bk S_2\cong S^2\oplus S^{1^2}$ to $S^{2,1}$. Thus, an operad may be cyclic and quadratic but not ``cyclic quadratic'', as noted already in \cite[Remark~15]{MarklRemm}. In particular, for the operad $\AntiPL$, the action of $S_3$ on $\AntiPL(2)$ is precisely via the module $S^{2,1}$.
\end{remark}

\subsection{Lie elements in the anti-pre-Lie operad}
\label{sec:lie-structure}

For every anti-pre-Lie algebra, the commutator $[x, y] = x \circ y - y \circ x$ is a Lie bracket. In other words, there is a map of operads
 \[
\kappa\colon \Lie\to\AntiPL
 \]
sending the Lie bracket to $\mu - \mu \cdot (12)$. Let us give an easy argument explaining why this map is not injective; this preliminary argument will shortly be sharpened as a consequence of a more intricate analysis.

\begin{proposition}\label{prop:notinj}
The morphism
 \[
\kappa\colon \Lie\to\AntiPL
 \]
is not injective in arity $6$: its kernel contains the element
 \[
\sum_{\sigma \in S_{5}} \sgn(\sigma)
  \bigl[x_{\sigma(1)}, [x_{\sigma(2)}, [x_{\sigma(3)},
  [x_{\sigma(4)}, [x_{\sigma(5)},x_6]]]]\bigr].
 \]
\end{proposition}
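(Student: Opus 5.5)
The plan is to reduce the statement to Proposition~\ref{prop:nosgn}. Put $y_i=x_i$ for $1\le i\le 5$ and consider the element
\[
\Phi=\sum_{\sigma \in S_{5}} \sgn(\sigma)\bigl[x_{\sigma(1)}, [x_{\sigma(2)}, [x_{\sigma(3)},[x_{\sigma(4)}, [x_{\sigma(5)},x_6]]]]\bigr].
\]
Write $\ad_a=[a,-]$, so that $\Phi$ is the complete alternation in $x_1,\dots,x_5$ of $\ad_{x_1}\cdots\ad_{x_5}(x_6)$. Its image under $\kappa$ will be expanded in $\AntiPL(6)$. The key observation is that in an anti-pre-Lie algebra we can write $[a,b]=a\circ b-b\circ a$. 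Expanding each bracket, $\kappa(\Phi)$ becomes a signed sum of monomials. In each monomial, $x_6$ sits in some position, and the variables $x_1,\dots,x_5$ enter in a fixed tree shape, with the alternation running over all of them.

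Step one is to separate the monomials by the position of the outer variable. In the term coming from $x_{\sigma(1)}\circ(\cdots)$, the variable $x_{\sigma(1)}$ is a single leaf attached to the root, and the second factor $m''$ has arity $5$ and involves $x_6$ together with four alternated variables. In the term $(\cdots)\circ x_{\sigma(1)}$, the first factor contains $x_6$ and four alternated variables. In both cases, the factor not containing only $x_{\sigma(1)}$ contains at least three alternated variables, although it also contains $x_6$. So Proposition~\ref{prop:nosgn} cannot be applied naively to that factor, because that factor is not fully alternated.

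Step two handles this by a different decomposition. In every monomial of the expansion, we look at the innermost bracket $[x_{\sigma(5)},x_6]$ and work outward. In the monomial tree, consider the smallest subtree that contains $x_6$ and at least three alternated variables. I expect the decisive point to be finding, in each monomial, a subtree of arity at least $3$ consisting only of alternated variables. The structure of iterated $\ad$, however, always nests $x_6$ at the bottom, so such a subtree need not exist. This is the main obstacle.

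I would therefore try a cleaner route: use the theory of Lie algebras. The operator $\sum_\sigma \sgn(\sigma)\ad_{x_{\sigma(1)}}\cdots\ad_{x_{\sigma(5)}}$ equals the image in $\mathrm{End}$ of the standard polynomial $s_5(\ad_{x_1},\dots,\ad_{x_5})$. Then I would invoke the anti-pre-Lie representation property stated in the introduction: the maps $-L_\circ$ give a representation of the commutator Lie algebra, so $\ad_{x}$ acting on $x_6$ can be traded partly for $L_{x}$. Concretely, $[x,u]=x\circ u-u\circ x$, and I would aim to show that the right multiplications $u\mapsto u\circ x$ contribute monomials of the form $m'\circ x_{\sigma(i)}$ whose left factor, after repeated use of \eqref{eq:AP1}, can be rewritten so that alternated variables cluster. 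Meanwhile, $s_5(L_{x_1},\dots,L_{x_5})(x_6)$ equals $s_5(-\rho(x_1),\dots)$ for a Lie representation $\rho$. Since $L$ is, up to sign, a Lie homomorphism, the alternated product $\sum\sgn(\sigma)L_{x_{\sigma(1)}}\cdots L_{x_{\sigma(5)}}$ can be rewritten, by pairing adjacent factors, as $2^{-2}$ times sums of products of $L_{[x_i,x_j]}$. Each such bracket $[x_i,x_j]$ lies inside a subproduct which, when alternated together with a third variable, yields the alternation of an arity-$3$ monomial. That vanishes by Proposition~\ref{prop:nosgn}. Pushing this clustering argument through for the mixed left/right terms is where the real work lies. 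If it resists, the fallback is a direct computer check in $\AntiPL(6)$, using the Gröbner basis from Proposition~\ref{prop:dimensions}.
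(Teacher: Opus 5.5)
Your proposal does not actually establish $\kappa(\Phi)=0$. You correctly see that clustering via Proposition~\ref{prop:nosgn} breaks down because $x_6$ sits at the bottom of every nested commutator. Your replacement route has the same defect. After pairing, the pure-$L$ part becomes $\tfrac14\sum_{\sigma\in S_5}\sgn(\sigma)\,[x_{\sigma(1)},x_{\sigma(2)}]\circ\bigl([x_{\sigma(3)},x_{\sigma(4)}]\circ(x_{\sigma(5)}\circ x_6)\bigr)$. No monomial there has a subtree containing three or more of $x_1,\dots,x_5$ but not $x_6$, so Proposition~\ref{prop:nosgn} says nothing about it. The remaining $2^5-1$ words in $L$ and $R$ are not treated, and the computer fallback is not carried out.

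In fact no refinement of this strategy can work as long as the only denominators are powers of $2$. Proposition~\ref{prop:nosgn} (whose proof only divides by $2$) and all your rewriting steps remain valid over a field of characteristic $3$. There $\kappa$ is injective by Remark~\ref{rem:char3}, while $\Phi\neq0$, because its coefficients in the basis $\ad_{x_{\pi(1)}}\cdots\ad_{x_{\pi(5)}}(x_6)$, $\pi\in S_5$, of $\Lie(6)$ are $\pm1$. So some step must invert $3$, as the final step $3B=0$ does in Appendix~\ref{app:standard}. That same basis observation is also what you need in characteristic zero to conclude non-injectivity; your proposal never checks $\Phi\neq0$.

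The missing idea is equivariance. Since $\Phi$ is alternating in $x_1,\dots,x_5$, the $S_6$-module it generates is a quotient of $\operatorname{Ind}_{S_5}^{S_6}\sgn\cong S^{1^6}\oplus S^{2,1^4}$; in fact it is the unique copy of $S^{2,1^4}$ in $\Lie(6)$. The map $\kappa$ is $S_6$-equivariant. By Proposition~\ref{prop:dimensions}, $\AntiPL(6)$ contains no copy of $S^{2,1^4}$ (nor of $S^{1^6}$), so $\kappa(\Phi)=0$ by Schur's lemma. This is the paper's argument; characteristic zero enters through semisimplicity and the computed decomposition of $\AntiPL(6)$.

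Alternatively, once the degree-five standard identity of Theorem~\ref{thm:lie-kernel} is available, group the terms of $\Phi$ by $\sigma(1)$. This gives $\Phi=\sum_{i=1}^{5}(-1)^{i-1}\bigl[x_i,\mathrm{st}_5(x_1,\ldots,\widehat{x_i},\ldots,x_5,x_6)\bigr]$, which lies in the operadic ideal generated by $\mathrm{st}_5$.
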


\begin{proof}
Recall \cite[\S 4.8.3]{MR4368864} that the $S_n$-module
structures of $\Lie(n)$ for small $n$ are given as follows:
\begin{center}
	\begin{tabular}{|c|c|}
		\hline
		$n$ & $\Lie(n)$ \\
		\hline
		$1$ & $S^{1}$ \\
		\hline
		$2$ & $S^{1^2}$ \\
		\hline
		$3$ & $S^{2,1}$ \\
		\hline
		$4$ & $S^{3,1}\oplus S^{2,1^2}$ \\
		\hline
		$5$ & $S^{4,1}\oplus S^{3,2}\oplus S^{3,1^2}
		\oplus S^{2^2,1}\oplus S^{2,1^3}$ \\
		\hline
		$6$ & $S^{5,1}\oplus S^{4,2}\oplus 2S^{4,1^2}
		\oplus S^{3^2}\oplus 3S^{3,2,1}
		\oplus S^{3,1^3}\oplus 2S^{2^2,1^2}
		\oplus S^{2,1^4}$ \\
		\hline
	\end{tabular}
\end{center}
Comparing these decompositions with those of Proposition \ref{prop:dimensions}, we note that $\Lie(6)$ contains one copy of the irreducible module $S^{2,1^4}$, while $\AntiPL(6)$ does not contain any copies of this module. It is known \cite{MR4368864} that for every $n\geq3$, the $S_n$-module $\Lie(n)$ contains only one copy of $S^{2,1^{n-2}}$, and that copy is generated by the so-called standard identity
 \[
\sum_{\sigma\in S_{n-1}}\mathrm{sgn}(\sigma)\ad_{x_{\sigma(1)}}\ad_{x_{\sigma(2)}}\cdots \ad_{x_{\sigma(n-1)}}(x_n) .
 \]
This completes the proof.
\end{proof}

Since under the composite morphism
 $
\Lie\to\AntiPL\to\Nov
 $
the Lie bracket is sent to
 \[
a\star b+2b\star a-(b\star a+2a\star b)=-(a\star b-b\star a),
 \]
our result shows that the map of operads $\Lie\to\Nov$ is not injective. This is well known, and in fact, a similar argument allows for a stronger result.

\begin{proposition}\label{prop:notinjNov}
The morphism
 \[
\kappa_{\Nov}\colon \Lie\to\Nov
 \]
sending the Lie bracket to $a\star b-b\star a$ is not injective
in arity $5$: its kernel contains the element
 \[
\sum_{\sigma \in S_{4}} \sgn(\sigma)
  \bigl[x_{\sigma(1)}, [x_{\sigma(2)}, [x_{\sigma(3)},
  [x_{\sigma(4)},x_5]]]\bigr].
 \]
\end{proposition}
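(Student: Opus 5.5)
The plan mirrors the proof of Proposition~\ref{prop:notinj}: compare isotypic components. The standard-identity element in $\Lie(5)$ generates the unique copy of $S^{2,1^3}$ in $\Lie(5)$, as recalled there. It therefore suffices to show that $\Nov(5)$ contains no copy of $S^{2,1^3}$. Since $\kappa_{\Nov}$ is $S_5$-equivariant, the image of that copy must then be zero.

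To determine the $S_5$-module structure of $\Nov(5)$, I would use the known description of $\Nov$ recalled in the proof of Proposition~\ref{prop:cyclic}. It is the suboperad of the operad of commutative algebras with a derivation $D$, generated by $a\star b=aD(b)$.
\begin{itemize}
\item By Dzhumadildaev–Löfwall, a basis of $\Nov(n)$ consists of the differential monomials $D^{k_1}(x_1)\cdots D^{k_n}(x_n)$ with $\sum k_i=n-1$. Hence $\dim\Nov(n)=\binom{2n-2}{n-1}$.
\item As an $S_n$-module, $\Nov(n)$ is the permutation-type module on such exponent vectors. It decomposes as a direct sum of modules induced from Young subgroups $S_{m_0}\times S_{m_1}\times\cdots$ with trivial coefficients, where $m_j$ is the number of variables carrying exactly $j$ derivatives.
\end{itemize}

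By Frobenius reciprocity, the sign-like module $S^{2,1^3}$ occurs in $\operatorname{Ind}_{S_{m_0}\times S_{m_1}\times\cdots}^{S_5}\mathbf 1$ only if the Kostka number $K_{(2,1^3),\mu}$ is nonzero, where $\mu$ is the sorted partition $(m_j)$. This requires $\mu$ to be dominated by $(2,1^3)$, that is, $\mu=(2,1^3)$ or $\mu=(1^5)$.

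Now fix $n=5$, so the exponents satisfy $\sum k_i=4$ over five variables.
\begin{itemize}
\item The case $\mu=(1^5)$ needs five distinct exponents $k_i$, whose sum is at least $0+1+2+3+4=10>4$. This is impossible.
\item The case $\mu=(2,1^3)$ needs four distinct exponent values, one of them repeated. The minimal sum is $0+0+1+2+3=6>4$. This is also impossible.
\end{itemize}
So $S^{2,1^3}$ does not occur in $\Nov(5)$, and the standard identity lies in the kernel. This is the ``much simpler reason'' mentioned in the introduction.

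The only delicate step is the first one: justifying the monomial basis and the resulting permutation-module description of $\Nov(n)$. Its key point is the injectivity of $\Nov$ into the differential commutative operad, i.e.\ that the map is an isomorphism onto the span of monomials of weight $n-1$. I would cite this from \cite{MR1918188,DzhumadildaevNovikov} rather than reprove it.

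As a fallback, one could compute $\Nov(5)$ directly, as in Proposition~\ref{prop:dimensions}. Alternatively, one can verify the identity by hand. Under $\Lie\to\Nov$, the bracket becomes $aD(b)-bD(a)$, and each term of the alternated nested bracket is a product of five factors with total derivative count $4$. Alternating over $x_1,\dots,x_4$ kills every such monomial, because at most four distinct exponents can occur and two alternated variables must share the same number of derivatives.
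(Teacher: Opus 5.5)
Your proof is correct, but you reach the key fact by a different route from the paper. The paper quotes the full decomposition $\Nov(5)\cong 5S^{5}\oplus 7S^{4,1}\oplus 4S^{3,2}\oplus 2S^{3,1^2}\oplus S^{2^2,1}$ from \cite{MR3241181} and observes that $S^{2,1^3}$ is missing. You instead derive that absence from the realization of $\Nov$ inside commutative algebras with a derivation, which the paper itself recalls in the proof of Proposition~\ref{prop:cyclic}, combined with Young's rule. Your orbit analysis reproduces exactly the decomposition the paper cites. The exponent multisets are $\{4,0,0,0,0\}$, $\{3,1,0,0,0\}$, $\{2,2,0,0,0\}$, $\{2,1,1,0,0\}$ and $\{1,1,1,1,0\}$. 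They give permutation modules of types $(4,1)$, $(3,1^2)$, $(3,2)$, $(2^2,1)$ and $(4,1)$.

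Your route has two advantages. First, it needs only injectivity of $\Nov$ into the differential commutative operad, not the identification of its image. A submodule of a permutation module that lacks $S^{2,1^3}$ lacks it too. Second, it generalizes at once. A copy of $S^{2,1^{n-2}}$ would need $n-1$ distinct exponents with sum at least $(n-1)(n-2)/2$, and this exceeds $n-1$ for $n\geq 5$. So the standard identity of every degree $n\ge 5$ lies in $\ker\kappa_{\Nov}$. For $n=4$, the exponents $0,0,1,2$ show exactly why the argument must fail there. The paper's route is a one-line citation but rests on a full character computation.

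One small slip in your fallback hand verification: "at most four distinct exponents can occur" does not by itself force two of $x_1,\dots,x_4$ to share an exponent. The correct reason is that $x_1,\dots,x_4$ cannot carry four distinct exponents, since $0+1+2+3=6>4$. A transposition of two alternated variables with equal exponents then fixes the differential monomial, so its alternation vanishes. As you note, concluding that the element vanishes in $\Nov(5)$, and not merely in its image, again requires the injectivity from \cite{MR1918188}.
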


\begin{proof}
Indeed, for the Novikov operad, we have an $S_5$-module isomorphism
 \[
\Nov(5)\cong 5S^{5}\oplus 7S^{4,1}\oplus 4S^{3,2}\oplus 2S^{3,1^2}\oplus S^{2^2,1},
 \]
see \cite{MR3241181}, and in particular, the arity five component does not contain the module $S^{2,1^3}$ which is present in $\Lie(5)$. This completes the proof.
\end{proof}

Of course, this result is essentially classical: it is known that the Lie algebra $W_1$ of vector fields on the line satisfies the standard identity of degree five \cite{MR4368864}, and its Lie bracket is obtained from the Novikov product
 \[
f(x)\partial\star g(x)\partial = f(x)g'(x)\partial ,
 \]
which does not satisfy any additional Novikov identities
\cite{MR4549103,MR2884985}, meaning that the
standard identity of degree five holds for the Lie bracket on any
Novikov algebra.

Our study of Lie elements in the anti-pre-Lie operad led us to the following result.

\begin{theorem}\label{thm:lie-kernel}
The morphism $\kappa\colon \Lie\longrightarrow\AntiPL$ is injective in arities $n\leq4$. In arity $5$, the kernel of the morphism $\kappa$ is generated by the standard identity of degree $5$.
\end{theorem}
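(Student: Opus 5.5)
The plan is to work one isotypic component at a time. Since $\kappa$ is a morphism of $S_n$-modules, its kernel in arity $n$ is a submodule of $\Lie(n)$, and it is enough to understand $\kappa$ on each isotypic component of $\Lie(n)$ separately. In arities $n\le 3$ this is immediate. The module $\Lie(2)=S^{1^2}$ is spanned by the bracket, and it maps to the nonzero element $\mu-\mu\cdot(12)$. The module $\Lie(3)=S^{2,1}$ is irreducible, so its image is either zero or isomorphic to it. The image is nonzero because $[[x_1,x_2],x_3]$ expands to a combination of twelve monomials of $\Free(E)(3)$ which does not lie in the four-dimensional space $R_{\AntiPL}$. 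We check this directly, for instance by comparing coefficients of $R$-type monomials with the spanning set of $R_{\AntiPL}$.

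Arity $4$ needs a genuine argument, since $\Lie(4)=S^{3,1}\oplus S^{2,1^2}$ and each summand could in principle be killed. Both summands occur with multiplicity one in $\Lie(4)$, so it suffices to exhibit, for each of them, one explicit element of $\Lie(4)$ lying in that isotypic component whose image in $\AntiPL(4)$ is nonzero. For $S^{2,1^2}$ we take the standard polynomial $\sum_{\sigma\in S_3}\sgn(\sigma)[x_{\sigma(1)},[x_{\sigma(2)},[x_{\sigma(3)},x_4]]]$. For $S^{3,1}$ we take a suitable symmetrization, for example applying a Young symmetrizer to $[[[x_1,x_2],x_3],x_4]$. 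To certify nonvanishing, we reduce the images to normal form with respect to the Gröbner basis used in Proposition~\ref{prop:dimensions}, or alternatively we evaluate them in an explicit anti-pre-Lie algebra. A convenient source of such algebras is Proposition~\ref{prop:APLNov}: an element is nonzero in $\AntiPL$ as soon as its image in $\Nov$ is nonzero. Since $\Lie\to\AntiPL\to\Nov$ sends the bracket to minus the Novikov commutator, nonvanishing of a Lie polynomial in $W_1$ (equivalently, in $\Nov$) implies nonvanishing in $\AntiPL$. In $W_1$ there are no Lie identities of degree $\le 4$, so $\kappa_{\Nov}$ is injective in arity $4$, and therefore so is $\kappa$. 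The same argument handles arities $2$ and $3$ uniformly.

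In arity $5$ the same reasoning applied to $W_1$ shows that the kernel of $\kappa_{\Nov}$ is exactly the copy of $S^{2,1^3}$ generated by the standard identity. It is known that the standard identity of degree five generates all degree-five identities of $W_1$, and this agrees with the absence of $S^{2,1^3}$ in $\Nov(5)$ recorded in Proposition~\ref{prop:notinjNov}. Since $\ker\kappa\subseteq\ker\kappa_{\Nov}$, and $\Lie(5)$ is multiplicity free, the kernel of $\kappa$ in arity $5$ is either zero or that copy of $S^{2,1^3}$. The theorem is therefore reduced to showing that the standard identity $s_5=\sum_{\sigma\in S_4}\sgn(\sigma)[x_{\sigma(1)},[x_{\sigma(2)},[x_{\sigma(3)},[x_{\sigma(4)},x_5]]]]$ maps to zero in $\AntiPL(5)$.

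This last vanishing is the main obstacle. Here Proposition~\ref{prop:dimensions} gives no help: $\AntiPL(5)$ does contain one copy of $S^{2,1^3}$, so we cannot argue by representation theory alone. We would first attempt a structural argument in the spirit of Proposition~\ref{prop:nosgn}. Write $s_5$ as the complete alternation over $x_1,\dots,x_4$ of $\ad_{x_1}\ad_{x_2}\ad_{x_3}\ad_{x_4}(x_5)$. Then expand the outer brackets using \eqref{eq:AP1}, which says that $-L_\circ$ is a representation, so $\ad$-words acting on $x_5$ can be rewritten in terms of left multiplications. Using \eqref{eq:AP2}, \eqref{eq:altI}, and the vanishing of the sign component in $\AntiPL(3)$ and $\AntiPL(4)$, we would then show that all alternated terms cancel. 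If this bookkeeping does not close up by hand, the fallback is an explicit computation: reduce $\kappa(s_5)$ modulo the Gröbner basis of $\AntiPL$ in arity $5$ (or its projection to the multiplicity space of $S^{2,1^3}$, via the Clifton matrix method of Proposition~\ref{prop:dimensions}) and verify that it is zero. Presenting an explicit expression of $\kappa(s_5)$ as a combination of consequences of \eqref{eq:AP1} and \eqref{eq:AP2} would be relegated to an appendix.
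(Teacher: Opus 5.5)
Your treatment of arities $n\le 4$ and your upper bound in arity five are the same as the paper's. Both come from the identities of $W_1$, which the paper obtains from Molev's theorem ($\dim\calO_{W_1}(n)=1,1,2,6,20$ for $n\le 5$), combined with $\ker\kappa\subseteq\ker\kappa_{\Nov}$.

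The genuine gap is the step you yourself call the main obstacle: you never prove that $\kappa(s_5)=0$ in $\AntiPL(5)$. Without it the arity-five claim is not established, since the upper bound is equally compatible with $\ker\kappa(5)=0$. An explicit Gröbner basis or Clifton-matrix computation would be an acceptable certificate, but as written it is only a promise.

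The structural route you sketch is also not on track as stated. The operator $\ad(x)=L_\circ(x)-R_\circ(x)$ involves right multiplications, and \eqref{eq:AP1} alone does not let you rewrite $\ad$-words in terms of left multiplications. What the paper uses instead (Lemma~\ref{lem:ad-comm}) is the mixed identity $\ad([x,y])=L_\circ(y)R_\circ(x)-L_\circ(x)R_\circ(y)$, obtained from \eqref{eq:AP1} together with \eqref{eq:altI}. Pairing the four $\ad$'s via the transpositions $(1\,2)$ and $(3\,4)$ then turns $s_5$ into the single alternated monomial $\langle x_1\circ((x_3\circ(x_5\circ x_4))\circ x_2)\rangle$.

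Even after this reduction, the terms do not simply cancel. The paper sets $Y=[x_1,x_2]$, $W=[x_3,x_4]$, $Z=[x_5,W]$ and $P=x_3\circ(x_5\circ x_4)$, and proves $P-P^{(3\,4)}=Z$. It then applies a subtree lemma, which is where the absence of the sign representation in arities $3$ and $4$ enters, as you anticipated. This yields three relations among $A=\langle Z\circ Y\rangle$, $B=\langle Y\circ Z\rangle$ and the target $C$:
\[
4C=A-B,\qquad B=4C,\qquad B=-A,
\]
whence $3B=0$.

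Some relation of this kind is unavoidable. By Remark~\ref{rem:char3}, in characteristic three $\kappa$ is injective, so $s_5\neq 0$ there. Yet \eqref{eq:AP1}, \eqref{eq:AP2}, \eqref{eq:altI} and the vanishing of the sign component (whose proof only divides by $2$) all remain valid in that characteristic. Hence any proof must at some point use that $3$ is invertible. A plan that merely expects ``all alternated terms to cancel'' from those ingredients cannot close unless it identifies where this happens, and your proposal gives no indication of it.
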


\begin{proof}
If the morphism $\kappa$ were not injective in arities less than five, the same would be true for the morphism from the Lie operad to the Novikov operad, which, as we saw above, would mean that the Lie algebra $W_1$ of vector fields on the line would satisfy some Lie identities of degree less than five. Identities of that Lie algebra are well studied, and it is proved by Molev \cite[Theorem~1]{Molev} that if we denote by $\calO_{W_1}$ the quotient of the operad $\Lie$ by the ideal of identities of $W_1$, then there is an isomorphism of $S_n$-modules
 \[
\calO_{W_1}(n)\cong\calH_n^{n-1},
 \]
where $\calH_n$ is the vector space of $S_n$-harmonic polynomials, and $\calH_n^k$, $k\ge 0$ denotes its subspace of homogeneous elements of degree $k$. It is well known that we have an isomorphism of graded vector spaces
 \[
\calH_n\cong \bk[x_1,\ldots,x_n]/(\bk[x_1,\ldots,x_n]_+^{S_n}),
 \]
           which immediately leads to the dimensions $1,1,2,6,20,71,259, 961$ \ldots of the components $\calO_{W_1}$
$(n)$. In particular, for $n\le 4$ we have $\dim\calO_{W_1}(n)=\dim\Lie(n)$, so the Lie algebra $W_1$ does not satisfy any nontrivial identities of arities less than five.
The fact that $\dim\calO_{W_1}(5)=20$ and $\dim\Lie(5)=24$ is an independent sanity check of Proposition \ref{prop:notinjNov}, and at the same time an upper bound
 \(
 \dim\ker\kappa(5)\le 4,
 \)
since
 \[
\ker(\Lie\to\AntiPL)\subset\ker(\Lie\to\Nov).
 \]

However, the preceding representation-theoretic argument does not establish that the standard identity of degree five holds in every commutator Lie algebra of an anti-pre-Lie algebra, since we saw in Proposition \ref{prop:dimensions} that the component $\AntiPL(5)$ contains one copy of the module $S^{2,1^3}$.
For completeness, we give a direct proof of the standard identity of degree $5$ in Appendix \ref{app:standard}.
It remains to recall that, as we already mentioned above, the standard identity generates the submodule $S^{2,1^3}$ of $\Lie(5)$, which is four-dimensional, proving that our upper bound on $\dim\ker\kappa(5)$ is actually sharp, and the kernel of $\kappa$ in arity five is generated by the standard identity.
\end{proof}

\begin{remark} \label{rem:char3}
The characteristic-zero assumption turns out to be crucial for this theorem: it turns out that over a field of characteristic three, the Lie bracket in the anti-pre-Lie operad does not satisfy any additional identities. To see that, let us take the quotient of the anti-pre-Lie operad by the ideal generated by $a\circ b+b\circ a$, or, equivalently, let us consider anti-pre-Lie algebras that are additionally skew-commutative. Then the identities of anti-pre-Lie algebras become
\begin{gather*}
a\circ(b\circ c)-b\circ(a\circ c)-(a\circ b)\circ c+3(a\circ b)\circ c=0,\\
2(a\circ b)\circ c+2(b\circ c)\circ a+2(c\circ a)\circ b=0,
\end{gather*}
and over a field of characteristic three both of these identities give the Jacobi identity. Thus, over a field of characteristic three there is a surjective map
 \[
\AntiPL\twoheadrightarrow\Lie.
 \]
The composite with $\kappa$ sends the Lie generator to twice itself.
Since $2$ is invertible in characteristic three, this composite is an
automorphism of $\Lie$, so $\kappa$ is injective. Moreover, one can check, for instance using the \texttt{albert} program~\cite{albert}, that over a field of characteristic three, the dimension of the component $\AntiPL(5)$ is $308$ and not $304$.
The particularity of characteristic three is also visible in the proof of Proposition \ref{prop:APLNov} where we see that the operad of admissible Novikov algebras is not isomorphic to the operad of Novikov algebras in that characteristic.
 \end{remark}

\subsection{Quadratic maximality}
\label{subsec:apl-quadratic-maximality}

From Theorem~\ref{thm:lie-kernel}, we know that the standard identity of
degree five persists under the proper enlargement from admissible
Novikov algebras to anti-pre-Lie algebras.
We now show that the anti-pre-Lie identities cannot be further
weakened, within the class of binary quadratic Lie-admissible
operads generated by $E$, while retaining this property.

\begin{proposition}
  \label{prop:apl-quadratic-maximality}
Let $R\subseteq R_{\AntiPL}$ be an $S_3$-submodule and $\mathcal{P}$ be a binary quadratic operad given by
 $\mathcal P=\mathcal F(E)/(R)$.
  Suppose that the commutator morphism
\[
\Lie\longrightarrow\mathcal P,
\qquad
[x,y]\longmapsto x\circ y-y\circ x,
\] exists and
annihilates the standard Lie polynomial of degree five.
Then $R=R_{\AntiPL}$, and hence the canonical morphism $
\mathcal P\twoheadrightarrow\AntiPL$
 is an isomorphism.
\end{proposition}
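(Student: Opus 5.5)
Since $R_{\AntiPL}$ is a four-dimensional $S_3$-module, the first step is to find its decomposition into irreducibles. Then I would list all $S_3$-submodules $R\subseteq R_{\AntiPL}$ and check each one against the hypotheses. By Proposition~\ref{prop:nosgn} and its proof, $R_{\AntiPL}$ contains the sign component spanned by \eqref{eq:AP2}. Its total dimension is $4$, and $\Free(E)(3)\cong 2\bk S_3\cong 2S^3\oplus 4S^{2,1}\oplus 2S^{1^3}$. Comparing with $\AntiPL(3)\cong 2S^3\oplus 3S^{2,1}$ from Proposition~\ref{prop:dimensions} gives
\[
R_{\AntiPL}\cong 2S^{1^3}\oplus S^{2,1}.
\]
The two sign copies are spanned by \eqref{eq:AP2} and \eqref{eq:altI}. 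The copy of $S^{2,1}$ is generated by \eqref{eq:AP1}, or more precisely by its part left after removing the alternations. The proper submodules $R$ are therefore of the following kinds: a sum of a subspace of the sign-isotypic part $2S^{1^3}$ (a line or the whole plane) with either $0$ or the $S^{2,1}$; or $0$ itself. This is a finite family parametrised by a projective line of sign lines, so the case analysis is manageable.

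The second step uses the Lie-admissibility hypothesis, namely that $[x,y]\mapsto x\circ y-y\circ x$ defines a morphism from $\Lie$. This forces the Jacobiator of the commutator to lie in $R$. The Jacobiator is a sign-type element of $\Free(E)(3)$: it is the alternation of $L$ minus the alternation of $R$, so it lies in $2S^{1^3}$ and equals a specific combination of \eqref{eq:AP2} and \eqref{eq:altI}. Consequently every admissible $R$ contains this particular sign line $\ell_J$. The remaining candidates are then $\ell_J$, $\ell_J\oplus S^{2,1}$, and $2S^{1^3}$. I would also check whether $\ell_J\oplus S^{2,1}$ already forces the full sign plane through the relations in $R(3)$; if it does, that case collapses to $R=R_{\AntiPL}$ at once.

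The third step eliminates each remaining proper candidate by showing that the standard polynomial of degree five does not vanish in $\mathcal P(5)$. The case $R=2S^{1^3}$ is handled by the map $\mathcal P\to\mathsf{Alt}_{\mathrm{sgn}}$. For $R=\ell_J$, the Lie-admissible operad with only the Jacobi relation, one can instead exhibit an explicit algebra, for example $A=\mathfrak g$ a free Lie algebra or $\mathfrak{sl}_2$ with $x\circ y=\tfrac12[x,y]$. Such an algebra satisfies exactly the Jacobi relation, and its commutator is the Lie bracket, on which the standard identity of degree five fails; for instance, $\mathfrak{sl}_2\oplus\mathfrak{sl}_2$ or $\mathfrak{sl}_3$ do not satisfy $s_5$. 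The same algebra works for any $R$ whose relations are satisfied by anticommutative Lie products. For those remaining $R$ that an anticommutative Lie product does not satisfy, such as those containing the $S^{2,1}$ copy of \eqref{eq:AP1}, I would look for other testing algebras: pre-Lie-type or Lie-admissible algebras satisfying \eqref{eq:AP1} together with one sign relation but not the other, for instance $A=\mathfrak g$ with $x\circ y=\lambda[x,y]+$ a symmetric part built from an invariant form. Failing that, I would fall back on a computer calculation, as in Proposition~\ref{prop:dimensions}, of the image of the $S^{2,1^3}$ isotypic component of $\Lie(5)$ in $\mathcal P(5)$.

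I expect the main obstacle to be this last case, $R=\ell_J\oplus S^{2,1}$ (if it is genuinely different from $R_{\AntiPL}$). Here $\mathcal P$ is very close to $\AntiPL$, and the standard identity nearly holds there: the computation in Appendix~\ref{app:standard} presumably uses both sign relations. The approach I would try first is to trace where Appendix~\ref{app:standard} uses \eqref{eq:AP2} separately from the Jacobi combination, and to build a small counterexample algebra, or do a targeted Gröbner computation in arity five, showing that the alternation survives without the full sign plane.
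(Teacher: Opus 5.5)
Your reduction agrees with the paper's. Write $\alpha_L,\alpha_R$ for the left-hand sides of \eqref{eq:AP2} and \eqref{eq:altI}.

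\begin{itemize}
\item You correctly decompose $R_{\AntiPL}\cong S^{2,1}\oplus 2S^{1^3}$, with sign-isotypic part spanned by $\alpha_L$ and $\alpha_R$.
\item Lie-admissibility puts $\operatorname{Jac}=\alpha_L-\alpha_R$ into $R$, which leaves three proper candidates.
\item For $R=\bk\operatorname{Jac}$ and $R=\bk\alpha_L\oplus\bk\alpha_R$, a free Lie algebra with $x\circ y=\tfrac12[x,y]$ is a $\mathcal P$-algebra whose commutator is the free bracket. This is equivalent to the paper's observation that $\mathcal P/(a\circ b+b\circ a)\cong\Lie$ in these two cases.
\end{itemize}

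Drop $\mathfrak{sl}_2$ and $\mathfrak{sl}_2\oplus\mathfrak{sl}_2$ as examples, though. The standard polynomial is alternating in four arguments, so it vanishes on any Lie algebra of dimension at most three. A direct sum satisfies every identity common to its summands, so $\mathfrak{sl}_2\oplus\mathfrak{sl}_2$ satisfies it too. The map to $\mathsf{Alt}_{\mathrm{sgn}}$ is never defined, and you do not need it.

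\textbf{The gap: the case $R=\bk\operatorname{Jac}\oplus S^{2,1}$.} This case is the whole difficulty of the proposition, and you give no argument for it.

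\emph{It does not collapse to $R_{\AntiPL}$.} The ideal generated by $R$ has arity-three component $R$ itself, so $\dim\mathcal P(3)=9$.

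\emph{Your proposed test algebras cannot work.} A Lie-admissible algebra satisfying \eqref{eq:AP1} is automatically anti-pre-Lie. The $S_3$-span of $\rho_1$ contains $\alpha_L+\alpha_R$, and together with $\operatorname{Jac}$ this gives both sign relations. So by Theorem~\ref{thm:lie-kernel} its commutator satisfies the standard identity. In this case a $\mathcal P$-algebra need only satisfy $\operatorname{Jac}$ and the $S^{2,1}$-component $\rho_{\mathrm{st}}=3\rho_1-2\rho_2+\operatorname{Jac}$, not $\rho_1$ itself.

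\emph{The other suggestions do not help either.}
\begin{itemize}
\item Anticommutative examples are useless: together with $\operatorname{Jac}$, the relation $\rho_{\mathrm{st}}$ forces $[[x,y],z]=0$, and a two-step nilpotent bracket satisfies the standard identity.
\item Tracing where Appendix~\ref{app:standard} uses \eqref{eq:AP2} only shows where that proof breaks. It does not show that the identity fails.
\end{itemize}

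\emph{How the paper closes this case.} It gives an explicit certificate: a linear functional $\Lambda$ on $\Free(E)(5)$.
\begin{itemize}
\item $\Lambda$ is alternating in $x_1,\dots,x_4$ and is specified by a table of values on the fourteen planar binary trees, for each of the five positions of $x_5$.
\item It is verified to vanish on $(\operatorname{Jac},\rho_{\mathrm{st}})(5)$ by evaluating it on all two-step insertions into $S_3$-translates of the two relations.
\item It takes the value $24\cdot 9=216$ on the standard polynomial.
\end{itemize}
Your fallback computer calculation is the right kind of argument. But until it is carried out, or replaced by an explicit $\mathcal P$-algebra violating the identity, the only nontrivial case of the proposition remains unproved.
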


\begin{proof}
Let us examine possible candidates $R\subseteq R_{\AntiPL}$. The existence of the commutator morphism implies that the element 
 \[
\operatorname{Jac}(x,y,z)=[[x,y],z]+[[y,z],x]+[[z,x],y]  
 \]
is contained in $R$. As an $S_3$-module, $R_{\AntiPL}\cong S^{2,1}\oplus 2S^{1^3}$, and $R_{\AntiPL}/\bk\operatorname{Jac}\cong S^{2,1}\oplus S^{1^3}$ is multiplicity-free. Thus, there are four possible candidates for $R$, corresponding to four choices of a subset of irreducible constituents of $R_{\AntiPL}/\bk\operatorname{Jac}$. Including both constituents would mean $R=R_{\AntiPL}$, so it remains to rule out three other choices. 

The isotypic component of $S^{1^3}$ in $R_{\AntiPL}$ is spanned by $\operatorname{Jac}$ and the element $\rho_2(x,y,z)$ from the proof of Proposition \ref{prop:cyclic}. In the notation of that proof, the isotypic component of $S^{2,1}$ is generated by the element 
  \[
\rho_{\mathrm{st}}(x,y,z)=3\rho_1(x,y,z)-2\rho_2(x,y,z)
  +\operatorname{Jac}(x,y,z).    
  \]
Indeed, this element $\rho_{\mathrm{st}}$ is skew-symmetric in its first two arguments and has zero cyclic sum in the free operad; it is nonzero, since the coefficient of $x\circ(y\circ z)$ in its expansion is $2$.

Note that if we additionally impose the anti-commutativity relation $a\circ b=-b\circ a$, the elements $\rho_2$ and $\operatorname{Jac}$ become proportional; this means that for $R/\bk\operatorname{Jac}=0$ or $R/\bk\operatorname{Jac}\cong S^{1^3}$, the quotient of $\mathcal P$ by the ideal generated by $a\circ b+b\circ a$ is the operad $\Lie$. Thus, in these two cases the standard Lie identity cannot hold for the Lie bracket in $\mathcal P$. 

The case $R/\bk\operatorname{Jac}=S^{2,1}$ is much more intricate, since, as we saw in Remark \ref{rem:char3}, over a field of characteristic different from three combining $\rho_{\mathrm{st}}$ with anti-commutativity forces the Lie bracket to be two-step nilpotent, and the above argument cannot work. To prove that the Lie bracket in $\mathcal P_{\mathrm{st}}=\mathcal F(E)/(\operatorname{Jac},\rho_{\mathrm{st}})$ does not satisfy the standard identity of degree five, we construct a linear functional $\Lambda$ on $\mathcal F(E)(5)$ that vanishes on
  $(\operatorname{Jac},\rho_{\mathrm{st}})(5)$ but is nonzero
  on the commutator expansion of the standard Lie polynomial
  of degree five.
  
  In the following table, juxtaposition denotes the binary
  operation, and each row specifies a planar binary tree $T$
  with five ordered leaves.
  For $1\le j\le5$, let $T_j$ be obtained by placing $x_5$
  in the $j$-th leaf and $x_1,x_2,x_3,x_4$, in this order,
  in the remaining leaves.
  The entry in column $j$ is $\Lambda(T_j)$.
  Every multilinear monomial has a unique expression
  $\sigma\cdot T_j$, where $\sigma\in S_4$ permutes
  $x_1,x_2,x_3,x_4$ and fixes $x_5$.
  We extend the table by
  \[
  \Lambda(\sigma\cdot T_j)
  =\operatorname{sgn}(\sigma)\Lambda(T_j)
  \]
  and then extend linearly to $\mathcal F(E)(5)$.
  
  \begin{center}
    \renewcommand{\arraystretch}{1.15}
    \begin{tabular}{c|rrrrr}
      $T(a,b,c,d,e)$ & $j=1$ & $j=2$ & $j=3$ & $j=4$ & $j=5$ \\\hline
      $a(b(c(de)))$ & $3$ & $-17$ & $-7$ & $17$ & $0$ \\
      $a(b((cd)e))$ & $3$ & $-17$ & $-8$ & $0$ & $-16$ \\
      $a((bc)(de))$ & $-9$ & $-6$ & $0$ & $-15$ & $0$ \\
      $a((b(cd))e)$ & $20$ & $26$ & $14$ & $0$ & $8$ \\
      $a(((bc)d)e)$ & $20$ & $4$ & $0$ & $8$ & $8$ \\
      $(ab)(c(de))$ & $12$ & $0$ & $-27$ & $-27$ & $0$ \\
      $(ab)((cd)e)$ & $12$ & $0$ & $0$ & $0$ & $0$ \\
      $(a(bc))(de)$ & $-18$ & $-18$ & $0$ & $0$ & $0$ \\
      $((ab)c)(de)$ & $0$ & $0$ & $0$ & $0$ & $0$ \\
      $(a(b(cd)))e$ & $12$ & $24$ & $-12$ & $0$ & $0$ \\
      $(a((bc)d))e$ & $12$ & $12$ & $0$ & $24$ & $0$ \\
      $((ab)(cd))e$ & $36$ & $0$ & $36$ & $0$ & $0$ \\
      $((a(bc))d)e$ & $0$ & $0$ & $0$ & $0$ & $24$ \\
      $(((ab)c)d)e$ & $0$ & $0$ & $0$ & $0$ & $24$
    \end{tabular}
  \end{center}
  
  We verify that $\Lambda$ annihilates the required ideal component.
  For a multilinear polynomial $p$ in $n$ variables, put
  \[
  \begin{aligned}
    \operatorname{Ins}_n(p)
    ={}&\{x_{n+1}\circ p,\ p\circ x_{n+1}\}\\
    &{}\cup
    \{p(x_1,\ldots,x_{i-1},x_i\circ x_{n+1},
    x_{i+1},\ldots,x_n):1\le i\le n\}.
  \end{aligned}
  \]
  The space $(\operatorname{Jac},\rho_{\mathrm{st}})(5)$
  is spanned by the $S_5$-translates of polynomials obtained
  by applying $\operatorname{Ins}_3$ and then
  $\operatorname{Ins}_4$ to an $S_3$-translate of
  $\operatorname{Jac}$ or $\rho_{\mathrm{st}}$.
  Indeed, this ideal component is spanned by trees with one
  ternary vertex labelled by an $S_3$-translate of a defining
  relation and two binary vertices labelled by the generating
  operation.
  Such trees are obtained by two successive binary insertions,
  with permutations accounting for both orders of the inputs.
  By equivariance, permutations introduced between the insertions
  can be moved to the final $S_5$-action.
  
Since $\Lambda$ is alternating under $S_4$, it is enough
to check representatives of $S_4\backslash S_5$, where
\[
S_4\backslash S_5
=
\{S_4g:g\in S_5\},
\qquad
S_4g=\{hg:h\in S_4\}.
\]
  We take $\pi_k=(k\,5)$ for $1\le k\le4$ and
  $\pi_5=\mathrm{id}$.
  For $g\in S_5$, put $k=g^{-1}(5)$ and $h=g\pi_k$.
  Then $h\in S_4$ and $g=h\pi_k$.
  Thus, for every $p\in\mathcal F(E)(5)$,
  \[
  \Lambda(g\cdot p)
  =
  \operatorname{sgn}(h)\Lambda(\pi_k\cdot p).
  \]
  For each $r\in\{\operatorname{Jac},\rho_{\mathrm{st}}\}$
  and each $\sigma\in S_3$, choose
  \[
  p_4\in\operatorname{Ins}_3(\sigma\cdot r),
  \qquad
  p\in\operatorname{Ins}_4(p_4).
  \]
  An exact computation using the table gives
  \[
  \Lambda(\pi_k\cdot p)=0
  \qquad (1\le k\le5)
  \]
  for all such choices.
  These are $2\cdot6\cdot5\cdot6\cdot5=1800$ integer checks,
  obtained by expanding the polynomials and evaluating their
  monomials using the table.
  They establish that $\Lambda$ vanishes on
  $(\operatorname{Jac},\rho_{\mathrm{st}})(5)$.
  
  Finally, expanding the nested commutator gives
  \[
  \Lambda\bigl([x_1,[x_2,[x_3,[x_4,x_5]]]]\bigr)=9.
  \]
  By the alternating property of $\Lambda$, we obtain
  \[
  \Lambda\left(
  \sum_{\sigma\in S_4}\operatorname{sgn}(\sigma)
  [x_{\sigma(1)},[x_{\sigma(2)},
  [x_{\sigma(3)},[x_{\sigma(4)},x_5]]]]
  \right)
  =24\cdot9=216\ne0,
  \]
 completing the proof.
\end{proof}

\section{The Koszul dual of the anti-pre-Lie operad and its properties}\label{sec:Koszul}

    In this section, we give an explicit presentation and a dimension formula for the Koszul dual operad $\AntiPLdual$ of $\AntiPL$.
We conclude that both $\AntiPL$ and $\AntiPLdual$ are not Koszul.
We also identify the suboperad of $\AntiPLdual$ generated by the commutator with the operad of metabelian Lie algebras.
Then we introduce the notion of quadratic dual anti-pre-Lie algebras,
characterize them in finite dimensions through representation theory and relate them to quadratic Lie algebras.

 \subsection{Recollections on quadratic operads and Koszul duality}
 \label{sec:preliminaries}

 We recall only what is needed below, referring to \cite[Chapters 5--7]{LodayVallette} for details. Let $E=\bk[S_2]$, and let $\calP=\Free(E)/(R)$ be an operad with $R\subset \Free(E)(3)$; the fact that the axioms relating the defining operations are ternary 
 identities
 means that our operad $\calP$ is \emph{quadratic} (since a ternary identity is a combination of compositions of exactly two structure operations). The version of Koszul duality used below applies to quadratic operads.

Let $E^*=\operatorname{Hom}_{\bk}(E,\bk)$ be the linear dual of $E$.
For $f\in E^*$ and $e\in E$, we write
$\langle f,e\rangle=\langle e,f\rangle=f(e)$
for the natural pairing between $E^*$ and $E$.
The $S_2$-action on $E$ induces the contragredient action on $E^*$,
defined by
\begin{equation}
    \langle \sigma(f),e\rangle
    =\langle f,\sigma^{-1}(e)\rangle,
    \quad \sigma\in S_2.
\end{equation}

Let $\sgn_2$ denote the one-dimensional sign representation of $S_2$,
on which $\sigma$ acts by multiplication by $\sgn(\sigma)$.
The \emph{sign-twisted dual} of $E$ is defined by
$E^\vee=E^*\otimes\sgn_2$.
Thus, as a vector space, $E^\vee$ may be identified with $E^*$,
but its $S_2$-action is twisted by sign:
\[
\sigma\cdot_{E^\vee}f
=\sgn(\sigma)\,\sigma\cdot_{E^*}f.
\]
In particular, the nontrivial transposition $(12)$ acts on $E^\vee$ as
the negative of its action on $E^*$. This sign-twisted dual provides the
space of binary generators for the Koszul dual operad.
There is a canonical non-degenerate pairing
$\langle-,-\rangle\colon
\Free(E)(3)\otimes\Free(E^\vee)(3)\to\bk$, described as follows. Recall
that $\Free(E)(3)$ has a basis $\{L(\sigma),R(\sigma)\}_{\sigma\in S_3}$; we shall denote by $L'(\sigma),R'(\sigma)$
the analogous basis of $\Free(E^\vee)(3)$. Then the pairing
    $\langle-,-\rangle\colon
    \Free(E)(3)\otimes\Free(E^\vee)(3)\to\bk$
    is given by
    \[
    \langle L(\sigma),L'(\tau)\rangle=\sgn(\sigma)\delta_{\sigma,\tau},
\;
    \langle R(\sigma),R'(\tau)\rangle=-\sgn(\sigma)\delta_{\sigma,\tau},
\;
    \langle L(\sigma),R'(\tau)\rangle= \langle R(\sigma),L'(\tau)\rangle=0.
    \]
This is precisely the pairing of \cite[Lemma 7.6.3]{LodayVallette} for
$E=\bk[S_2]$, transported to the present labelling of the twelve
monomials. The sign discrepancy between the $L$- and $R$-families is
the usual one between the two partial compositions
$\circ_{\mathrm{I}}$ and $\circ_{\mathrm{II}}$, conceptually explained by suitable
operadic suspensions. This pairing is non-degenerate and $S_3$-sign-invariant:
\[
\langle g r,g q\rangle=\sgn(g)\langle r,q\rangle,
\quad g\in S_3.
\]
In particular, the orthogonal complement of an $S_3$-submodule is again
an $S_3$-submodule.
The \emph{Koszul dual} of the operad $\calP$ is, by definition,
\[
\calP^!=\Free(E^\vee)/(R^\perp),
\]
where $R^\perp=\{q\in\Free(E^\vee)(3)\mid
\langle r,q\rangle=0\text{ for all }r\in R\}$ is the orthogonal complement of $R$ with respect to
this pairing \cite[\S7.6]{LodayVallette}.

For comparison with \cite{LodayVallette}, the following table gives the
dictionary between our notation and the permutoassociahedron numbering
$u_1,\dots,u_{12}$ used in \cite[\S7.6]{LodayVallette}, together with
the diagonal signs $\epsilon_i=\langle u_i,v_i\rangle$, where
$v_1,\dots,v_{12}$ denote the corresponding basis of
$\Free(E^\vee)(3)$. In this numbering, the above pairing
takes the form $\langle u_i,v_j\rangle=(-1)^{i+1}\delta_{ij}$.
The $L$-type entries occupy the positions
$i\equiv0,1\pmod 4$, while the $R$-type entries occupy the positions
$i\equiv2,3\pmod 4$.

\begin{center}
	\renewcommand{\arraystretch}{1.15}
	\begin{tabular}{c|c|c|c@{\quad}|@{\quad}c|c|c|c}
		$i$ & monomial & our notation & $\epsilon_i$
		& $i$ & monomial & our notation & $\epsilon_i$ \\
		\hline
		$1$
		& $(x_1\circ x_2)\circ x_3$
		& $L(1,2,3)$ & $+1$
		& $7$
		& $x_3\circ(x_2\circ x_1)$
		& $R(3,2,1)$ & $+1$ \\
		
		$2$
		& $x_1\circ(x_2\circ x_3)$
		& $R(1,2,3)$ & $-1$
		& $8$
		& $(x_3\circ x_2)\circ x_1$
		& $L(3,2,1)$ & $-1$ \\
		
		$3$
		& $x_1\circ(x_3\circ x_2)$
		& $R(1,3,2)$ & $+1$
		& $9$
		& $(x_2\circ x_3)\circ x_1$
		& $L(2,3,1)$ & $+1$ \\
		
		$4$
		& $(x_1\circ x_3)\circ x_2$
		& $L(1,3,2)$ & $-1$
		& $10$
		& $x_2\circ(x_3\circ x_1)$
		& $R(2,3,1)$ & $-1$ \\
		
		$5$
		& $(x_3\circ x_1)\circ x_2$
		& $L(3,1,2)$ & $+1$
		& $11$
		& $x_2\circ(x_1\circ x_3)$
		& $R(2,1,3)$ & $+1$ \\
		
		$6$
		& $x_3\circ(x_1\circ x_2)$
		& $R(3,1,2)$ & $-1$
		& $12$
		& $(x_2\circ x_1)\circ x_3$
		& $L(2,1,3)$ & $-1$
	\end{tabular}
\end{center}

The Koszul duality theory is particularly powerful for a given operad $\calP$ if that operad possesses
the Koszul property, amounting to vanishing of certain homology groups. The cheapest way to \emph{disprove} the Koszul property
comes from the following generating series criterion of Ginzburg and Kapranov \cite{GinzburgKapranov}, see also \cite[Theorem 7.5.1]{LodayVallette}. Let $f_{\calP}(t)=\sum_{n\geq1}\dim\calP(n)\,t^n/n!$ denote the exponential generating series. If $\calP$ is Koszul, then
\begin{equation}\label{eq:GKtest}
    f_{\calP^{!}}\bigl(-f_{\calP}(-t)\bigr) = t.
    \end{equation}

\subsection{The Koszul dual \texorpdfstring{$\AntiPLdual$}{AntiPL!}}
\label{sec:koszul-dual}

Let us apply the general recipe recalled in the previous subsection to determine the Koszul dual operad of the operad
$\AntiPL$.

\begin{theorem}
\label{thm:koszul-dual}
The Koszul dual of $\AntiPL$ is the operad $\AntiPLdual$ generated by one binary operation $\star$ subject to the identities
    \begin{align}
        (a_1\star a_2)\star a_3
        &= (a_2\star a_1)\star a_3, \label{eq:DAPLI}\\
        a_1\star(a_2\star a_3)
        &= a_2\star(a_1\star a_3), \label{eq:DAPLII}\\
        (a_1\star a_2)\star a_3+a_1\star(a_2\star a_3)
        &= (a_2\star a_3)\star a_1+a_2\star(a_3\star a_1). \label{eq:DAPLIII}
    \end{align}
\end{theorem}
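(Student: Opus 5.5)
The plan is a direct linear-algebra computation with the pairing recalled in Section~\ref{sec:preliminaries}. Since $\dim\Free(E)(3)=12$ and $\dim R_{\AntiPL}=4$, the orthogonal complement $R_{\AntiPL}^\perp\subset\Free(E^\vee)(3)$ has dimension $8$. I will identify the generator of $E^\vee$ with a binary operation $\star$ with basis $a_1\star a_2$, $a_2\star a_1$, so that $L'(\sigma)=(a_{\sigma(1)}\star a_{\sigma(2)})\star a_{\sigma(3)}$ and $R'(\sigma)=a_{\sigma(1)}\star(a_{\sigma(2)}\star a_{\sigma(3)})$. The sign twist in $E^\vee$ is absorbed in the sign conventions of the pairing, and one checks that this labelling is compatible with it. It then suffices to show two things. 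First, the $S_3$-module $Q$ spanned by the $S_3$-translates of \eqref{eq:DAPLI}--\eqref{eq:DAPLIII} is orthogonal to $R_{\AntiPL}$. Second, $\dim Q=8$.

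For orthogonality, I use the fact that the pairing is $S_3$-sign-invariant. It is therefore enough to pair each of the three relations, written in the basis $L'(\tau),R'(\tau)$, against all $S_3$-translates of the two generators $\rho_1$ and $\rho_2$ of $R_{\AntiPL}$. In practice it is easier to fix the three dual relations and let $g$ range over $S_3$ acting on $\rho_1$ and $\rho_2$. This gives $3\cdot 2\cdot 6=36$ pairings, but many vanish for trivial reasons because of the $L/R$ splitting.

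\begin{itemize}
\item \eqref{eq:DAPLI} involves only $L'$-terms. It pairs only with the $L$-parts, and $\rho_2$ consists only of $L$-terms.
\item \eqref{eq:DAPLII} involves only $R'$-terms, so it pairs only with the $R$-part $R(1,2,3)-R(2,1,3)$ of $\rho_1$ and its translates.
\end{itemize}

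With the formula $\langle L(\sigma),L'(\sigma)\rangle=\sgn\sigma$ and $\langle R(\sigma),R'(\sigma)\rangle=-\sgn\sigma$, each pairing reduces to a signed count of at most four coinciding monomials.

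A short sanity check makes these cancellations expected. Relation \eqref{eq:DAPLI} is symmetric in $a_1,a_2$ for $L'$, while the $L$-part of $\rho_1$ is antisymmetric in $x_1,x_2$, and $\sgn$ flips under the swap, so symmetric pairs with symmetric to give zero appropriately. The same reasoning applies to \eqref{eq:DAPLII}. Relation \eqref{eq:DAPLIII} mixes $L'$ and $R'$ with opposite diagonal signs, which matches the $L+R$ combination in $\rho_1$.

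For the dimension count, I span $Q$ explicitly.

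\begin{itemize}
\item The translates of \eqref{eq:DAPLI} give $3$ independent relations, namely $L'(\sigma)=L'(\sigma\circ(12))$ for the three cosets.
\item The translates of \eqref{eq:DAPLII} similarly give $3$ independent relations among the $R'$.
\item The six translates of \eqref{eq:DAPLIII}, modulo the previous six, reduce to relations saying that $L'+R'$ applied to the reading order $(1,2,3)$ is invariant under the cyclic shift. Modulo the first six relations these are cyclic differences among three classes. They therefore span a $2$-dimensional space, provided one checks that the transposition translates give nothing new once \eqref{eq:DAPLI} and \eqref{eq:DAPLII} are used.
\end{itemize}

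This gives $3+3+2=8=\dim R_{\AntiPL}^\perp$, so $Q=R_{\AntiPL}^\perp$.

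The main obstacle is bookkeeping of signs: correctly identifying $L'$ and $R'$ with $\star$-monomials under the sign-twisted action, so that the relations come out with the stated signs rather than, say, $(a_1\star a_2)\star a_3=-(a_2\star a_1)\star a_3$. I would fix this first by computing a single pairing, \eqref{eq:DAPLI} against $\rho_2$, and a single pairing, \eqref{eq:DAPLIII} against $\rho_1$, and adjusting the convention if needed. As a cross-check, I would verify that $Q$ decomposes as an $S_3$-module as the complement expected from $R_{\AntiPL}\cong S^{2,1}\oplus2S^{1^3}$ twisted by the sign, namely $\Free(E)(3)\cong 2\,\bk S_3$ minus $R_{\AntiPL}$, tensored with $\sgn$. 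This gives $2S^3\oplus 3S^{2,1}$, matching the decomposition of $\AntiPL(3)$ tensored with sign.
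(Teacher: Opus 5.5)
Your proposal is correct and follows the paper's proof: the same $3+3+2=8$ dimension count, plus an orthogonality check. The paper streamlines that check by setting $J(\sigma)=L(\sigma)+R(\sigma)$ and noting $\langle J(\alpha),J(\beta)\rangle=0$ for all $\alpha,\beta$, which disposes of \eqref{eq:AP1} against \eqref{eq:DAPLIII} at once. It also observes that $\rho_2=\sum_\sigma\sgn(\sigma)L(\sigma)$ pairs with $L(\sigma)-L(\sigma\tau)$, and with the $L$-part of $J(\sigma)-J(\sigma c)$, to give $\sgn(\sigma)^2-\sgn(\sigma')^2=0$, where $\sigma'\in\{\sigma\tau,\sigma c\}$, $\tau=(1\,2)$ and $c$ is the cyclic shift acting on positions. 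Note that the case \eqref{eq:DAPLIII} against $\rho_2$ is missing from your list, but it is exactly this one-line check. In your optional cross-check, sign-invariance of the pairing gives $R_{\AntiPL}^\perp\cong\AntiPL(3)\otimes\sgn\cong 2S^{1^3}\oplus 3S^{2,1}$; the module $2S^3\oplus 3S^{2,1}$ you wrote is $\AntiPL(3)$ itself, not the expected decomposition of $Q$.
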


\begin{proof}
To lighten the notation, we drop the primes below and write $L(\sigma),R(\sigma)$ for the basis
elements of both $\Free(E)(3)$ and $\Free(E^{\vee})(3)$.

In either of $\Free(E)(3)$ and $\Free(E^{\vee})(3)$ put
\[
  J(\sigma) \;=\; L(\sigma) + R(\sigma), \quad \sigma \in S_{3},
\]
and let $\tau = (1\,2)$ and $c = (1\,2\,3)$ act on $S_{3}$ on the right,
by relabeling inputs,
 that is, if
    $\sigma=(i,j,k)$
 is written in one-line notation, then
    $ \sigma\tau=(j,i,k)$ and $\sigma c=(j,k,i) $.
 In this notation the two relation spaces are
generated as follows. On the $\AntiPL$ side, Relation \eqref{eq:AP1} is $J(\sigma) - J(\sigma\tau)$ and
Relation \eqref{eq:AP2} is $\sum_{\sigma}\sgn(\sigma)L(\sigma)$; on the $\AntiPLdual$
side, \eqref{eq:DAPLI} gives the vectors $L(\sigma) - L(\sigma\tau)$, \eqref{eq:DAPLII} gives
$R(\sigma) - R(\sigma\tau)$, and \eqref{eq:DAPLIII} gives $J(\sigma) - J(\sigma c)$.

\emph{Dimension.} The vectors $L(\sigma)-L(\sigma\tau)$ span a
$3$-dimensional space: modulo them, $L(\sigma)$ depends only on the
unordered pair $\{\sigma(1),\sigma(2)\}$, equivalently on $\sigma(3)$, so
the six $L(\sigma)$ collapse to three classes $\ell_{1},\ell_{2},\ell_{3}$
indexed by the last input. Symmetrically the $R(\sigma)-R(\sigma\tau)$
span another $3$-dimensional space, leaving classes
$\eta_{1},\eta_{2},\eta_{3}$. Modulo these six vectors, \eqref{eq:DAPLIII} becomes
$\ell_{a}+\eta_{a} = \ell_{b}+\eta_{b}$,
which gives two independent conditions.
Hence $\dim R_{\AntiPLdual} = 3+3+2 = 8$, and
$\dim\AntiPLdual(3) = 12-8 = 4$.

\emph{Orthogonality.} Recall that the pairing we use is
diagonal, with
 \[
\langle L(\alpha), L(\beta)\rangle
= \sgn(\alpha)\delta_{\alpha\beta},\quad
\langle R(\alpha), R(\beta)\rangle = -\sgn(\alpha)\delta_{\alpha\beta} .
 \]

Three observations
suffice. First, $J$-type vectors are orthogonal to each other for this pairing:
\[
  \langle J(\alpha), J(\beta)\rangle
  = \langle L(\alpha), L(\beta)\rangle + \langle R(\alpha), R(\beta)\rangle
  = \bigl(\sgn(\alpha) - \sgn(\alpha)\bigr)\delta_{\alpha\beta} = 0 .
\]
Since \eqref{eq:AP1} and the generators of \eqref{eq:DAPLIII} are both differences of
$J(\sigma)$'s, this proves vanishing of all the corresponding pairings at once.

Second, against \eqref{eq:DAPLI} only the $L$-part of
\eqref{eq:AP1} contributes. For any $\sigma,\pi$, we have
\[
\bigl\langle L(\sigma)-L(\sigma\tau),\;
L(\pi)-L(\pi\tau)\bigr\rangle=0.
\]
Indeed, the pairing is zero unless
$\pi\in\{\sigma,\sigma\tau\}$, in which case it equals either
$\operatorname{sgn}(\sigma)+\operatorname{sgn}(\sigma\tau)$
or its negative; both vanish since
$\operatorname{sgn}(\sigma\tau)=-\operatorname{sgn}(\sigma)$.
The same argument applied to the $R$-parts handles~(14).

Third, \eqref{eq:AP2} involves only $L$-terms, so it is automatically orthogonal
to the pure-$R$ vectors of \eqref{eq:DAPLII}; against the pure-$L$ vectors of \eqref{eq:DAPLI}
one gets $\sgn(\pi)^{2} - \sgn(\pi\tau)^{2} = 0$, and against \eqref{eq:DAPLIII}
only the $L$-part contributes, giving
$\sgn(\sigma)^{2} - \sgn(\sigma c)^{2} = 0$.

Thus $R_{\AntiPLdual} \subseteq R_{\AntiPL}^{\perp}$. Since
$\dim R_{\AntiPL} + \dim R_{\AntiPLdual} = 4 + 8 = 12 = \dim\Free(E)(3)$
and the pairing is non-degenerate, the inclusion is an equality.
\end{proof}

This leads to the following definition.

\begin{definition}
    \label{def:dual-anti-prelie}
    A \emph{dual anti-pre-Lie algebra} is a vector space $D$ together with a
    bilinear operation $\star\colon D\otimes D\to D$ satisfying \eqref{eq:DAPLI}--\eqref{eq:DAPLIII} for all $a_{1}, a_{2}, a_{3}\in D$.
\end{definition}

\subsection{A Gr\"obner basis argument for dimensions of the dual anti-pre-Lie operad}\label{sec:Grobner}

Let us explain how one can study the operad $\AntiPLdual$ using Gr\"obner bases for operads \cite{DotsenkoKhoroshkin}.

\begin{theorem}
The dimension of the $n$-th component of the operad $\AntiPLdual$ is equal to $n+1$ for $n\ge 3$.
\end{theorem}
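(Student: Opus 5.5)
The proof combines a Gröbner basis upper bound with an explicit model giving the matching lower bound.

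\emph{Upper bound.} I would work with the shuffle operad associated to $\AntiPLdual$. Fix a path-lexicographic ordering of shuffle tree monomials, for instance the one in which left combs $(x_1\star x_{i})\star\cdots$ are smallest. In arity three, the eight relations \eqref{eq:DAPLI}--\eqref{eq:DAPLIII} have leading terms that leave exactly four normal monomials, since $\dim\AntiPLdual(3)=4$ by Theorem~\ref{thm:koszul-dual}. Next I would run the Buchberger algorithm for shuffle operads (\cite{DotsenkoKhoroshkin}) on the S-polynomials coming from small common multiples in arity four. I expect a few new cubic elements to appear, because the operad is not Koszul, so the quadratic relations cannot form a Gröbner basis. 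The aim is to check that the algorithm terminates with a finite Gröbner basis whose elements have arity at most $4$ or $5$. Once that is known, the normal monomials form a regular language of tree monomials, and I would count them directly. They should be described as follows: for $n\ge 3$, one right comb $x_1\star(x_2\star(\cdots\star x_n))$, together with $n$ monomials of the form ``left comb ending in $x_i$'' (one for each choice of distinguished last leaf). This gives exactly $n+1$ normal forms and hence $\dim\AntiPLdual(n)\le n+1$.

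\emph{Lower bound.} The normal forms only bound the dimension from above unless the Gröbner basis is verified to be complete, so I would also construct a model. Consider the free dual anti-pre-Lie algebra on generators $x_1,\dots,x_n$. Relations \eqref{eq:DAPLI} and \eqref{eq:DAPLII} say that $L_a L_b = L_b L_a$ and that $(a\star b)\star c$ is symmetric in $a,b$. This suggests realising the multilinear part on a space with basis $e_1,\dots,e_n,f$. Here $e_i$ stands for ``the monomial whose last input is $x_i$, with the remaining inputs symmetric'', and $f$ is a fully symmetric class. The products must then be defined compatibly with insertion, and one checks that \eqref{eq:DAPLIII} holds in this model, identifying $\ell_a+\eta_a$ with a common element. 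Concretely, I would guess the model $\AntiPLdual(n)\cong \bk^n\oplus\bk$, that is, $S^{n}\oplus S^{n-1,1}\oplus S^{n}$ as an $S_n$-module. This is consistent with arity three, where $\ell_a+\eta_a$ spans one trivial summand and the remaining classes give the rest. The required check is that the partial compositions are associative and respect all three relations. That check produces a surjection $\AntiPLdual(n)\to \bk^{n+1}$, and hence the lower bound.

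\emph{Main obstacle.} The hard step is the Gröbner basis completion. One has to find the right ordering and verify that all S-polynomials in arities $4$ and $5$ reduce to zero, so that there are no further elements in higher arity. My first attempt would be a reverse path-lexicographic order, chosen so that \eqref{eq:DAPLIII} has a right-comb leading term. I would then confirm the arity-$4$ count of $5$ normal forms by machine (\cite{DH}) before doing the general reduction by hand.
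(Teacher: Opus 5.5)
Your two-sided architecture (an upper bound from normal forms plus an independent lower bound) is sound. However, neither half is carried out, and the one concrete prediction you make is false. For the upper bound you never produce a Gröbner basis. Worse, the normal forms you forecast cannot arise for any ordering when $n$ is even, because they are linearly dependent.

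Already in arity four, $((x_1\star x_2)\star x_3)\star x_4=x_1\star(x_2\star(x_3\star x_4))$ holds in $\AntiPLdual(4)$. To see this, write $J(a,b,c)=(a\star b)\star c+a\star(b\star c)$, which is totally symmetric by Lemma~\ref{lem:reductions}. Using $J(a,b,c\star d)=c\star J(a,b,d)$ and commuting left multiplications, one gets
\[
((x_1\star x_2)\star x_3)\star x_4=J(x_3,x_4,x_1\star x_2)-x_3\star J(x_1,x_2,x_4)+x_3\star(x_1\star(x_2\star x_4)).
\]
The first two terms both equal $\Theta_4$ by Proposition~\ref{prop:Theta}, so they cancel. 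In general, a monomial whose last leaf $x_j$ has depth $k$ equals $\mathbf r_j$ or $\Theta_n-\mathbf r_j$ according as $n-1-k$ is even or odd. Hence for even $n$ your $n$ left combs together with $\mathbf r_n$ span only the $n$-dimensional space of right combs.

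The paper does the computation you postpone. For the reverse path-lexicographic ordering in which the corolla $a_2\star a_1$ is larger than $a_1\star a_2$, it writes down an explicit Gröbner basis of eight quadratic and six cubic elements. The resulting normal forms are:
\begin{itemize}
\item the right comb $x_n\star(\cdots\star(x_2\star x_1))$;
\item the monomial $x_n\star(\cdots((x_1\star x_2)\star x_3)\cdots\star x_{n-1})$;
\item the $n-1$ left combs ending in $x_i$, for $2\le i\le n$.
\end{itemize}
Two further remarks on your plan. Once cubic elements are present, S-polynomials must be checked up to arity six, not just four and five. And once completeness is verified, the normal forms are already a basis, so no separate lower bound is needed.

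Your lower bound has the same problem. You correctly guess the $S_n$-module $\bk^n\oplus\bk$, but you never define the compositions. The description you sketch, with $e_i$ the class of a monomial whose last input is $x_i$, cannot work as stated. Sending every monomial with last leaf $x_j$ to $e_j$ respects \eqref{eq:DAPLI} and \eqref{eq:DAPLII}. But it sends $(a\star b)\star c+a\star(b\star c)-(b\star c)\star a-b\star(c\star a)$ to $2e_c-2e_a\neq0$.

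The missing idea is a parity twist. Let $k(m)$ be the depth of the last leaf of $m$ and put $\varepsilon(m)=(n-1-k(m))\bmod 2$. The paper's second proof sends $m$ to $(1-2\varepsilon(m))e_{\mathrm{last}(m)}+\varepsilon(m)f$. This map kills each defining relation. It is also compatible with every context, because $n-1-k$ is additive along the rightmost branch and the coefficients of each relation sum to zero. Combine it with the spanning set $\mathbf r_1,\dots,\mathbf r_n,\Theta_n$, which comes from the total symmetry of $J$ and the rule $J(a,b,c\star d)=c\star J(a,b,d)$. This realises exactly your two-sided strategy, with no Gröbner basis completion at all.
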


\begin{proof}

We first convert the presentation of this operad by relations \eqref{eq:DAPLI}, \eqref{eq:DAPLII}, \eqref{eq:DAPLIII} of Definition~\ref{def:dual-anti-prelie} into a shuffle operad presentation. For that, we denote
 \[
\Xsmall{1}{2}:=a_1\star a_2,\quad
\Zsmall{1}{2}:=a_2\star a_1,
 \]
and write the above relations for all permutations of arguments using this notation, obtaining the generators
\begin{gather*}
\LZZ{1}{2}{3} - \LZX{1}{2}{3},\quad
\LZZ{1}{3}{2} - \LZX{1}{3}{2},\quad
\RXZ{1}{2}{3} - \RXX{1}{2}{3},\\
\RZZ{1}{2}{3} - \LXZ{1}{3}{2},\quad
\LXX{1}{2}{3} - \LXX{1}{3}{2},\quad
\RZX{1}{2}{3} - \LXZ{1}{2}{3},\\
\LZZ{1}{2}{3} + \RZZ{1}{2}{3} - \LZZ{1}{3}{2} - \RZX{1}{2}{3},\quad
\LZZ{1}{2}{3} + \RZZ{1}{2}{3} - \RXZ{1}{2}{3} - \LXX{1}{3}{2}
\end{gather*}
of the ideal of relations of our operad
(note that permutations of relations \eqref{eq:DAPLI} and \eqref{eq:DAPLII} each give three different relations, while permutations of relation \eqref{eq:DAPLIII} give, modulo already written relations, just two different relations).

It turns out that for the reverse path-lexicographic ordering corresponding to the order of the corolla generators $\Zsmall{1}{2}>\Xsmall{1}{2}$ \cite{BremnerDotsenko}, the corresponding operad has a finite Gr\"obner basis
\begin{gather*}
\RZX{1}{2}{3} - \LXZ{1}{2}{3},\quad
\RXX{1}{2}{3} + \LXX{1}{2}{3} - \LXZ{1}{2}{3} - \LZZ{1}{3}{2},\\
\RXZ{1}{2}{3} + \LXX{1}{2}{3} - \LXZ{1}{2}{3} - \LZZ{1}{3}{2},\quad
\RZZ{1}{2}{3} - \LXZ{1}{2}{3} - \LZZ{1}{3}{2} + \LZZ{1}{2}{3},\\
\LXX{1}{3}{2} - \LXX{1}{2}{3},\quad
\LXZ{1}{3}{2} - \LXZ{1}{2}{3} - \LZZ{1}{3}{2} + \LZZ{1}{2}{3},\\
\LZX{1}{3}{2} - \LZZ{1}{3}{2},\quad
\LZX{1}{2}{3} - \LZZ{1}{2}{3},\\
\LZZZ{1}{4}{2}{3} - \LZZZ{1}{2}{4}{3},\quad
\LZZZ{1}{3}{2}{4} - \LZZZ{1}{2}{3}{4},\quad
\LZZZ{1}{4}{3}{2} - \LZZZ{1}{3}{4}{2},\\
\LXZZ{1}{3}{2}{4} - \LXZZ{1}{2}{3}{4} + \LZZZ{1}{3}{4}{2} - \LZZZ{1}{2}{4}{3},\quad
\LXZZ{1}{4}{2}{3} - \LXZZ{1}{2}{3}{4} + \LZZZ{1}{3}{4}{2} - \LZZZ{1}{2}{4}{3},\\
\LXXZ{1}{2}{3}{4} - \LZZZ{1}{3}{4}{2},
\end{gather*}
where the monomials are written in the decreasing order. This can be checked by a lengthy direct computation by hand or via the computer program \cite{DH}.

The leading terms of the first four elements of this Gr\"obner basis contain all the right combs of arity three, and hence all normal forms with respect to this Gröbner basis are left combs. Moreover, the leading terms of the seventh and the eighth elements imply that on the path from the root to the leftmost leaf of a left comb normal form all corollas $\Zsmall{}{}$ appear before all corollas $\Xsmall{}{}$. The leading terms of the fifth, the sixth, and the seventh elements guarantee that the only inversions in the planar permutation of leaves of our left comb can happen for leaves that are adjacent to corollas $\Xsmall{}{}$, and moreover, the leading terms of all but the last elements of arity four in our Gröbner basis ensure that such an inversion may only happen if the corresponding corollas $\Xsmall{}{}$ are at the root of the normal form. Finally, the last element of our Gröbner basis eliminates all among the remaining trees containing at least two copies of the corolla $\Zsmall{}{}$ and at least one copy of the corolla $\Xsmall{}{}$.

Overall, we obtain the following left comb normal forms: one with all corollas $\Zsmall{}{}$ and the unique increasing labelling of leaves, one with all corollas $\Xsmall{}{}$ except for one corolla $\Zsmall{}{}$ at the root, and the unique increasing labelling of leaves, and $n-1$ different ones with all corollas $\Xsmall{}{}$, and a labelling of leaves that increases everywhere except, possibly, for the rightmost child of the root, that is, $1,2,\ldots,i-1,i+1,\ldots,n,i$ with $2\le i\le n$. Thus, $\dim\AntiPLdual(n)=1+1+(n-1)=n+1$, as required.
\end{proof}

\subsection{Non-Koszulness of the anti-pre-Lie operad}
\label{sec:non-koszul}

The main purpose of this section is to record the following result.

\begin{theorem}
\label{thm:not-koszul}
The operad $\AntiPL$ is not Koszul.
\end{theorem}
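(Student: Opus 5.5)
The plan is to apply the Ginzburg--Kapranov generating series criterion \eqref{eq:GKtest} to the pair $\AntiPL$, $\AntiPLdual$, arguing by contradiction. If $\AntiPL$ were Koszul, then $f_{\AntiPLdual}\bigl(-f_{\AntiPL}(-t)\bigr)=t$ would hold as an identity of formal power series. I will show that some coefficient of the left-hand side is nonzero in degree at least $2$. Both series are available from earlier results. For $\AntiPLdual$ we have $\dim\AntiPLdual(1)=1$ and $\dim\AntiPLdual(2)=2$, and $\dim\AntiPLdual(n)=n+1$ for $n\ge3$ by the theorem of Section~\ref{sec:Grobner}. Hence
\[
f_{\AntiPLdual}(u)=(u+1)e^{u}-1-u-\tfrac{u^2}{2}=u+u^2+\tfrac{2}{3}u^3+\tfrac{5}{24}u^4+\tfrac{1}{20}u^5+\tfrac{7}{720}u^6+\tfrac{8}{5040}u^7+\cdots
\]
For $\AntiPL$ I only know the truncation $1,2,8,45,304,2283,18940$ of the dimensions, from Proposition~\ref{prop:dimensions}. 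Since the coefficient of $t^n$ in the composite depends only on $\dim\AntiPL(k)$ for $k\le n$, this truncation suffices to test the identity up to $t^7$.

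First I set $g(t)=-f_{\AntiPL}(-t)=t-t^2+\tfrac43t^3-\tfrac{15}{8}t^4+\tfrac{38}{15}t^5-\tfrac{2283}{720}t^6+\tfrac{18940}{5040}t^7+\cdots$. Next I expand the powers $g^k$ for $k\le7$ modulo $t^8$, and then collect the coefficients of $f_{\AntiPLdual}(g(t))$ degree by degree. A preliminary check shows that the coefficients of $t^2,t^3,t^4$ and $t^5$ all vanish. For instance, in degree $4$ one gets $-\tfrac{15}{8}+\tfrac{11}{3}-2+\tfrac{5}{24}=0$, and in degree $5$ one gets $\tfrac{38}{15}-\tfrac{77}{12}+\tfrac{14}{3}-\tfrac56+\tfrac1{20}=0$. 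So the low-arity data are consistent with Koszulness, and the obstruction must appear in degree $6$ or $7$.

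The main step is therefore the exact rational computation of the coefficients of $t^6$ and $t^7$ in $f_{\AntiPLdual}(g(t))$. These involve the contributions of $g^2,\dots,g^6$ (respectively $g^2,\dots,g^7$) weighted by $1,\tfrac23,\tfrac5{24},\tfrac1{20},\tfrac7{720},\tfrac{8}{5040}$. I expect at least one of them to be nonzero, most likely the $t^6$ coefficient, since $\dim\AntiPL(6)=2283$ is already the first place where the failure of injectivity of $\kappa$ forced by the missing $S^{2,1^4}$ shows up. Once a nonzero coefficient is found, \eqref{eq:GKtest} fails and $\AntiPL$ is not Koszul.

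Because the same identity \eqref{eq:GKtest} is symmetric in $\calP$ and $\calP^!$ for quadratic operads, and Koszulness of $\calP$ is equivalent to that of $\calP^!$, the same computation also shows that $\AntiPLdual$ is not Koszul. The only real obstacle is bookkeeping in the degree $6$ and $7$ coefficients. To guard against arithmetic slips, I will cross-check the computation by instead computing the compositional inverse of $g$ up to $t^7$ and comparing it with $f_{\AntiPLdual}$ coefficient by coefficient. That comparison amounts to predicting $\dim\AntiPLdual(6)$ and $\dim\AntiPLdual(7)$ from Koszulness and contrasting the predictions with $7$ and $8$.
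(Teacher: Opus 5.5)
Your argument is correct, and the one number you left uncomputed does come out nonzero. The contributions of $g^2,\dots,g^6$ to the coefficient of $t^6$ in $f_{\AntiPLdual}(g(t))$ sum to $\frac{2135}{720}$, so that coefficient is $\frac{2135-2283}{720}=-\frac{37}{180}\neq 0$. Equivalently, Koszulness would force $\dim\AntiPL(6)=2135$ instead of $2283$; one step further, it would force $\dim\AntiPL(7)=10891$ instead of $18940$. This number is the entire content of the proof, so you should state it explicitly rather than write that you ``expect'' it to be nonzero.

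Your route differs from the paper's in its input. The paper never uses the dimensions of $\AntiPL$. It inverts $f_{\AntiPLdual}$ up to order $8$, using only the proven formula $\dim\AntiPLdual(n)=n+1$. It then observes that the resulting would-be series $f_{\AntiPL}$ has the negative coefficient $-\frac{13667}{5760}$ at $t^8$, which is impossible for any operad. Your version stops two arities earlier and needs less arithmetic. However, it rests on the value $\dim\AntiPL(6)=2283$ from Proposition~\ref{prop:dimensions}, which the paper obtains only by computer calculation. The paper's argument is independent of that data.

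Finally, your heuristic tying the expected failure at $t^6$ to the non-injectivity of $\kappa$ carries no weight. The map $\kappa$ is already non-injective in arity $5$ by Theorem~\ref{thm:lie-kernel}, and there, as you checked, the coefficient vanishes.
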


\begin{proof}
Suppose that $\AntiPL$ is Koszul. Then Equation \eqref{eq:GKtest} implies that
 \[
f_{\AntiPL^{!}}\bigl(-f_{\AntiPL}(-t)\bigr) = t,
 \]
so that the series $f_{\AntiPL^!}(t)$ and $-f_{\AntiPL}(-t)$ are compositional inverses of one another. Computing the compositional inverse of
 \[
f_{\AntiPL^!}(t)=t+t^2+\frac23t^3+\frac5{24}t^4+\frac{1}{20}t^5+\frac7{720}t^6+\frac{1}{630}t^7+\frac{1}{4480}t^8+O(t^9),
 \]
we obtain
 \[
 t - t^2 + \frac43t^3 - \frac{15}8t^4 + \frac{38}{15}t^5 - \frac{427}{144}t^6 + \frac{10891}{5040}t^7 + \frac{13667}{5760}t^8 + O(t^9),
 \]
thus predicting
 \[
f_{\AntiPL}(t)=t + t^2 + \frac43t^3 + \frac{15}8t^4 + \frac{38}{15}t^5 + \frac{427}{144}t^6 + \frac{10891}{5040}t^7 - \frac{13667}{5760}t^8 + O(t^9),
 \]
so there is no need to compare it with the dimensions of components of the anti-pre-Lie operad we computed: it predicts a strictly negative dimension of the eighth component, and hence cannot be correct. This contradiction concludes the proof.
\end{proof}

The following corollary shows that the finite quadratic-and-cubic Gröbner basis we found for the dual anti-pre-Lie operad is the best one can hope for.

\begin{corollary}
\label{cor:no-quadratic-gb}
Neither $\AntiPL$ nor $\AntiPLdual$ admits a quadratic Gr\"obner basis, in
any admissible ordering of shuffle tree monomials.
\end{corollary}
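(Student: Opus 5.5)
The plan is to use the standard fact that an operad with a quadratic Gröbner basis (for some admissible ordering of shuffle tree monomials) is Koszul; see \cite{DotsenkoKhoroshkin} or \cite[Theorem~8.3.1 and its shuffle version]{LodayVallette}, \cite{BremnerDotsenko}. Granting this, the corollary follows from two non-Koszulness statements.

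\emph{Step 1: $\AntiPL$.} By Theorem~\ref{thm:not-koszul}, $\AntiPL$ is not Koszul. Suppose $\AntiPL$ had a quadratic Gröbner basis in some admissible ordering. Then the associated shuffle operad would be Koszul, hence so would $\AntiPL$. This contradicts Theorem~\ref{thm:not-koszul}.

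\emph{Step 2: $\AntiPLdual$.} The Koszul property is symmetric under Koszul duality for quadratic operads with finite-dimensional generators: $\calP$ is Koszul if and only if $\calP^!$ is Koszul \cite[Proposition~7.4.8]{LodayVallette}. Moreover $(\AntiPLdual)^!=\AntiPL$. So by Theorem~\ref{thm:not-koszul} the operad $\AntiPLdual$ is not Koszul either, and the argument of Step~1 rules out a quadratic Gröbner basis for it in any admissible ordering.

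As a cross-check, there is a more elementary route. A quadratic Gröbner basis for $\calP$ in some ordering yields one for $\calP^!$ in the opposite ordering, with orthogonal leading terms. So it would suffice to treat one of the two operads. Also, the Gröbner basis exhibited in Section~\ref{sec:Grobner} contains cubic elements, which illustrates, for one ordering, what the corollary asserts for all orderings.

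The only delicate point is bookkeeping. The Gröbner-basis-implies-Koszul theorem must be used for shuffle operads, with admissible orderings in the sense of \cite{DotsenkoKhoroshkin}, and one must note that Koszulness of a symmetric operad is equivalent to Koszulness of its associated shuffle operad. No new computation is needed.
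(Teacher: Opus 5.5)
Your argument is correct and is essentially the paper's proof: a quadratic Gr\"obner basis (for the associated shuffle operad) forces the Koszul property. That contradicts Theorem~\ref{thm:not-koszul} directly for $\AntiPL$, and for $\AntiPLdual$ via invariance of Koszulness under Koszul duality. Your ``cross-check'' paragraph is not needed, and its claim about opposite orderings would need its own reference, but the main argument stands without it.
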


\begin{proof}
An operad admitting a quadratic Gr\"obner basis is Koszul
\cite{DotsenkoKhoroshkin,Hoffbeck}, and a quadratic operad is Koszul if
and only if its Koszul dual is; either statement would contradict
Theorem~\ref{thm:not-koszul}.
\end{proof}

\subsection{An $S_{n}$-equivariant basis of \texorpdfstring{$\AntiPLdual(n)$}{apln}}
\label{subsec:normal-form}

The normal forms for $\AntiPLdual(n)$ exhibited in Section \ref{sec:Grobner} are not immediately useful if one wishes to write formulas compatible with the symmetric group actions, such as formulas for deformation complexes and cohomology of anti-pre-Lie algebras similar to those in \cite{KLSX}. For that reason, we give another description of components of the dual anti-pre-Lie operad.

We first note that Relation \eqref{eq:DAPLII} states that the left multiplication operators
$L_{\star}(a)$ defined by the formula
$
L_{\star}(a)(b)=a\star b$
commute in the sense that $
L_{\star}(a)L_{\star}(b)
=
L_{\star}(b)L_{\star}(a)$.
Hence, for each $j\in\{1,\dots,n\}$, the \emph{right comb}
\[
\mathbf r_j
=
L_{\star}(x_{i_1})\cdots
L_{\star}(x_{i_{n-1}})(x_j)
=
x_{i_1}\star\bigl(
x_{i_2}\star\bigl(\cdots\star
(x_{i_{n-1}}\star x_j)\cdots\bigr)\bigr),
\]
where
$
\{i_1,\dots,i_{n-1}\}
=
\{1,\dots,n\}\setminus\{j\}$,
is independent of the ordering of $i_1,\dots,i_{n-1}$ and is therefore
well defined. There are $n$ such elements, and we already know that $\dim\AntiPLdual(n)=n+1$, so
this suggests adjoining one further element in each arity and then proving that the resulting
elements form a basis.

\begin{lemma}
    \label{lem:reductions}
    Let $(D,\star)$ be a dual anti-pre-Lie algebra.
    The anti-associator
    \[
    J(a,b,c)=(a\star b)\star c+a\star(b\star c)
    \]
    is totally symmetric in $a,b,c\in D$.
\end{lemma}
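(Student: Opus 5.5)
Since $S_3$ is generated by the transpositions $(1\,2)$ and $(1\,3)$ (or by $(1\,2)$ together with the $3$-cycle), it suffices to show that $J$ is invariant under swapping its first two arguments and under a cyclic rotation of its arguments.

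Cyclic invariance is immediate. Relation \eqref{eq:DAPLIII} reads literally
\[
J(a_1,a_2,a_3)=(a_1\star a_2)\star a_3+a_1\star(a_2\star a_3)=(a_2\star a_3)\star a_1+a_2\star(a_3\star a_1)=J(a_2,a_3,a_1),
\]
so $J$ is invariant under the cyclic permutation $(a,b,c)\mapsto(b,c,a)$, and hence under the whole cyclic group of order three.

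For the transposition of the first two arguments, compare $J(a,b,c)$ with $J(b,a,c)$ term by term. Relation \eqref{eq:DAPLI} gives $(a\star b)\star c=(b\star a)\star c$. Relation \eqref{eq:DAPLII} gives $a\star(b\star c)=b\star(a\star c)$. Adding these two equalities yields $J(a,b,c)=J(b,a,c)$.

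A transposition together with a $3$-cycle generates $S_3$, so $J$ is totally symmetric. There is no real obstacle here. The only point to watch is that \eqref{eq:DAPLIII} is exactly the cyclic invariance, and is not to be confused with a transposition; both symmetries come directly from the defining relations of Theorem~\ref{thm:koszul-dual}.
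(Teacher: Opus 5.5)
Your proof is correct and is the same as the paper's. You combine \eqref{eq:DAPLI} and \eqref{eq:DAPLII} to get invariance under $a\leftrightarrow b$, read \eqref{eq:DAPLIII} as the cyclic invariance $J(a,b,c)=J(b,c,a)$, and conclude because a transposition and a $3$-cycle generate $S_3$.
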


\begin{proof}
    By \eqref{eq:DAPLI} and \eqref{eq:DAPLII}, we have $J(a,b,c)\equiv J(b,a,c)$,
    so $J$ is invariant under the transposition $a\leftrightarrow b$.
    Relation \eqref{eq:DAPLIII} gives $J(a,b,c)\equiv J(b,c,a)$,
    so $J$ is also invariant under the cyclic permutation
    $(a,b,c)\mapsto(b,c,a)$. Since a transposition and a $3$-cycle generate
    $S_3$, the expression $J(a,b,c)$ is totally symmetric in $a,b,c$.
\end{proof}

Inspired by this lemma and the above construction of right comb elements, we shall prove the following result.

\begin{proposition}\label{prop:Theta}
For $n\geq3$, choose a partition $\{1,\ldots,n\}=\{p,q,r\}\sqcup \{i_1,\dots,i_{n-3}\}$, and define
    \[
\Theta_n
    :=
    L_{\star}(x_{i_1})\cdots
    L_{\star}(x_{i_{n-3}})
    \bigl(J(x_p,x_q,x_r)\bigr)
    \in\AntiPLdual(n).
    \]
The element $\Theta_n$ is independent of either the choice of the
three-element subset $\{p,q,r\}$ or the ordering of the remaining
elements $i_1,\dots,i_{n-3}$.
\end{proposition}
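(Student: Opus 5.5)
The plan is to separate the two independence claims and reduce everything to one identity in arity four.

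Independence of the ordering of $i_1,\dots,i_{n-3}$ is immediate. Relation \eqref{eq:DAPLII} says that $L_\star(a)L_\star(b)=L_\star(b)L_\star(a)$ in $\AntiPLdual$. Hence any permutation of the outer operators $L_\star(x_{i_1}),\dots,L_\star(x_{i_{n-3}})$ gives the same element.

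For independence of the triple, Lemma~\ref{lem:reductions} already shows that $\Theta_n$ depends only on the set $\{p,q,r\}$, not on its ordering. Any two $3$-subsets of $\{1,\dots,n\}$ are connected by a sequence of moves exchanging one element of the triple with one outside element. So it suffices to treat a single exchange, say $r\leftrightarrow i_k$. Using the commutation just established, we move $L_\star(x_{i_k})$ to the innermost position. Then
\[
\Theta_n=L_\star(x_{i_1})\cdots\widehat{L_\star(x_{i_k})}\cdots L_\star(x_{i_{n-3}})\bigl(x_{i_k}\star J(x_p,x_q,x_r)\bigr).
\]
Since this is an operadic composition of the element $x_{i_k}\star J(x_p,x_q,x_r)$ with binary operations, it suffices to prove the arity-four identity
\[
a\star J(b,c,d)=d\star J(b,c,a)\qquad\text{in }\AntiPLdual(4).
\]
Together with total symmetry of $J$, this identity shows that $a\star J(b,c,d)$ is totally symmetric in $a,b,c,d$. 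The case $n=3$ is just Lemma~\ref{lem:reductions}, and general $n$ then follows by the reduction above; no induction is needed beyond this.

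The main obstacle is the arity-four identity. I would first attempt a direct manipulation. Put $y=b\star c$ and expand
\[
a\star J(b,c,d)=a\star(y\star d)+a\star(b\star(c\star d)).
\]
For the first summand, write $a\star(y\star d)=J(a,y,d)-(a\star y)\star d$, and note that $J(a,y,d)=J(d,y,a)$ is already symmetric under $a\leftrightarrow d$. For the second summand, use \eqref{eq:DAPLII}: the right comb $a\star(b\star(c\star d))$ is symmetric in $a,b,c$. Next, rewrite $(a\star(b\star c))\star d$ via $a\star(b\star c)=J(a,b,c)-(a\star b)\star c$, and similarly for the mirror terms. Then use \eqref{eq:DAPLI} and the symmetry of $J$ applied to composite arguments, such as $J(a\star b,c,d)$ and $J(a,b,c\star d)$, to match the $a\leftrightarrow d$ swapped expression term by term.

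If the hand bookkeeping becomes unwieldy, I would fall back on the Gr\"obner basis of Section~\ref{sec:Grobner}. I would rewrite both sides of $a\star J(b,c,d)-d\star J(b,c,a)$ to left-comb normal forms using the listed basis elements, and check that the difference reduces to zero. This is a finite computation in a space of dimension $5$.
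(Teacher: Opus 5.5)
Your reduction is correct and coincides with the paper's. Ordering independence comes from \eqref{eq:DAPLII}. Independence of the triple is reduced, via single exchanges and commuting left multiplications, to the arity-four identity $a\star J(b,c,d)=d\star J(b,c,a)$, which is \eqref{eq:theta-symmetric} up to relabelling.

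The gap is that you never prove this identity, even though you call it the main obstacle and it carries the whole content of the proposition. Your direct manipulation stops at ``match term by term''. The Gr\"obner-basis fallback is an unexecuted computation. It would be acceptable in principle, since reducing the difference to zero by elements of the ideal is a certificate, but it is not a proof until it is done.

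The ingredients you list do suffice, and they assemble in two lines. First, by \eqref{eq:DAPLII} the operator $L_\star(a)$ commutes with $L_\star(b\star c)$, $L_\star(b)$ and $L_\star(c)$, so
\[
a\star J(b,c,d)=(b\star c)\star(a\star d)+b\star\bigl(c\star(a\star d)\bigr)=J(b,c,a\star d).
\]
Second, total symmetry of $J$ (Lemma~\ref{lem:reductions}) moves the composite argument to the first slot:
\[
J(b,c,a\star d)=J(a\star d,b,c)=\bigl((a\star d)\star b\bigr)\star c+(a\star d)\star(b\star c).
\]
Then \eqref{eq:DAPLI}, in the form $(a\star d)\star x=(d\star a)\star x$, replaces $a\star d$ by $d\star a$ in both terms. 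Reversing the two steps gives $J(d\star a,b,c)=J(b,c,d\star a)=d\star J(b,c,a)$.

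Your own expansion, if pushed through, arrives at exactly this point. Rewrite $(a\star y)\star d=(y\star a)\star d=J(y,a,d)-y\star(a\star d)$, use $J(y,a,d)=J(y,d,a)$, and commute left multiplications in the right combs. The difference $a\star J(b,c,d)-d\star J(b,c,a)$ then collapses to $J(b,c,a\star d)-J(b,c,d\star a)$. The first step above simply gets there directly.
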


\begin{proof}
First, we note that since left multiplication operators commute, this element is indeed independent of the ordering of $i_1,\dots,i_{n-3}$.
To prove its independence of the choice of $\{p,q,r\}$, it is enough to prove that the identity
\begin{equation}\label{eq:theta-symmetric}
 c \star J(a,b,d) = d \star J(a,b,c)
\end{equation}
holds. Indeed, this allows us to exchange one of the elements of $\{p,q,r\}$ with one of the elements of $\{i_1,\dots,i_{n-3}\}$, and iterations of such exchanges can connect any two choices of $\{p,q,r\}$.

To prove that \eqref{eq:theta-symmetric} holds, we argue as
follows. First we remark that, since 
the left multiplication operators commute, we
have
\begin{equation}\label{eq:J-left-factor}
\begin{aligned}
J(a,b,c\star d)
 &=(a\star b)\star(c\star d)+a\star\bigl(b\star(c\star d)\bigr)\\
 &=c\star\bigl((a\star b)\star d\bigr)
   +c\star\bigl(a\star(b\star d)\bigr)\\
 &=c\star J(a,b,d).
\end{aligned}
\end{equation}

Furthermore, total symmetry of $J$ and axioms of dual anti-pre-Lie algebra immediately imply
\begin{align*}
J(a,b,c\star d)&= J( c\star d, a,b)=\big( (c\star d)\star a \big)\star b+(c\star d)\star (a\star b) \\
&= \big( (d\star c)\star a \big)\star b+(d\star c)\star (a\star b)\\
&= J(a,b,d\star c).
\end{align*}
It follows that
    \[
 c \star J(a,b,d) = d \star J(a,b,c),
  \]
as required. This completes the proof.
\end{proof}

The main result of this section is the following theorem.

\begin{theorem}
For each $n\ge 3$, the $n+1$ elements $\mathbf r_1,\ldots,\mathbf r_n,\Theta_n$ form a basis of $\AntiPLdual(n)$.
\end{theorem}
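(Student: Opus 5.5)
Since the Gröbner basis computation already gives $\dim\AntiPLdual(n)=n+1$ for $n\ge 3$, it suffices to prove one of two things: that the $n+1$ elements $\mathbf r_1,\ldots,\mathbf r_n,\Theta_n$ span $\AntiPLdual(n)$, or that they are linearly independent. I would prove spanning, by induction on $n$, since this uses only the identities already established. These are the commutation of left multiplications, relation \eqref{eq:DAPLI}, the total symmetry of $J$ from Lemma~\ref{lem:reductions}, and the identities \eqref{eq:J-left-factor} and \eqref{eq:theta-symmetric} from the proof of Proposition~\ref{prop:Theta}.

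\emph{Base case $n=3$.} The space $\AntiPLdual(3)$ is $4$-dimensional and spanned by the classes $\ell_1,\ell_2,\ell_3$ of left combs (indexed by the last input) and the classes $\eta_1,\eta_2,\eta_3$ of right combs, with $\ell_a+\eta_a$ independent of $a$. Here $\mathbf r_j=\eta_j$ and $\Theta_3=\ell_a+\eta_a$, so $\ell_a=\Theta_3-\mathbf r_a$, and the four elements span.

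\emph{Inductive step.} Every monomial of arity $n$ has the form $u\star v$. The case $u=x_i$ is immediate: by induction $v$ is a combination of the arity-$(n-1)$ elements $\mathbf r_j$ and $\Theta_{n-1}$ on the remaining variables, and $x_i\star\mathbf r_j=\mathbf r_j$, $x_i\star\Theta_{n-1}=\Theta_n$.

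The main work is the case where $u$ has arity at least $2$. Here I would reduce via $(a\star b)\star c=J(a,b,c)-a\star(b\star c)$ with $u=a\star b$. The second term $a\star(b\star c)$ lies in the previously handled situation if $a$ is a variable; otherwise I would recurse on the arity of the left factor, so the reduction terminates by induction on the size of the leftmost subtree. For the first term, I need to show that $J(a,b,c)$, for arbitrary monomials $a,b,c$, lies in the span. Using total symmetry of $J$, I may assume $c$ is the factor I want to expand. If $c=c'\star c''$, then \eqref{eq:J-left-factor} gives $J(a,b,c)=c'\star J(a,b,c'')$, which again puts a product on the left, so the argument recurses. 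Pushing every non-variable argument out this way by repeated use of symmetry and \eqref{eq:J-left-factor}, I would reduce to the case where all three arguments are variables, or where arguments are products whose left factor is a non-variable.

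\emph{Expected obstacle.} The main obstacle is the case where the left factor is not a variable, for example $J(a,b,(c\star d)\star e)$, which I expect to require a nested induction. For such terms I would use \eqref{eq:DAPLI} and \eqref{eq:J-left-factor} to write $J(a,b,c\star d)=J(a,b,d\star c)$, and expand $(c\star d)\star e$ itself via $J$, so that the terms become $J$'s of smaller total complexity. A cleaner alternative, which I would try first, is to show directly that every element $L_\star(y_1)\cdots L_\star(y_k)J(\cdot,\cdot,\cdot)$ with monomial arguments equals a scalar multiple of $\Theta_n$ plus right combs, by induction on $n$ using the already-established spanning in lower arity for the arguments of $J$. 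That is, I would substitute the inductive expressions ($\mathbf r$'s and $\Theta$'s) into the arguments of $J$ and compute $J(x_p,x_q,\mathbf r_j)$ and $J(x_p,x_q,\Theta_m)$ explicitly via \eqref{eq:J-left-factor}. Once spanning is established, the dimension count from the Gröbner basis theorem yields linear independence.
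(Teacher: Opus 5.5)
Your base case and the reduction $(a\star b)\star c=J(a,b,c)-a\star(b\star c)$ are fine. Deducing independence from the Gröbner dimension count would also be fine. However, the step that carries the whole spanning argument is left unresolved.

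Since \eqref{eq:J-left-factor} holds for an arbitrary left factor $c'$, iterating it always reduces the three arguments of $J$ to single variables. The real problem is the extracted multipliers, that is, elements $L_\star(U_1)\cdots L_\star(U_k)J(x_p,x_q,x_r)$ with composite monomials $U_i$. Your recursion that ``puts a product on the left'' has no decreasing measure here, because an extracted $U_i$ may have larger arity than the left factor $u$ you started from.

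Neither suggested fix closes this. The first is too vague to check. The second fails as described. Compute $J(x_p,x_q,\Theta_m)$ with \eqref{eq:J-left-factor} alone: after pulling out the variable multipliers of $\Theta_m$ and expanding its inner $J(x_a,x_b,x_c)$, you get
\[
J\bigl(x_p,x_q,(x_a\star x_b)\star x_c\bigr)=(x_a\star x_b)\star J(x_p,x_q,x_c).
\]
This is the same composite-multiplier situation. Rewriting it via $J$ only returns $J\bigl(x_a,x_b,J(x_p,x_q,x_c)\bigr)$, so you go in a circle.

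The missing idea is that \eqref{eq:theta-symmetric} is an identity for arbitrary elements, so it may be applied with a product in place of a variable. Together with \eqref{eq:J-left-factor} and commutativity of left multiplications, it gives
\begin{align*}
(a\star b)\star J(c,d,e)&=e\star J(c,d,a\star b)=e\star\bigl(a\star J(c,d,b)\bigr)\\
&=a\star\bigl(e\star J(c,d,b)\bigr)=a\star\bigl(b\star J(c,d,e)\bigr).
\end{align*}
So every composite left multiplier in front of a $J$-term splits into multiplications by single variables, even when other multipliers stand in between, since left multiplications commute. This yields $J(U,V,W)=\Theta_n$ for all monomials $U,V,W$ on disjoint variables. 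That is exactly what the paper proves, and it completes your induction; no substitution of $\mathbf r$'s and $\Theta$'s into the arguments of $J$ is needed. You listed \eqref{eq:theta-symmetric} among your tools but never used it at the point where it is required.

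Once spanning is repaired, deducing independence from $\dim\AntiPLdual(n)=n+1$ is legitimate, but it rests on the computer-verified Gröbner basis. The paper instead exhibits explicit functionals that vanish on the relation ideal. These are $\phi_\Theta$, the parity of $w(m)=\operatorname{ar}(m)-1-k(m)$, and the $\phi_j$, signed indicators of the last leaf. This makes the theorem self-contained and gives an independent proof of the dimension formula.
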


We note that these elements are not obviously related to the
normal forms obtained by the Gröbner basis argument, so we
give an independent proof.

\begin{proof}
We first prove the spanning property by induction on $n$.
For $n=3$, it follows from
$(x_i\star x_j)\star x_k=J(x_i,x_j,x_k)-x_i\star(x_j\star x_k)$.
Let $n\geq4$ and write a multilinear monomial as $m=m_1\star m_2$.
If $m_1$ has arity one, apply the induction hypothesis to $m_2$;
left multiplication by the remaining variable sends each candidate
basis element to a candidate basis element of arity $n$.
If $m_1=m'_1\star m''_1$, then
\[
m=J(m'_1,m''_1,m_2)-m'_1\star(m''_1\star m_2).
\]
The second term has a left factor of smaller arity, so an inner induction
on that arity reduces the proof to the first term.

We explain the reduction of this $J$-term. Equations
 \eqref{eq:theta-symmetric} and \eqref{eq:J-left-factor}, together with
the commutativity of  the left multiplication operators, imply
\begin{align*}
(a\star b)\star J(c,d,e)&
  =e\star J(c,d,a\star b) =e\star\bigl(a\star J(c,d,b)\bigr) =a\star\bigl(e\star J(c,d,b)\bigr)\\
  & =a\star\bigl(b\star J(c,d,e)\bigr).
\end{align*}
By symmetry of $J$, \eqref{eq:J-left-factor} can be applied to a
product in any of its three inputs. Iterating it reduces the
inputs to single variables. The displayed calculation splits every
remaining composite left multiplier into multiplications by
individual variables; the left multiplication operators
commute, so this splitting also applies when other left
multipliers occur between it and $J$. Thus, for monomials $U,V,W$
on disjoint sets of variables with total arity $n$, the element
$J(U,V,W)$ is a product of left multiplications by individual
variables applied to $J$ of three variables. It equals $\Theta_n$
by Proposition~\ref{prop:Theta}. This completes the spanning
argument.

To prove the linear independence, we shall exhibit $n+1$ linear functionals on $\Free(E)(n)$ that vanish on the arity $n$ component of the ideal generated by the identities of the dual anti-pre-Lie operad. For that, we introduce two combinatorial invariants of a monomial that will be useful for us. For a multilinear monomial $m$ of arity $n$ obtained by bracketing of arguments $x_{i_1},\ldots,x_{i_n}$ (in this order), let $\rl(m)=i_n$ denote the label of its last argument, and let $k(m)$ be the depth of that argument in $m$. Set
 \[
  w(m) = \operatorname{ar}(m) - 1 - k(m) \in \mathbb{Z}_{\geq 0},
  \quad
  \varepsilon(m) = w(m) \bmod 2 \in \{0,1\}.
 \]
For instance, a right comb $\mathbf r_j$ of arity $n$ has $k=n-1$, hence $w=0$ and
$\varepsilon=0$.

We define linear functions on $\Free(E)(n)$ by their values on the monomial basis:
\[
  \phi_{\Theta}(m) = \varepsilon(m),
  \quad
  \phi_{j}(m) = \bigl(1 - 2\varepsilon(m)\bigr)\,\delta_{j,\,\rl(m)}
  \quad (j = 1, \dots, n),
\]
where $\delta$ is the Kronecker delta; thus $\phi_{j}(m) = \pm 1$ if
$x_{j}$ is the rightmost leaf of $m$, the sign being $+$ or $-$ according
as $\varepsilon(m)$ is $0$ or $1$, and $\phi_{j}(m) = 0$ otherwise.

Let us show that these linear functions are linearly independent. If we have a dependency
 \[
 c_1\phi_1+\cdots+c_n\phi_n+c_{\Theta}\phi_{\Theta}=0,
 \]
we may evaluate it on a right comb $\mathbf{r}_i$. Since $\mathbf{r}_{i}$ has $\varepsilon = 0$ and $\rl = i$, we have
$\phi_{\Theta}(\mathbf{r}_{i}) = 0$ and $\phi_{j}(\mathbf{r}_{i}) = \delta_{ij}$, which proves $c_i=0$ for $i=1,\ldots,n$.
It remains to note that $\phi_{\Theta}$ is clearly a nonzero linear function, so $c_{\Theta}$ vanishes as well.

To complete the proof, we show that the linear functions vanish on the
arity-$n$ part of the ideal generated by
\eqref{eq:DAPLI}--\eqref{eq:DAPLIII}. This ideal is spanned by expressions
$C[\rho]$, where $\rho$ is obtained by substituting multilinear monomials
$u,v,z$ into a defining relation, and $C[-]$ is a multilinear monomial
with one distinguished input slot. The variables in these substitutions
are disjoint. If $C[-]$ has arity $q$, counting the distinguished slot,
and $\rho$ has arity $p$, then $n=q+p-1$.
We use the same formulas for $\phi_\Theta$ and $\phi_j$ on monomials
in any subset of the variables, with $\phi_j=0$ when $x_j$ is absent.

Note that the coefficients of monomials in $\rho$ are precisely the coefficients of monomials in the corresponding relation of $\AntiPLdual$, and hence they add up to $0$.

\emph{Step 1: $C[-]=[-]$.} In this case, we need to prove that $\phi_{\Theta}(\rho) = \phi_{j}(\rho) = 0$. For \eqref{eq:DAPLI}, the two monomials $(uv)z$ and $(vu)z$ have the same rightmost leaf $\rl(z)$, and the same $w$, namely $\operatorname{ar}(u)+\operatorname{ar}(v)-1+w(z)$; hence both of these linear functions
take equal values on them and kill the difference. For \eqref{eq:DAPLII}, the monomials $u(vz)$ and $v(uz)$ both have rightmost leaf $\rl(z)$ and
$w = (\operatorname{ar}(u)-1) + (\operatorname{ar}(v)-1) + w(z)$, which is symmetric in $u,v$; again the difference is killed.

For \eqref{eq:DAPLIII}, written $(uv)z + u(vz) - (vz)u - v(zu)$, the first two monomials both have rightmost leaf $\rl(z)$, with
\[
  w\bigl((uv)z\bigr) = \operatorname{ar}(u)+\operatorname{ar}(v)-1+w(z),
  \quad
  w\bigl(u(vz)\bigr) = \operatorname{ar}(u)+\operatorname{ar}(v)-2+w(z),
\]
which differ by $1$; so their $\varepsilon$-values are $0$ and $1$ in some
order and sum to $1$. The last two monomials both have rightmost leaf
$\rl(u)$ and, by the same computation, $\varepsilon$-values summing to $1$.
Therefore
$\phi_{\Theta}(\rho) = 1 - 1 = 0$, while
\[
  \phi_{j}(\rho)
  = \bigl(2 - 2\cdot 1\bigr)\delta_{j,\rl(z)}
    - \bigl(2 - 2\cdot 1\bigr)\delta_{j,\rl(u)}
  = 0 .
\]

\emph{Step 2: an arbitrary context $C[-]$.}
If the distinguished slot is not the rightmost leaf of $C[-]$, every
monomial in $C[\rho]$ has the same rightmost leaf and the same depth of
that leaf. Its arity is also fixed. The functionals are therefore
constant on these monomials, and the sum of the coefficients of $\rho$
is zero, so all the functionals vanish on $C[\rho]$.

Suppose instead that the distinguished slot is the rightmost leaf.
For every monomial $v$ occurring in $\rho$, the rightmost leaf of $C[v]$
is the rightmost leaf of $v$, and
\[
k(C[v])=k(C)+k(v),\quad
w(C[v])=w(C)+w(v),
\]
where the hole is counted as a leaf in $k(C)$ and $w(C)$.
Set $c=w(C)$ and $\bar c=c\bmod2\in\{0,1\}$. As an equality of integers,
\[
\varepsilon(C[v])=\bar c+(1-2\bar c)\varepsilon(v).
\]
Consequently,
\[
\phi_\Theta(C[v])=\bar c+(1-2\bar c)\phi_\Theta(v),
\quad
\phi_j(C[v])=(-1)^c\phi_j(v).
\]
Writing $\rho=\sum_v a_vv$, we have $\sum_v a_v=0$, so
\begin{align*}
\phi_\Theta(C[\rho])
 &=\bar c\sum_v a_v+(1-2\bar c)\phi_\Theta(\rho)=0,\\
\phi_j(C[\rho])&=(-1)^c\phi_j(\rho)=0
\end{align*}
by Step~1. The functionals therefore descend to $\AntiPLdual(n)$,
which proves linear independence and completes the proof.
\end{proof}

\begin{corollary}
\label{cor:module}
For $n \geq 3$ there is an isomorphism of $S_{n}$-modules
\[
  \AntiPLdual(n) \;\cong\; \bk^{n} \oplus \bk,
\]
the first summand being the permutation module spanned by the
right combs $\mathbf{r}_{j}$,  and the second
summand being the trivial module spanned by $\Theta_n$.
\end{corollary}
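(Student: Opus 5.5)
The corollary should follow from the preceding theorem once we check that the span of the right combs and the span of $\Theta_n$ are each $S_n$-stable, with the stated actions. Since the theorem says $\mathbf r_1,\ldots,\mathbf r_n,\Theta_n$ is a basis, $\AntiPLdual(n)$ then splits as the direct sum $\operatorname{span}(\mathbf r_1,\ldots,\mathbf r_n)\oplus\bk\Theta_n$ of two submodules. So the plan is to compute the action of $S_n$ on each basis element.

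\emph{Right combs.} Take $g\in S_n$ and apply it to
\[
\mathbf r_j=L_\star(x_{i_1})\cdots L_\star(x_{i_{n-1}})(x_j).
\]
The result is $L_\star(x_{g(i_1)})\cdots L_\star(x_{g(i_{n-1})})(x_{g(j)})$. Here $\{g(i_1),\ldots,g(i_{n-1})\}$ is exactly the complement of $g(j)$. By relation \eqref{eq:DAPLII} the left multiplication operators commute, so the ordering of the multipliers does not matter, and this element equals $\mathbf r_{g(j)}$. Hence $g\cdot\mathbf r_j=\mathbf r_{g(j)}$, which is the permutation action on the standard basis of $\bk^n$. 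Because the $\mathbf r_j$ are linearly independent by the theorem, their span is isomorphic to the permutation module $\bk^n$.

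\emph{The element $\Theta_n$.} Applying $g$ to $\Theta_n$ gives $L_\star(x_{g(i_1)})\cdots L_\star(x_{g(i_{n-3})})\bigl(J(x_{g(p)},x_{g(q)},x_{g(r)})\bigr)$. By Lemma~\ref{lem:reductions}, $J$ is totally symmetric, so the order of its three arguments is irrelevant. This expression is therefore the element built from the partition $\{g(p),g(q),g(r)\}\sqcup\{g(i_1),\ldots,g(i_{n-3})\}$. Proposition~\ref{prop:Theta} says $\Theta_n$ does not depend on which partition we choose, so $g\cdot\Theta_n=\Theta_n$ and $\bk\Theta_n$ is the trivial module.

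There is essentially no obstacle here. The only point to be careful about is that the $S_n$-action on $\AntiPLdual(n)$ relabels the inputs while leaving the tree shape unchanged, so that it acts on these expressions as described above. It is also worth noting, as a sanity check, that this gives the character $\bk^n\oplus\bk\cong 2S^{n}\oplus S^{n-1,1}$.
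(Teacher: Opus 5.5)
Your proposal is correct and follows the paper's argument: the paper's proof is the one-line observation that $\sigma\cdot\mathbf r_j=\mathbf r_{\sigma(j)}$ and that $\Theta_n$ is $S_n$-invariant, combined with the basis theorem. You spell out why these two facts hold, using commuting left multiplications for the combs, and total symmetry of $J$ together with Proposition~\ref{prop:Theta} for $\Theta_n$.
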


\begin{proof}
Indeed, we have
$\sigma \cdot \mathbf{r}_{j} = \mathbf{r}_{\sigma(j)}$, and the element $\Theta_n$ is $S_{n}$-invariant.
\end{proof}

The equivariant basis also determines the Lie elements in the dual operad.

\begin{proposition}\label{prop:dual-metabelian}
The suboperad of $\AntiPLdual$ generated by
$[a,b]=a\star b-b\star a$ is isomorphic to the operad of metabelian
Lie algebras. In arity $n\geq 2$, its underlying $S_n$-module is
\[
  \left\{\sum_{j=1}^{n}c_j\mathbf r_j\;\middle|\;
                  \sum_{j=1}^{n}c_j=0\right\}.
\]
In particular, this component has dimension $n-1$. Here, in arity two, $\mathbf r_1=x_2\star x_1$ and
$\mathbf r_2=x_1\star x_2$.
\end{proposition}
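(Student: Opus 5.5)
The plan is to compute the image of every Lie monomial in the basis $\mathbf r_1,\dots,\mathbf r_n,\Theta_n$, identify the span of these images, and then compare with the metabelian operad $\Lie/([[a,b],[c,d]])$.

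\emph{Step 1: left-normed brackets are combinations of right combs.} In arity two, $[x_1,x_2]=\mathbf r_2-\mathbf r_1$. For a general bracket $[u,x_k]$ with $u$ of arity $n-1$ we have $[u,x_k]=u\star x_k-x_k\star u$. The term $x_k\star u$ is left multiplication by a variable, which sends $\mathbf r_j^{(n-1)}$ to $\mathbf r_j^{(n)}$ and $\Theta_{n-1}$ to $\Theta_n$. For $u\star x_k$ we will show by induction that $\mathbf r_j\star x_k$ is a right comb combination modulo $\Theta_n$. Write $\mathbf r_j=a\star m$ with $a$ a variable. Using $J$, we get $(a\star m)\star x_k=J(a,m,x_k)-a\star(m\star x_k)$. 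The first term equals $\Theta_n$ by the spanning argument, and the second is handled by induction on arity. The base case is $(a\star b)\star c=\Theta_3-\mathbf r_c$. By induction, $\mathbf r^{(n-1)}_j\star x_k\equiv(-1)^{n}\mathbf r_k\pmod{\Theta_n}$, with the sign tracked carefully. Consequently, the images of Lie brackets are combinations of the $\mathbf r_j$ plus multiples of $\Theta_n$.

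\emph{Step 2: the $\Theta_n$ and coefficient-sum components vanish.} Here I use the functionals from the proof of the basis theorem. First, $\sum_j\phi_j$ applied to a commutator $[u,v]$ is zero when the arities are such that signs agree; more robustly, I will verify that the image of each Lie monomial has coefficient sum zero and no $\Theta_n$ component. The cleanest route is to check that the functional $\phi_\Theta$ and the functional $\psi=\sum_j\phi_j+2\phi_\Theta$, which I expect to be the ``coefficient sum'' in the basis $(\mathbf r_j,\Theta_n)$ (to be recomputed), vanish on the image of every bracket. Both functionals are invariant under $S_n$. I will then argue inductively via the formula for $[u,v]$, using that $\Theta_n$ is symmetric and that the $\mathbf r_j$-coefficient sum of a commutator cancels. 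This is the main obstacle: the bookkeeping of $\Theta_n$ coefficients in $u\star v$ when $u$ is not a variable. I would first try to show $x\star\Theta=\Theta$ and $\Theta\star x=c\,\Theta$ and $\mathbf r_i\star\mathbf r_j$ formulas, then check directly that $[\mathbf r\text{-combination with zero sum},\,\cdot\,]$ has no $\Theta$ term.

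\emph{Step 3: surjectivity onto the stated space and identification.} Since $[\ldots[[x_1,x_2],x_3]\ldots,x_n]$ maps to $\pm\mathbf r_n-\pm\mathbf r_{n-1}$ plus similar terms, the $S_n$-orbit spans the zero-sum subspace. That subspace is isomorphic to $S^{n-1,1}$, which is irreducible, so a single nonzero element generates it. Finally, $[[a,b],[c,d]]\mapsto 0$. Indeed, $[a,b]$ maps to $\mathbf r_b-\mathbf r_a$, and the products of two zero-sum arity-two combinations cancel by (14) and the formula $\mathbf r_i\star\mathbf r_j$ computed in Step 2. Hence the map factors through the metabelian operad. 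Its arity-$n$ component is known to be $S^{n-1,1}$ of dimension $n-1$, and it surjects onto the image of the same dimension. The induced map is therefore an isomorphism.
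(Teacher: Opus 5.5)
There is a genuine gap, and it is the step you yourself flag as the ``main obstacle''. Step~2 is never carried out. Nothing in the proposal shows that the image of a Lie monomial has zero $\Theta_n$-coefficient and zero $\mathbf r$-coefficient sum, yet Step~3 and the final dimension count both depend on it. The auxiliary identities you plan to use are also unreliable. The expected formula $\Theta\star x=c\,\Theta$ is false: for instance $\Theta_3\star x_4=((x_1\star x_2)\star x_3)\star x_4+(x_1\star(x_2\star x_3))\star x_4=2\mathbf r_4$, because every monomial $m\star x_4$ with $m$ of arity three equals $\mathbf r_4$, as the functionals $\phi_j,\phi_\Theta$ show. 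The description in Step~3 is also off: $[\ldots[[x_1,x_2],x_3]\ldots,x_n]$ maps to $(-1)^n(\mathbf r_2-\mathbf r_1)$, with no $\mathbf r_n$ or $\mathbf r_{n-1}$. This is harmless for the irreducibility argument, but it shows the bookkeeping went astray. Finally, you never verify the Jacobi identity for $a\star b-b\star a$. Without it there is no morphism from $\Lie$ to factor through the metabelian operad.

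The missing idea is that relation \eqref{eq:DAPLI} says precisely $[a,b]\star c=0$. This settles everything at once. First, $[[a,b],[c,d]]=[a,b]\star[c,d]-[c,d]\star[a,b]=0$. Second, for every bracket $u$ one has $[x,u]=x\star u$, so by commutativity of left multiplications $\operatorname{ad}_{x_{k_1}}\cdots\operatorname{ad}_{x_{k_{n-2}}}[x_i,x_j]\mapsto\mathbf r_j-\mathbf r_i$, with no $\Theta_n$ ever appearing. Third, Jacobi follows from \eqref{eq:DAPLI} and \eqref{eq:DAPLII}. The paper then matches these images against the standard basis $c_{12},\dots,c_{1n}$ of the free metabelian Lie algebra and concludes by linear independence of the $\mathbf r_j$.

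Your Step~1 already contains this fact if you do not reduce modulo $\Theta_n$. The recursion $(a\star m)\star x_k=J(a,m,x_k)-a\star(m\star x_k)$, with $J(a,m,x_k)=\Theta_n$, is exact. It yields $\mathbf r^{(n-1)}_j\star x_k=(-1)^n\mathbf r_k+\epsilon_n\Theta_n$, where $\epsilon_n$ is $1$ for odd $n$ and $0$ for even $n$, so the result is independent of $j$. Hence any zero-sum combination $u$ of right combs satisfies $u\star x_k=0$ and $[u,x_k]=-x_k\star u$, and Step~2 becomes a one-line induction over left-normed brackets.

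Your functional route could also be completed. The functional $\psi=\sum_j\phi_j+2\phi_\Theta$ takes the value $1$ on every monomial, so it is the multiplicative total coefficient sum and kills all commutators. Moreover, $\phi_\Theta(u\star v)$ and $\phi_j(u\star v)$ depend on $u$ only through $\psi(u)$ and its arity, so $\psi(u)=0$ forces $u\star v=0$. As written, however, the proposal leaves the central step unproved.
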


\begin{proof}
Relation \eqref{eq:DAPLI} implies $[a,b]\star c=0$ for all $a,b,c$.
Consequently, the commutator satisfies the metabelian identity
$[[a,b],[c,d]]=0$. It also satisfies the Jacobi identity: indeed,
Relations \eqref{eq:DAPLI} and \eqref{eq:DAPLII} imply both
anti-pre-Lie identities, so the generating operation of
$\AntiPLdual$ is Lie-admissible. Thus, writing
\[
  \mathcal M=\Lie/\bigl([[x_1,x_2],[x_3,x_4]]\bigr),
\]
we obtain a surjective morphism from $\mathcal M$ onto the suboperad
under consideration.

For $n\ge2$, put
\[
c_{ij}
=\operatorname{ad}_{x_{k_1}}\cdots
\operatorname{ad}_{x_{k_{n-2}}}[x_i,x_j],
\qquad
\{k_1,\ldots,k_{n-2}\}
=\{1,\ldots,n\}\setminus\{i,j\},
\]
where $i\ne j$ and $\operatorname{ad}_x(y)=[x,y]$.
The order of the $k_\ell$ is immaterial, since the adjoint
operators commute on the derived algebra of a metabelian
Lie algebra.
By the standard basis theorem for free metabelian Lie algebras
\cite[\S 4.7.1]{MR4368864}, the elements
$c_{12},\ldots,c_{1n}$ form a basis of $\mathcal M(n)$.
Thus $\dim\mathcal M(n)=n-1$.

In $\AntiPLdual$, the identity $[a,b]\star c=0$ gives
\[
 [x,[a,b]]=x\star[a,b].
\]
Iterating this identity and using the commutativity of left
multiplications, we obtain
\[
 c_{ij}\longmapsto\mathbf r_j-\mathbf r_i.
\]
The elements $\mathbf r_2-\mathbf r_1,\ldots,
\mathbf r_n-\mathbf r_1$ are linearly independent by
Corollary~\ref{cor:module} for $n\geq 3$, and directly for $n=2$.
The above surjection is therefore an isomorphism in every arity
$n\geq 2$. It is also an isomorphism in arity one, where both spaces
are spanned by the operadic identity. The formula for the
$S_n$-module follows from $\sigma\mathbf r_j=\mathbf r_{\sigma(j)}$.
\end{proof}

\begin{remark}
	This result is parallel to the result of Mashurov and Sartayev
	\cite{MR4688832} on Lie elements in Perm-algebras:
	the suboperad of $\mathrm{Perm}$ generated by the commutator
	is isomorphic to the operad of metabelian Lie algebras.
	Recall that $\mathrm{Perm}$ is the Koszul dual of the pre-Lie operad.
\end{remark}

\subsection{Representations of dual anti-pre-Lie algebras}

Recall that for an algebraic structure defined by multilinear identities, there is a general approach going back to Eilenberg \cite{MR26647} to defining a \emph{representation} of an algebra $A$; this is equivalent to Beck modules \cite{MR1987896}. For a dual anti-pre-Lie algebra $D$, where an algebra only has one structure operation, this notion of a module becomes a vector space $V$ together with linear maps $l,r\colon D\to\mathrm{End}(V)$ which define on $D\oplus V$ a structure of a dual anti-pre-Lie algebra by the square-zero-extension formulas
\begin{equation}
\label{eq:semidirect-product}
(a+u)\bullet(b+v)=a\star b+l(a)v+r(b)u, \quad \forall\,a,b\in D,\ u,v\in V.
\end{equation}

Let us record what this definition means concretely.

\begin{proposition}
  \label{prop:semidirect}
Let $(D,\star)$ be a dual anti-pre-Lie algebra, let $V$ be a vector space and let $l,r\colon D\to\mathrm{End}(V)$ be linear maps. Formula \eqref{eq:semidirect-product} makes $D\oplus V$ a dual anti-pre-Lie algebra if and only if
the following
  identities hold for all $a,b\in D$ and $v\in V$:
  \begin{align}
    r(a)l(b)v &= r(a)r(b)v, \label{eq:rep1}\\
    l(a\star b)v &= l(b\star a)v, \label{eq:rep2}\\
    l(a)l(b)v &= l(b)l(a)v, \label{eq:rep3}\\
    l(a)r(b)v &= r(a\star b)v, \label{eq:rep4}\\
    l(a\star b)v+l(a)l(b)v
    &= r(a)l(b)v+l(b)r(a)v
    = r(a\star b)v+r(b)r(a)v. \label{eq:rep5}
  \end{align}
\end{proposition}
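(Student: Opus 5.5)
The plan is to expand the three defining identities \eqref{eq:DAPLI}--\eqref{eq:DAPLIII} for the product $\bullet$ on $D\oplus V$ given by \eqref{eq:semidirect-product}, and to sort the outcome by the number of arguments taken from $V$. Each identity is trilinear, so it holds on $D\oplus V$ if and only if it holds whenever each argument is taken either in $D$ or in $V$. Terms with all three arguments in $D$ hold because $(D,\star)$ is dual anti-pre-Lie. Terms with two or more arguments in $V$ vanish identically, since $V\bullet V=0$ and $V$ is an ideal on which $D$ acts linearly. It therefore remains to handle the cases with exactly one argument $v\in V$, placed in each of the three positions of each relation, and to compute the $V$-component. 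This gives nine linear conditions, which I will match with \eqref{eq:rep1}--\eqref{eq:rep5}.

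For \eqref{eq:DAPLI}, $(a_1\bullet a_2)\bullet a_3=(a_2\bullet a_1)\bullet a_3$, the cases are as follows.
\begin{itemize}
\item With $v$ in position $3$ we get $l(a_1\star a_2)v=l(a_2\star a_1)v$, which is \eqref{eq:rep2}.
\item With $v$ in position $1$ the left side is $r(a_3)l(a_2)v$ and the right side is $r(a_3)r(a_2)v$, which is \eqref{eq:rep1}.
\item Position $2$ gives the same condition after renaming.
\end{itemize}
For \eqref{eq:DAPLII}, $a_1\bullet(a_2\bullet a_3)=a_2\bullet(a_1\bullet a_3)$, the cases are these.
\begin{itemize}
\item With $v$ in position $3$ we get $l(a_1)l(a_2)v=l(a_2)l(a_1)v$, which is \eqref{eq:rep3}.
\item With $v$ in position $1$ we get $r(a_2\star a_3)v=l(a_2)r(a_3)v$, which is \eqref{eq:rep4} after renaming.
\item Position $2$ is symmetric.
\end{itemize}

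For \eqref{eq:DAPLIII}, $(a_1a_2)a_3+a_1(a_2a_3)=(a_2a_3)a_1+a_2(a_3a_1)$, the cases are these.
\begin{itemize}
\item With $v$ in position $3$ we get $l(a_1\star a_2)v+l(a_1)l(a_2)v=r(a_1)l(a_2)v+l(a_2)r(a_1)v$.
\item With $v$ in position $2$ we get $r(a_3)l(a_1)v+l(a_1)r(a_3)v=r(a_3\star a_1)v+r(a_1)r(a_3)v$.
\item With $v$ in position $1$ we get $r(a_3)r(a_2)v+r(a_2\star a_3)v=l(a_2\star a_3)v+l(a_2)l(a_3)v$.
\end{itemize}
After renaming variables, these three conditions say that the three expressions in \eqref{eq:rep5} are pairwise equal, so together they are equivalent to the two equalities in \eqref{eq:rep5}. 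In fact any two of them imply the third. The position-$1$ condition, with $(a_2,a_3)\mapsto(a,b)$, links the first and last expressions. The position-$3$ condition, with $(a_1,a_2)\mapsto(a,b)$, links the first and the middle. The position-$2$ condition, with $(a_3,a_1)\mapsto(a,b)$, links the middle and the last.

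Both implications follow from this case analysis. If $D\oplus V$ is dual anti-pre-Lie, each listed condition must hold, which gives \eqref{eq:rep1}--\eqref{eq:rep5}. Conversely, \eqref{eq:rep1}--\eqref{eq:rep5} give every mixed case, and the remaining cases hold automatically. The only real obstacle is bookkeeping. Care is needed with the orientation of $r$ (it acts as $u\bullet b=r(b)u$), so that nested terms such as $(v\bullet a_2)\bullet a_3=r(a_3)r(a_2)v$ and $a_2\bullet(v\bullet a_3)=l(a_2)r(a_3)v$ are read correctly. The renamings in \eqref{eq:DAPLIII} must also be checked so that each mixed case matches the stated form of \eqref{eq:rep5} exactly.
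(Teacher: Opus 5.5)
Your proposal is correct and follows the same route as the paper. The paper's proof expands \eqref{eq:DAPLI}--\eqref{eq:DAPLIII} for $\bullet$ on $D\oplus V$, observes that the pure-$D$ parts hold automatically, and reads off \eqref{eq:rep1}--\eqref{eq:rep5} from the $V$-components; you do the same, with an explicit case analysis whose computations all check out. One slip is cosmetic only: for \eqref{eq:DAPLI} with $v$ in position $1$, the left side is $r(a_3)r(a_2)v$ and the right side is $r(a_3)l(a_2)v$, not the other way round, and this does not change the resulting condition \eqref{eq:rep1}.
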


\begin{proof}
  Expand each of the relations \eqref{eq:DAPLI}, \eqref{eq:DAPLII}, \eqref{eq:DAPLIII} of
  Definition~\ref{def:dual-anti-prelie} for $\bullet$ on $D\oplus V$ and
  collect the components lying in $V$. Since $\bullet$ restricts to
  $\star$ on $D$, the components lying in $D$ hold automatically, and the
  $V$-components are, term by term, the identities
  \eqref{eq:rep1}--\eqref{eq:rep5}.
\end{proof}

For the dual anti-pre-Lie algebra structures produced by this proposition, we write $D\ltimes_{l,r}V=(D\oplus V,\bullet)$ and call this algebra the \emph{semidirect product} of $(D,\star)$ by $(l,r,V)$.

\begin{example}
  \label{ex:adjoint-rep}
  Let $(D,\star)$ be a dual anti-pre-Lie algebra. Then
  $(L_{\star},R_{\star},D)$ is a representation of $(D,\star)$, called
  the \emph{adjoint representation}.
\end{example}

\begin{proposition}
  \label{prop:dual-rep}
  Let $(D,\star)$ be a dual anti-pre-Lie algebra. If $(l,r,V)$ is a
  representation of $(D,\star)$, then so is
  $(l^{*},l^{*}-r^{*},V^{*})$, where $l^{*}\colon D\to\mathrm{End}(V^{*})$
  is given by
  \begin{equation}
    \label{eq:dual-map}
    \langle l^{*}(a)u^{*},v\rangle=-\langle u^{*},l(a)v\rangle,
    \quad \forall\,a\in D,\ u^{*}\in V^{*},\ v\in V,
  \end{equation}
  and $r^{*}$ is defined in the same way. In particular,
  $(L^{*}_{\star},L^{*}_{\star}-R^{*}_{\star},D^{*})$ is a representation
  of $(D,\star)$.
\end{proposition}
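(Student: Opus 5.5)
By Proposition~\ref{prop:semidirect}, it suffices to check that the maps $l'=l^{*}$ and $r'=l^{*}-r^{*}$ satisfy the five identities \eqref{eq:rep1}--\eqref{eq:rep5} on $V^{*}$. To verify them, pair each identity with an arbitrary $v\in V$ and use the transposition rule $\langle l^{*}(a)l^{*}(b)u^{*},v\rangle=\langle u^{*},l(b)l(a)v\rangle$, which holds because the two signs from \eqref{eq:dual-map} cancel. The same rule holds for mixed products of $l^*$ and $r^*$. After this translation, each identity for $(l',r')$ becomes an identity among the operators $l,r$ on $V$ with the order of composition reversed. We then need to show it is a consequence of \eqref{eq:rep1}--\eqref{eq:rep5} for $(l,r)$.

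We carry out the checks in increasing order of difficulty.

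\emph{Identity \eqref{eq:rep3}.} For $l'$ it follows immediately from \eqref{eq:rep3} for $l$.

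\emph{Identity \eqref{eq:rep2}.} For $l'$ it is the dual of \eqref{eq:rep2}; single operators only acquire a sign, which appears on both sides.

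\emph{Identity \eqref{eq:rep1}.} This reads $r'(a)l'(b)=r'(a)r'(b)$. Since $l'-r'=r^{*}$, it is equivalent to $r'(a)r^{*}(b)=0$, that is, $(l^{*}(a)-r^{*}(a))r^{*}(b)=0$. Dualizing, this becomes $r(b)l(a)=r(b)r(a)$, which is \eqref{eq:rep1} with the roles of $a$ and $b$ swapped.

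\emph{Identity \eqref{eq:rep4}.} Here the left-hand side is $l^{*}(a)(l^{*}(b)-r^{*}(b))$ and the right-hand side is $l^{*}(a\star b)-r^{*}(a\star b)$. Dualizing converts the target into
\[
l(b)l(a)-r(b)l(a)=-l(a\star b)+r(a\star b).
\]
We obtain this from \eqref{eq:rep4}, which gives $l(a)r(b)=r(a\star b)$, together with the first equality of \eqref{eq:rep5} and \eqref{eq:rep3}. When doing so, we must keep careful track of the arguments $a,b$, since dualizing reverses composition but not the product $a\star b$; we may also need \eqref{eq:rep2} to swap $a\star b$ and $b\star a$ inside $l$.

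\emph{Identity \eqref{eq:rep5}.} This consists of two equations, and substituting $r'=l^{*}-r^{*}$ expands each into several terms. After dualizing, we aim to express every resulting term of $l,r$ through the basis combinations appearing in \eqref{eq:rep5} for $(l,r)$, reducing with \eqref{eq:rep1} and \eqref{eq:rep4}. We expect this bookkeeping to be the main obstacle. We first verify the first equality, and then obtain the second by subtracting the analogous expansion. A useful guiding principle is that these identities should mirror the cyclic invariance already established in Proposition~\ref{prop:cyclic}: a pair $(l^*,l^*-r^*)$ corresponds to the invariance form $B(x\circ y,z)=B(y,x\circ z-z\circ x)$ transported to the Koszul dual.

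Finally, the particular case $(L^{*}_{\star},L^{*}_{\star}-R^{*}_{\star},D^{*})$ follows by applying the result to the adjoint representation of Example~\ref{ex:adjoint-rep}.
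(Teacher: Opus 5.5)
Your proposal is correct and takes the same approach as the paper. The paper also dualizes \eqref{eq:rep1}--\eqref{eq:rep5} for $(l^{*},l^{*}-r^{*})$ by transposition, writes out only \eqref{eq:rep1} (exactly as you do), and calls the rest analogous. The steps you leave open do close. The dual of \eqref{eq:rep4} follows from the first equality of \eqref{eq:rep5} with $a,b$ interchanged, combined with \eqref{eq:rep4} and \eqref{eq:rep2}; \eqref{eq:rep3} is not needed there. For the dual of \eqref{eq:rep5}, the first equality transposes to exactly the first equality of \eqref{eq:rep5} for $(l,r)$. The second reduces to it after rewriting $l(a)r(b)=r(a\star b)$, $r(a)r(b)=r(a)l(b)$ and $l(b)l(a)=l(a)l(b)$ via \eqref{eq:rep4}, \eqref{eq:rep1} and \eqref{eq:rep3}.
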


\begin{proof}
  For all $a,b\in D$, $u^{*}\in V^{*}$ and $v\in V$,
  \begin{align*}
    \bigl\langle
    (l^{*}-r^{*})(a)l^{*}(b)u^{*}
    -(l^{*}-r^{*})(a)(l^{*}-r^{*})(b)u^{*},\,v
    \bigr\rangle
    &=\bigl\langle (l^{*}-r^{*})(a)r^{*}(b)u^{*},\,v\bigr\rangle\\
    &=\bigl\langle u^{*},\,r(b)l(a)v-r(b)r(a)v\bigr\rangle\\
    &\overset{\eqref{eq:rep1}}{=}0,
  \end{align*}
  that is,
  \[
  (l^{*}-r^{*})(a)l^{*}(b)u^{*}
  =(l^{*}-r^{*})(a)(l^{*}-r^{*})(b)u^{*}.
  \]
  Hence \eqref{eq:rep1} holds for $(l^{*},l^{*}-r^{*},V^{*})$. The
  remaining identities \eqref{eq:rep2}--\eqref{eq:rep5} are verified in
  the same way, and the last statement is the case
  $(l,r,V)=(L_{\star},R_{\star},D)$ of Example~\ref{ex:adjoint-rep}.
\end{proof}

\subsection{The cyclic structure of the dual anti-pre-Lie operad and quadratic structures}\label{sec:cyclic}

It is well known \cite{MR2135545,MR1913297} that the Koszul dual of a cyclic operad $\calP$ is again cyclic. Thus, the operad
$\AntiPLdual$ is cyclic. Explicitly, its cyclic structure translates to the following definition.

\begin{definition}
A symmetric bilinear form $S$ on a dual anti-pre-Lie algebra $D$ is said to be \emph{invariant} if
\begin{equation}\label{eq:S-invariance}
    S(a\star b,c)=S(a,\,c\star b-b\star c),
    \quad \forall\,a,b,c\in D.
\end{equation}
\end{definition}

\begin{remark}
\label{rem:S-consequence}
As in the case of anti-pre-Lie algebras, the invariance property implies another relation allowing one to isolate the other argument of the product:
  \begin{equation}\label{eq:S-consequence}
    S(a\star b,c)+S(b,\,a\star c)=0,
    \quad \forall\,a,b,c\in D.
  \end{equation}
\end{remark}

Recall that a \emph{quadratic Lie algebra} is a Lie algebra
$(\mathfrak{g},[-,-])$ together with a nondegenerate symmetric bilinear
form $K$ on $\mathfrak{g}$ which is \emph{invariant} in the
sense that
\begin{equation}
  \label{eq:K-invariance}
  K([x,y],z)=K(x,[y,z]),
  \quad \forall\,x,y,z\in\mathfrak{g}.
\end{equation}
In general, such a notion of a quadratic algebra is available for any cyclic operad $\calP$: a \emph{quadratic $\calP$-algebra} is a $\calP$-algebra equipped with a nondegenerate invariant symmetric bilinear form. In particular, a \emph{quadratic
anti-pre-Lie algebra} is an anti-pre-Lie algebra equipped with a nondegenerate invariant symmetric bilinear form, and a \emph{quadratic
dual anti-pre-Lie algebra} is a dual anti-pre-Lie algebra equipped with a nondegenerate invariant symmetric bilinear form.

Let us relate the definition of a quadratic dual anti-pre-Lie algebra to the previously defined notion of the dual representation.

\begin{proposition}\label{prop:quadratic-equiv}
Let $(D,\star)$ be a finite-dimensional dual anti-pre-Lie algebra,
and let $S$ be a nondegenerate bilinear form on $D$.
Let $\phi\colon D\to D^*$ be the induced linear isomorphism given by
\begin{equation}\label{eq:phi-def}
\langle\phi(a),b\rangle=S(a,b),\quad\forall\,a,b\in D.
\end{equation}
Define $\phi^t\colon D\to D^*$ by
$\langle\phi^t(a),b\rangle=\langle\phi(b),a\rangle$.
Then $(D,\star,S)$ is a quadratic dual anti-pre-Lie algebra if and
only if $\phi^t=\phi$ and $\phi$ gives an equivalence between
$(L_\star,R_\star,D)$ and
$(L_\star^*,L_\star^*-R_\star^*,D^*)$.
\end{proposition}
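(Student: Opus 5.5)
This is a direct translation argument: rewrite the invariance condition \eqref{eq:S-invariance} in terms of $\phi$ and compare it with the definition of the dual representation in Proposition~\ref{prop:dual-rep}.

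\emph{Symmetry.} By construction $\langle\phi^t(a),b\rangle=\langle\phi(b),a\rangle=S(b,a)$. Hence $\phi^t=\phi$ holds exactly when $S$ is symmetric. From now on we assume this in both directions of the equivalence.

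\emph{Intertwining conditions.} Saying that $\phi$ is an equivalence from $(L_\star,R_\star,D)$ to $(L_\star^*,L_\star^*-R_\star^*,D^*)$ means that, for all $a$,
\begin{equation*}
\phi(L_\star(a)b)=L_\star^*(a)\phi(b),\qquad \phi(R_\star(a)b)=(L_\star^*-R_\star^*)(a)\phi(b).
\end{equation*}
I will pair each of these with an arbitrary $c\in D$ and use \eqref{eq:dual-map}.

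The first condition becomes
\begin{equation*}
S(a\star b,c)=-\langle\phi(b),a\star c\rangle=-S(b,a\star c).
\end{equation*}
This is exactly \eqref{eq:S-consequence}.

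The second condition becomes
\begin{equation*}
S(b\star a,c)=-S(b,a\star c)+S(b,c\star a).
\end{equation*}
After renaming $a\leftrightarrow b$, this reads $S(a\star b,c)=S(a,c\star b-b\star c)$, which is exactly \eqref{eq:S-invariance}.

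\emph{The two directions.} If $S$ is symmetric and invariant, then \eqref{eq:S-invariance} holds by definition and \eqref{eq:S-consequence} holds by Remark~\ref{rem:S-consequence}. So both intertwining conditions hold, and $\phi$ is an isomorphism because $S$ is nondegenerate. Conversely, if $\phi^t=\phi$ and $\phi$ is an equivalence, then $S$ is symmetric, and the second intertwining condition gives \eqref{eq:S-invariance}. The target $(L^*_\star,L^*_\star-R^*_\star,D^*)$ is already known to be a representation by Proposition~\ref{prop:dual-rep}, so nothing further needs checking.

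\emph{Main obstacle.} The only delicate point is Remark~\ref{rem:S-consequence}, which I use but which is not proved earlier in the paper. My plan is to derive it from \eqref{eq:S-invariance} together with symmetry of $S$. Apply invariance twice to isolate the middle argument: start from $S(a\star b,c)$, use symmetry to write $S(b,a\star c)=S(a\star c,b)$, and apply invariance to that term as well. Then compare the resulting expressions, in the same way \eqref{eq:B-consequence} was obtained from \eqref{eq:APL-inv}. If this direct route turns out to be awkward, the fallback is to note that the equivalence only requires the two intertwining identities, and to verify the first one directly. Apart from this step, everything is sign bookkeeping with the minus sign in \eqref{eq:dual-map}.
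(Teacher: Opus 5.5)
Your proposal is correct and follows essentially the same route as the paper: pair the two intertwining conditions with an arbitrary $c\in D$, identify them with \eqref{eq:S-consequence} and \eqref{eq:S-invariance}, and match $\phi^t=\phi$ with symmetry of $S$. The step you flag as the main obstacle works by your direct route: symmetry and invariance give $S(b,a\star c)=S(a\star c,b)=S(a,\,b\star c-c\star b)=-S(a\star b,c)$, so the fallback is not needed.
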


\begin{proof}
The nondegeneracy of $S$ and finite dimensionality of $D$ imply that
$\phi$ is an isomorphism.
Suppose that  $(D,\star,S)$  is a quadratic dual anti-pre-Lie algebra.
Since $S$ is symmetric, we have
$\phi^t=\phi$.
Moreover, by \eqref{eq:S-invariance} and \eqref{eq:S-consequence},
we have
\begin{align*}
\langle\phi L_\star(a)b,c\rangle
 &=S(a\star b,c)=-S(b,a\star c)
 =\langle L_\star^*(a)\phi(b),c\rangle,\\
\langle\phi R_\star(a)b,c\rangle
 &=S(b\star a,c)=S(b,c\star a-a\star c) =\langle(L_\star^*-R_\star^*)(a)\phi(b),c\rangle,\;\forall a,b,c\in D.
\end{align*}
Thus $\phi$ gives the stated equivalence of representations.
The converse is obtained by reversing
these computations.
\end{proof}

\begin{proposition}
    \label{prop:double}
    Let $(D,\star)$ be a dual anti-pre-Lie algebra and let
    $D\ltimes_{L^{*}_{\star},\,L^{*}_{\star}-R^{*}_{\star}}D^{*}$ be the
    semidirect product of $(D,\star)$ by the representation
    $(L^{*}_{\star},L^{*}_{\star}-R^{*}_{\star},D^{*})$ of
    Proposition~\ref{prop:dual-rep}. Then
    $\bigl(D\ltimes_{L^{*}_{\star},\,L^{*}_{\star}-R^{*}_{\star}}D^{*},
    S_{d}\bigr)$ is a quadratic dual anti-pre-Lie algebra, where
    \begin{equation}
        \label{eq:Sd}
        S_{d}(a+\alpha,\,b+\beta)
        =\langle a,\beta\rangle+\langle\alpha,b\rangle,
        \quad \forall\,a,b\in D,\ \alpha,\beta\in D^{*}.
    \end{equation}
\end{proposition}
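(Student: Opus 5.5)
The plan has three parts: check that $S_d$ is a nondegenerate symmetric form, check that the semidirect product is a dual anti-pre-Lie algebra, and verify the invariance identity \eqref{eq:S-invariance} directly on $D\oplus D^*$. For the first part, $S_d$ is symmetric by construction and nondegenerate because it is the canonical pairing between $D$ and $D^*$. For the second part, by Proposition~\ref{prop:dual-rep} the triple $(L^*_\star,L^*_\star-R^*_\star,D^*)$ is a representation of $(D,\star)$. Proposition~\ref{prop:semidirect} then says that $D\oplus D^*$ with the product
\[
(a+\alpha)\bullet(b+\beta)=a\star b+L^*_\star(a)\beta+(L^*_\star-R^*_\star)(b)\alpha
\]
is a dual anti-pre-Lie algebra.

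The invariance check is the main computation. By trilinearity and symmetry of $S_d$, it suffices to verify
\[
S_d(x\bullet y,z)=S_d(x,z\bullet y-y\bullet z)
\]
for homogeneous $x,y,z$, each taken in $D$ or in $D^*$. Since $D^*$ is a square-zero ideal and $S_d$ pairs $D$ only with $D^*$, every case with two or more inputs in $D^*$ vanishes on both sides, as does the case with all inputs in $D$. So only three cases remain, each with exactly one input in $D^*$.

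In each of these cases we unwind the definitions $\langle L^*_\star(a)\gamma,c\rangle=-\langle\gamma,a\star c\rangle$ and $\langle R^*_\star(a)\gamma,c\rangle=-\langle\gamma,c\star a\rangle$.
\begin{itemize}
\item Case $z=\gamma\in D^*$, $x=a$, $y=b$. The left side is $\langle\gamma,a\star b\rangle$. The right side is $\langle a,(L^*_\star-R^*_\star)(b)\gamma-L^*_\star(b)\gamma\rangle=-\langle R^*_\star(b)\gamma,a\rangle=\langle\gamma,a\star b\rangle$. The two agree.
\item Case $x=\alpha\in D^*$, $y=b$, $z=c$. The left side is $\langle(L^*_\star-R^*_\star)(b)\alpha,c\rangle=-\langle\alpha,b\star c\rangle+\langle\alpha,c\star b\rangle$, which is exactly the right side $\langle\alpha,c\star b-b\star c\rangle$.
\item Case $y=\beta\in D^*$, $x=a$, $z=c$. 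The left side is $\langle L^*_\star(a)\beta,c\rangle=-\langle\beta,a\star c\rangle$. The right side is $\langle a,(L^*_\star-R^*_\star)(c)\beta-L^*_\star(c)\beta\rangle=\langle\beta,a\star c\rangle$.
\end{itemize}

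The third case has a sign discrepancy. The obstacle is therefore this case, and, upstream of it, a consistent choice of sign conventions in the dual representation. I expect the discrepancy to be resolved by a careful recheck of the convention, or otherwise by rederiving the invariance via Remark~\ref{rem:S-consequence}. There, $S(a\star b,c)=-S(b,a\star c)$ together with symmetry gives an equivalent pair of conditions, and these can be checked case by case in the same way.

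If a sign conflict genuinely persists, I would fall back on Proposition~\ref{prop:quadratic-equiv}. The plan would be to show that $\phi\colon D\oplus D^*\to(D\oplus D^*)^*$ induced by $S_d$ intertwines the adjoint representation with its dual in the sense of Proposition~\ref{prop:dual-rep}. This amounts to the same three computations organized blockwise.
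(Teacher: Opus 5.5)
Your strategy is the same as the paper's. Symmetry and nondegeneracy of $S_d$ are immediate, the dual anti-pre-Lie structure on $D\oplus D^*$ comes from Propositions~\ref{prop:dual-rep} and~\ref{prop:semidirect}, and invariance is checked by direct expansion. The paper expands for general $a+\alpha$, $b+\beta$, $c+\gamma$ at once; this is equivalent to your reduction to the three mixed cases, and that reduction is correct. However, as written the proposal does not reach the conclusion. It ends with an unresolved ``sign discrepancy'' in the third case and offers only speculative remedies (changing conventions, Remark~\ref{rem:S-consequence}, Proposition~\ref{prop:quadratic-equiv}). The discrepancy is not real: you reversed the order of the commutator.

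In the case $x=a$, $y=\beta$, $z=c$ you must compute $z\bullet y-y\bullet z=c\bullet\beta-\beta\bullet c$. By the product formula, $c\bullet\beta=L^*_\star(c)\beta$ and $\beta\bullet c=(L^*_\star-R^*_\star)(c)\beta$, so $c\bullet\beta-\beta\bullet c=R^*_\star(c)\beta$. You instead wrote $(L^*_\star-R^*_\star)(c)\beta-L^*_\star(c)\beta=-R^*_\star(c)\beta$, which is $y\bullet z-z\bullet y$. You apparently carried over the pattern of your first case, where the $D^*$-input was $z$ rather than $y$. With the correct sign, the right-hand side is $\langle a,R^*_\star(c)\beta\rangle=\langle R^*_\star(c)\beta,a\rangle=-\langle\beta,a\star c\rangle$. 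This equals your left-hand side $\langle L^*_\star(a)\beta,c\rangle=-\langle\beta,a\star c\rangle$. So the paper's conventions are consistent and no fallback is needed. Your three cases then add up to exactly the paper's computation:
\begin{align*}
S_d\bigl((a+\alpha)\bullet(b+\beta),c+\gamma\bigr)
&=\langle\gamma,a\star b\rangle-\langle\beta,a\star c\rangle+\langle\alpha,c\star b-b\star c\rangle\\
&=S_d\bigl(a+\alpha,(c+\gamma)\bullet(b+\beta)-(b+\beta)\bullet(c+\gamma)\bigr).
\end{align*}
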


\begin{proof}
    The form $S_{d}$ is visibly symmetric and nondegenerate.
    Moreover, for all $a,b,c\in D$ and $\alpha,\beta,\gamma\in D^{*}$,
    \begin{align*}
        S_{d}\bigl((a+\alpha)\bullet(b+\beta),\,c+\gamma\bigr)
        &=S_{d}\bigl(a\star b+L^{*}_{\star}(a)\beta
        +(L^{*}_{\star}-R^{*}_{\star})(b)\alpha,\;c+\gamma\bigr)\\
        &=\langle a\star b,\gamma\rangle
        +\langle L^{*}_{\star}(a)\beta,c\rangle
        +\bigl\langle(L^{*}_{\star}-R^{*}_{\star})(b)\alpha,c\bigr\rangle\\
        &=\langle a\star b,\gamma\rangle
        -\langle\beta,a\star c\rangle
        +\langle\alpha,\,c\star b-b\star c\rangle\\
        &=-\langle a,R^{*}_{\star}(b)\gamma\rangle
        +\langle R^{*}_{\star}(c)\beta,a\rangle
        +\langle\alpha,\,c\star b-b\star c\rangle\\
        &=S_{d}\bigl(a+\alpha,\;
        (c+\gamma)\bullet(b+\beta)-(b+\beta)\bullet(c+\gamma)\bigr).
    \end{align*}
    Hence $S_{d}$ is invariant, and the claim follows.
\end{proof}

We conclude with the following result.

\begin{proposition}
    \label{prop:tensor-quadratic}
    Let $(A,\circ,B)$ be a quadratic anti-pre-Lie algebra and let
    $(D,\star,S)$ be a quadratic dual anti-pre-Lie algebra. Then
    $(A\otimes D,[-,-],K)$ is a quadratic Lie algebra, where
    \begin{align}
        [x\otimes a,\,y\otimes b]
        &=x\circ y\otimes a\star b-y\circ x\otimes b\star a,
        \label{eq:tensor-bracket}\\
        K(x\otimes a,\,y\otimes b)
        &=B(x,y)\,S(a,b),
        \quad \forall\,x,y\in A,\ a,b\in D.
        \label{eq:tensor-form}
    \end{align}
\end{proposition}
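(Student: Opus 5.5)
We will verify three things directly: skew-symmetry of the bracket, the Jacobi identity, and invariance and nondegeneracy of $K$.

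\emph{Skew-symmetry and the form.} Skew-symmetry of \eqref{eq:tensor-bracket} is immediate, since swapping $x\otimes a$ and $y\otimes b$ exchanges the two terms with a sign. The form $K$ is symmetric and nondegenerate because it is the tensor product of two nondegenerate symmetric forms on finite-dimensional spaces (or, without finiteness, because $B\otimes S$ pairs a basis with its dual basis).

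\emph{Jacobi identity.} This is the conceptual heart of the argument. We expand $[[x\otimes a,y\otimes b],z\otimes c]$ into four terms of the form (monomial in $A$)$\otimes$(monomial in $D$) with matching tree shapes, and then sum cyclically. The general principle, going back to Ginzburg--Kapranov, is that $\calP\otimes\calP^!$ maps to $\Lie$: the element $\sum_i e_i\otimes e_i^\vee$ in $\Free(E)(2)\otimes\Free(E^\vee)(2)$ gives a Lie bracket exactly because $R_{\AntiPL}$ and $R_{\AntiPLdual}$ are orthogonal under the pairing used in Theorem~\ref{thm:koszul-dual}. Concretely, the Jacobiator is $\sum_\mu m_\mu\otimes m'_\mu$, summed over the twelve arity-three tree monomials, with signs given by the pairing. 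Written in a basis of $\Free(E)(3)$ adapted to $R_{\AntiPL}$, it lies in the image of $R_{\AntiPL}\otimes\Free(E^\vee)(3)+\Free(E)(3)\otimes R_{\AntiPL}^{\perp}$, so it vanishes after evaluation in $A$ and $D$.

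I would present this as a short direct check rather than a general operadic lemma. We use the relations of $D$ to rewrite every $D$-monomial in terms of a basis of $\AntiPLdual(3)$, for instance the four elements $\mathbf r_1,\mathbf r_2,\mathbf r_3,\Theta_3$. The coefficient of each basis element is then an $A$-expression, which we check is a combination of \eqref{eq:AP1} and \eqref{eq:AP2}. Concretely, the coefficients of the right combs should reproduce \eqref{eq:AP1}-type relations, and the coefficient of $\Theta_3$ should reproduce \eqref{eq:AP2}. Bookkeeping the signs and the action of the twelve monomials is the main obstacle.

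\emph{Invariance of $K$.} We need
\[
K([x\otimes a,y\otimes b],z\otimes c)=K(x\otimes a,[y\otimes b,z\otimes c]).
\]
For the left-hand side, we apply \eqref{eq:APL-inv} and \eqref{eq:B-consequence} to move $x$ out on the $A$-side, and \eqref{eq:S-invariance} and \eqref{eq:S-consequence} to move $a$ out on the $D$-side:
\[
B(x\circ y,z)S(a\star b,c)=B(y,x\circ z-z\circ x)\,S(b,c\star a-a\star c).
\]
However, isolating $x$ and $a$ requires \eqref{eq:B-consequence} and \eqref{eq:S-consequence}:
\[
B(x\circ y,z)=-B(x,z\circ y),\qquad S(a\star b,c)=S(a,c\star b-b\star c),
\]
so the first term becomes
\[
-B(x,z\circ y)\,S(a,c\star b-b\star c).
\]
Similarly,
\[
B(y\circ x,z)=B(x,y\circ z-z\circ y),\qquad S(b\star a,c)=-S(a,b\star c).
\]
Hence the left-hand side is $K(x\otimes a,\Xi)$, where
\[
\Xi=-z\circ y\otimes(c\star b-b\star c)+(y\circ z-z\circ y)\otimes b\star c.
\]
Expanding, $\Xi=y\circ z\otimes b\star c-z\circ y\otimes c\star b$, which is exactly $[y\otimes b,z\otimes c]$. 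This completes the invariance check, and with it the proof.
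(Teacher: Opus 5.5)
Your proposal is correct and is essentially the paper's proof. The Lie structure comes from the general $\calP\otimes\calP^!$ principle, which the paper simply cites (Loday--Vallette, Prop.~7.6.5). Your optional direct check also works: in the cyclic sum, the $\Theta_3$-coefficient is exactly the left side of \eqref{eq:AP2}, and each right-comb coefficient is, up to sign, an instance of \eqref{eq:AP1}. Invariance of $K$ is the same computation with \eqref{eq:APL-inv}, \eqref{eq:B-consequence}, \eqref{eq:S-invariance} and \eqref{eq:S-consequence}. The only difference is that you isolate $x$ in $K([u,v],w)=K(u,[v,w])$, whereas the paper isolates $y$ in the equivalent form $K([u,v],w)+K(v,[u,w])=0$.

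There are two blemishes to fix. First, the first display of your invariance paragraph is false and should be deleted: its factor $S(b,c\star a-a\star c)$ equals $S(b\star a,c)$, not $S(a\star b,c)$. Second, the identity $S(a\star b,c)=S(a,c\star b-b\star c)$ that you then use is \eqref{eq:S-invariance}, not \eqref{eq:S-consequence}. The computation you actually carry out afterwards is correct.
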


\begin{proof}
    By \cite[Proposition~7.6.5]{LodayVallette}, $(A\otimes D,[-,-])$ is a
    Lie algebra, and $K$ is clearly a nondegenerate symmetric
    bilinear form on $A\otimes D$. For all $x,y,z\in A$ and $a,b,c\in D$,
    \begin{align*}
        &K\bigl([x\otimes a,y\otimes b],\,z\otimes c\bigr)
        +K\bigl(y\otimes b,\,[x\otimes a,z\otimes c]\bigr)\\
        &\quad=K\bigl(x\circ y\otimes a\star b
        -y\circ x\otimes b\star a,\;z\otimes c\bigr)
        +K\bigl(y\otimes b,\;x\circ z\otimes a\star c
        -z\circ x\otimes c\star a\bigr)\\
        &\quad=B(x\circ y,z) S (a\star b,c)
        -B(y\circ x,z)S(b\star a,c)
        +B(y,x\circ z)S(b,a\star c)\\
        &\quad\phantom{={}}
        -B(y,z\circ x)S(b,c\star a)\\
        &\quad\overset{\eqref{eq:APL-inv},\eqref{eq:B-consequence}}{=}
        B(y,\,x\circ z-z\circ x) S (a\star b,c)
        +B(y,z\circ x)S(b\star a,c)
        +B(y,x\circ z)S(b,a\star c)\\
        &\quad\phantom{={}}
        -B(y,z\circ x)S(b,c\star a)\\
        &\quad\overset{\eqref{eq:S-invariance},\eqref{eq:S-consequence}}{=}0 .
    \end{align*}
Hence the bilinear form $K$ is invariant, and $(A\otimes D,[-,-],K)$
    is a quadratic Lie algebra.
\end{proof}

\begin{remark}
Operadically, this result means that the usual morphism
 \[
\Lie\to\AntiPL \mathbin{\underset{\mathrm H}{\otimes}}\AntiPLdual
 \]
from the Lie operad to the Hadamard product of $\AntiPL$ and $\AntiPLdual$ is a morphism of cyclic operads. In fact, the same is true for any cyclic operad $\calP$ generated by binary operations satisfying only quadratic relations: the canonical morphism
 \[
\Lie\to \calP\mathbin{\underset{\mathrm H}{\otimes}}\calP^!
 \]
is a morphism of cyclic operads. This can be justified by a computation similar to the one above, or alternatively one can note that this morphism arises from the morphism from the operad $\Lie$ to the white Manin product of $\calP$ and $\calP^!$, which comes from the adjunction \cite{GinzburgKapranov,MR2427978}
 \[
\Hom(\calP\medbullet\calQ,\calR)\cong\Hom(\calP,\calQ^!\medcirc\calR)
 \]
by writing
 \[
\Hom(\Lie,\calP^!\medcirc\calP)\cong
\Hom(\Lie\medbullet\calP,\calP)\cong\Hom(\calP\medbullet\Lie,\calP)\cong\Hom(\calP,\Com\medcirc\calP)\cong\Hom(\calP,\calP).
 \]
A direct inspection shows that the Manin product adjunction is compatible with cyclic actions, furnishing a conceptual explanation of the above result.
\end{remark}

\section{Open questions}
\label{sec:open}

We conclude this paper by stating several conjectures and outlining directions for future work.

\subsection{Identities for binary operations in the anti-pre-Lie operad}

\subsubsection{The skew-symmetric operation}
We saw in Theorem \ref{thm:lie-kernel} that the lowest degree in which the commutator of a general anti-pre-Lie algebra satisfies an identity not implied by the Lie axioms is five, and in that degree all identities follow from the standard identity of degree five. We conjecture that there are no further identities.

\begin{conjecture}\label{conj:LieAPL}
The suboperad of $\AntiPL$ generated by the Lie bracket $a_1\circ a_2-a_2\circ a_1$ is isomorphic to the quotient of the Lie operad by the ideal generated by the standard identity of degree five.
\end{conjecture}

It is also of independent interest to describe the kernel of the map $\kappa$ as a collection of representations of symmetric groups. For the kernel of the map $\Lie\to\Nov$, which is an upper bound for $\ker\kappa$, this is the result of Molev \cite[Theorem~1]{Molev} mentioned in the proof of Theorem \ref{thm:lie-kernel}. Using the result of Molev, we checked that the kernel of the map $\Lie\to\Nov$ is generated by the standard identity in arities $n\le 9$; consequently, Conjecture \ref{conj:LieAPL} holds in arities $n\le 9$.

Another related question is to characterize Lie algebras $L$ that can be embedded in commutator Lie algebras of anti-pre-Lie algebras. In the pre-Lie case, that is the case for every Lie algebra \cite{Segal}; for  anti-pre-Lie algebras, the standard identity of degree five clearly must hold for $L$, but it is not clear if there are other constraints.

\subsubsection{The symmetric operation}

It is also interesting to consider the symmetric product $a\circ b+b\circ a$ that one can define on anti-pre-Lie algebras.  Does it satisfy any identities? A direct computer calculation shows that this product satisfies no identities beyond commutativity in degrees at most $5$.

\begin{conjecture}
The suboperad of the operad $\AntiPL$ generated by the symmetric product $a\circ b+b\circ a$ is isomorphic to the commutative magmatic operad; in other words, the symmetric product on a general anti-pre-Lie algebra satisfies no identities beyond commutativity.
\end{conjecture}

Note that in the case of Novikov algebras, the symmetric product $a\star b+b\star a$ satisfies a polynomial identity of degree $4$, known as the (right) Tortken identity \cite{DzhumadildaevNovikovJordan}, while  for pre-Lie algebras, the symmetric product is known to satisfy no identities beyond commutativity \cite{MR2763748}.

\subsection{Identities for the symmetric operation in the dual anti-pre-Lie operad}

Computations in arities $n\le 10$ suggest the following conjecture.

\begin{conjecture}
The suboperad of $\AntiPLdual$ generated by the symmetric product $a\star b+b\star a$ is isomorphic to the quotient of the commutative magmatic operad by the ideal generated by the two cubic relations
 \[
2((a_1a_2)a_3)a_4-2((a_1a_3)a_2)a_4+((a_1a_4)a_2)a_3-((a_1a_4)a_3)a_2,
 \]
 \[
(a_1a_2)(a_3a_4)-(a_1a_3)(a_2a_4).
 \]
\end{conjecture}

\subsection{Homotopy and cohomology}

\subsubsection{Minimal models}\mbox{}\par

It would be interesting to construct an explicit minimal model $\AntiPL_{\infty}$ of $\AntiPL$ and develop the resulting theory of homotopy anti-pre-Lie algebras, though this problem might be extremely hard. The same question for the operad $\AntiPLdual$ may be more tractable, since that latter operad admits a finite Gröbner basis, and methods of \cite{DKQuillen} may be applied.

\subsubsection{Cohomology and deformations of anti-pre-Lie algebras} Similarly to how it is done for Novikov algebras in \cite{KLSX}, one may use the operadic Chevalley--Eilenberg cohomology to define a cohomology theory of anti-pre-Lie algebras; since an explicit description of the Koszul dual operad together with the symmetric group actions
is furnished by Corollary \ref{cor:module}, this seems a realistic task.

\subsection{Free algebras}

It would be interesting to find a combinatorial description of free anti-pre-Lie algebras, parallel in spirit to the rooted trees model for the pre-Lie operad \cite{ChapotonLivernet}.  However, Proposition~\ref{prop:dimensions} already rules out a model coming from an ordinary set-valued species whose linearization is $\AntiPL(n)$ and on which $S_n$ acts by relabelling basis objects.  Indeed,
\[
\AntiPL(3)\cong 2S^{3}\oplus 3S^{2,1}.
\]
If $c=(123)\in S_3$, then the trivial module $S^3$ has trace $1$ on $c$, whereas the standard module $S^{2,1}$ has trace $-1$.  Hence
\[
\operatorname{tr}\bigl(c\mid \AntiPL(3)\bigr)=2\cdot 1+3\cdot(-1)=-1.
\]
By contrast, the trace of any permutation of a basis is the number of basis elements it fixes, and is therefore a nonnegative integer.  Thus $\AntiPL(3)$ is not a permutation representation, so no such relabelling basis can exist.

The open problem should therefore be understood more broadly: one may look for a combinatorial or representation-theoretic model allowing signs or more general linear $S_n$-actions, or for another explicit description of free anti-pre-Lie algebras.  The dimensions of the labelled multilinear components are
\[
1, 2, 8, 45, 304, 2283, 18940, \dots .
\]
Already describing the free anti-pre-Lie algebra on one generator is an open problem.  Proposition~\ref{prop:dimensions} implies that the dimensions of its homogeneous components are
\[
1,1,2,4,9,18,39,\ldots .
\]
In characteristic zero these numbers are the dimensions of the coinvariants $\AntiPL(n)_{S_n}$. Equivalently, they are the multiplicities of the trivial representation in $\AntiPL(n)$. Without a permutation-basis model, they should not be interpreted automatically as numbers of unlabelled combinatorial objects.  We did not find either sequence in the OEIS \cite{oeis} (accessed October 1, 2026).  By contrast, Corollary~\ref{cor:module} immediately leads to a simple description of free dual anti-pre-Lie algebras.

\noindent{\bf Acknowledgements.} This work is supported by
Institut Universitaire de France and NSFC (12271265, W2412041,
12671037). It is also supported by the China Postdoctoral Science
Foundation (2024T005TJ, 2024M761507), the Postdoctoral Fellowship
Program of CPSF (GZC20240755), the Fundamental Research Funds for
the Central Universities and Nankai Zhide Foundation.

 \smallskip
 \noindent
 {\bf Declaration of interests. } The authors have no conflicts of interest to
 disclose.

 \smallskip

 \noindent
 {\bf Data availability. } No new data were created or analyzed in this study.

\appendix

\section{Proof of the standard identity of degree five}\label{app:standard}

In this appendix, we record a detailed proof of the fact that the standard identity of degree five $\mathrm{st}_5(x_{1},\dots,x_{5})$ is satisfied in every commutator Lie algebra of an anti-pre-Lie algebra.
\begin{lemma}\label{lem:ad-comm}
  Let $(A,\circ)$ be an anti-pre-Lie algebra. Then the following identity holds:
  \begin{align}
    &\ad([x,y])
    = L_{\circ}(y)R_{\circ}(x)-L_{\circ}(x)R_{\circ}(y),\label{eq:ad-LR}
  \end{align}
where
\begin{equation*}
L_{\circ}(x)(y) = x \circ y,\;
R_{\circ}(x)(y) = y \circ x,\;
\ad(x)(y) = [x,y]=x \circ y-y\circ x,\;\forall x,y\in A.
\end{equation*}
\end{lemma}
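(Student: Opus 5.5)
The plan is a direct computation: evaluate both sides of \eqref{eq:ad-LR} on an arbitrary element $z\in A$. The right-hand side is
\[
L_{\circ}(y)R_{\circ}(x)z-L_{\circ}(x)R_{\circ}(y)z=y\circ(z\circ x)-x\circ(z\circ y).
\]
The left-hand side is
\[
[[x,y],z]=[x,y]\circ z-z\circ[x,y].
\]
So it suffices to rewrite each of the two terms $[x,y]\circ z$ and $z\circ[x,y]$ as a combination of right-comb monomials $u\circ(v\circ w)$, and then compare.

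For the first term I will use \eqref{eq:AP1} with $x$ and $y$ interchanged. This reads $y\circ(x\circ z)-x\circ(y\circ z)=[x,y]\circ z$, which already expresses $[x,y]\circ z$ through right combs.

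For the second term I will use the consequence \eqref{eq:altI} of the anti-pre-Lie axioms. It was already invoked from \cite{LiuBai} in the proof of Proposition~\ref{prop:nosgn}. It gives
\[
z\circ[x,y]=-x\circ[y,z]-y\circ[z,x].
\]
Expanding the brackets turns this into
\[
z\circ[x,y]=-x\circ(y\circ z)+x\circ(z\circ y)-y\circ(z\circ x)+y\circ(x\circ z).
\]

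Subtracting the second expression from the first, the monomials $x\circ(y\circ z)$ and $y\circ(x\circ z)$ cancel. What remains is exactly $y\circ(z\circ x)-x\circ(z\circ y)$, which is the right-hand side above.

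The only point needing care is the use of \eqref{eq:altI}. It is not one of the defining identities \eqref{eq:AP1}--\eqref{eq:AP2}, so I will either cite \cite{LiuBai} as the paper does earlier or rederive it. To rederive it, I would take the cyclic sum of \eqref{eq:AP1} over $(x,y,z)$ and combine it with \eqref{eq:AP2}. Beyond that the argument is a routine sign check, with no real obstacle.
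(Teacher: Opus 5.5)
Your proposal is correct and matches the paper's proof: both rewrite $[x,y]\circ z$ using \eqref{eq:AP1} with $x,y$ interchanged and eliminate $z\circ[x,y]$ using \eqref{eq:altI}, differing only in bookkeeping (the paper adds the right-hand side to $[[x,y],z]$ and shows the total vanishes). Your sketched rederivation of \eqref{eq:altI}, as the cyclic sum of \eqref{eq:AP1} combined with \eqref{eq:AP2}, also works.
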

\begin{proof}
For all $x,y,z\in A$, we have
\begin{align*}
&[[x,y],z]+x\circ(z\circ y)-y\circ(z\circ x)\\
&=[x,y]\circ z-z\circ [x,y] +x\circ (z\circ y)-y\circ(z\circ x)\\
&\overset{\eqref{eq:AP1}}{=}y\circ(x\circ z)-x\circ(y\circ z)-z\circ [x,y]+x\circ(z\circ y)-y\circ (z\circ x)\\
&=x\circ [z,y]+y\circ [x,z]+z\circ [y,x]\\
&\overset{\eqref{eq:altI}}{=}0,
\end{align*}
which equivalently gives \eqref{eq:ad-LR}.
\end{proof}

For a multilinear expression in $x_{1},\dots,x_{5}$ we write
\[
\langle m(x_{1},\cdots,x_{5}) \rangle
:= \sum_{\sigma \in S_{4}} \sgn(\sigma)\,
m\bigl(x_{\sigma(1)},\dots,x_{\sigma(4)},x_{5}\bigr)
\]
for the alternation over the first four arguments, $x_{5}$ being fixed;
$\langle - \rangle$ is extended linearly and is always understood in
$\AntiPL(5)$. We use constantly that
$\langle \tau\cdot m\rangle = \sgn(\tau)\langle m\rangle$ for
$\tau \in S_{4}$; in particular, if $F$ is linear in one slot and does
not involve $x_{1},x_{2}$, then
\begin{equation}
  \label{eq:halve}
 2\, \bigl\langle F(x_{1}\circ x_{2})\bigr\rangle
  =  \bigl\langle F([x_{1},x_{2}])\bigr\rangle ,
\end{equation}
and similarly for $x_{3},x_{4}$.

Using the left
and right multiplication operators, we can reformulate the
standard identity in a very particular way.

\begin{lemma}\label{lem:reduction}
  In any anti-pre-Lie algebra,
  \begin{equation}
    \label{eq:LRLR}
    \mathrm{st}_5(x_{1},\dots,x_{5})
    =
    \Bigl\langle\, x_{1} \circ
    \bigl((x_{3} \circ (x_{5} \circ x_{4}))\circ x_{2}\bigr)\Bigr\rangle
    =
    \Alt_{4}\bigl(
    L_{\circ}(x_{1})R_{\circ}(x_{2})
    L_{\circ}(x_{3})R_{\circ}(x_{4})
    \bigr)(x_{5}).
  \end{equation}
\end{lemma}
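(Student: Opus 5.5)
The plan is to start from the definition $\mathrm{st}_5=\langle \ad(x_1)\ad(x_2)\ad(x_3)\ad(x_4)(x_5)\rangle$ and to replace adjoint operators, two at a time, by products of left and right multiplications using Lemma~\ref{lem:ad-comm}. The second equality in \eqref{eq:LRLR} is purely notational: $L_\circ(x_1)R_\circ(x_2)L_\circ(x_3)R_\circ(x_4)(x_5)=x_1\circ\bigl((x_3\circ(x_5\circ x_4))\circ x_2\bigr)$. So the content lies entirely in the first equality.

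\emph{Step 1: pair the adjoint operators.} Under the alternation, $\ad(x_1)\ad(x_2)$ may be replaced by $\tfrac12[\ad(x_1),\ad(x_2)]=\tfrac12\ad([x_1,x_2])$, by the Jacobi identity. The same holds for $\ad(x_3)\ad(x_4)$. Hence
\[
\mathrm{st}_5=\tfrac14\bigl\langle \ad([x_1,x_2])\,\ad([x_3,x_4])(x_5)\bigr\rangle .
\]

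\emph{Step 2: apply Lemma~\ref{lem:ad-comm} to each factor.} This gives $\ad([x_1,x_2])=L_\circ(x_2)R_\circ(x_1)-L_\circ(x_1)R_\circ(x_2)$. Under the alternation both terms contribute equally (swap $x_1\leftrightarrow x_2$), so this factor becomes $-2L_\circ(x_1)R_\circ(x_2)$. Similarly $\ad([x_3,x_4])$ becomes $-2L_\circ(x_3)R_\circ(x_4)$. The two factors $-2$ cancel the $\tfrac14$, leaving
\[
\bigl\langle L_\circ(x_1)R_\circ(x_2)L_\circ(x_3)R_\circ(x_4)(x_5)\bigr\rangle .
\]
This is exactly the claim. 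The substitution is legitimate because Lemma~\ref{lem:ad-comm} is an identity of operators valid for all arguments. The alternation argument of \eqref{eq:halve} applies because each factor is linear in the pair being swapped and the rest of the expression does not involve those variables.

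The main point requiring care is the sign bookkeeping. First, $\ad(x_1)\ad(x_2)-\ad(x_2)\ad(x_1)=\ad([x_1,x_2])$ by Jacobi in the commutator Lie algebra. Second, relabelling $x_1\leftrightarrow x_2$ inside $\langle-\rangle$ produces the sign $-1$, so $\langle L(x_2)R(x_1)\cdots\rangle=-\langle L(x_1)R(x_2)\cdots\rangle$. The net factor is therefore $-2$ for each pair, and the total factor is $(-2)^2/4=1$. I expect this sign check to be the only real obstacle; everything else is direct substitution.
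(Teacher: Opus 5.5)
Your proposal is correct and follows the paper's own proof. Both arguments first pair the adjoint operators under the alternation to get $\tfrac14\langle\ad([x_1,x_2])\ad([x_3,x_4])(x_5)\rangle$. Both then apply Lemma~\ref{lem:ad-comm} together with $(1\,2)$- and $(3\,4)$-antisymmetry, so that each factor becomes $-2L_\circ R_\circ$ and the constants combine as $(-2)^2/4=1$.
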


\begin{proof}
  Put
  \[
  W=
  \Alt_{4}\bigl(
  \ad(x_{1})\ad(x_{2})\ad(x_{3})\ad(x_{4})
  \bigr).
  \]
  Replacing $\sigma$ by $\sigma\cdot(1\,2)$ flips the sign and exchanges
  the first two factors, so
  \[
  2\, W
  =
   \Alt_{4}\bigl(
  [\ad(x_{1}),\ad(x_{2})]
  \ad(x_{3})\ad(x_{4})
  \bigr).
  \]
  Repeating with $(3\,4)$ and using $[\ad(u),\ad(v)]=\ad([u,v])$,
  we obtain
  \[
 4\, W
  =
  \sum_{\sigma\in S_{4}}\sgn(\sigma)\,
  \ad([x_{\sigma(1)},x_{\sigma(2)}])
  \ad([x_{\sigma(3)},x_{\sigma(4)}]).
  \]
 By Lemma~\ref{lem:ad-comm}, we have
  \[
  \ad([x_{\sigma(1)},x_{\sigma(2)}])
  =
  L_{\circ}(x_{\sigma(2)})R_{\circ}(x_{\sigma(1)})
  -
  L_{\circ}(x_{\sigma(1)})R_{\circ}(x_{\sigma(2)}).
  \]
  Noting that $\sigma\mapsto\sigma\cdot(1\,2)$ fixes the second factor
  while exchanging the two summands and flipping the sign, we have
  \[
  4\,W
  =
  -2\sum_{\sigma\in S_{4}}\sgn(\sigma)\,
  L_{\circ}(x_{\sigma(1)})R_{\circ}(x_{\sigma(2)})
  \ad([x_{\sigma(3)},x_{\sigma(4)}]).
  \]
  Doing the same for the second factor with $(3\,4)$ yields
  \[
  4\,W
  =
  4\,\Alt_{4}\bigl(
  L_{\circ}(x_{1})R_{\circ}(x_{2})
  L_{\circ}(x_{3})R_{\circ}(x_{4})
  \bigr).
  \]
  Evaluating at $x_{5}$ gives \eqref{eq:LRLR}.
\end{proof}

The whole question is now reduced to proving the vanishing of a single alternated monomial,
which is the content of Proposition~\ref{prop:alt-s4} below.

Let us fix the notation and
isolate the one identity that drives the whole argument.
We abbreviate
\[
Y=[x_{1},x_{2}],
\;
W=[x_{3},x_{4}],
\;
Q=[x_{5},Y],
\;
Z=[x_{5},W],
\;
P=x_{3}\circ(x_{5}\circ x_{4}),
\]
and we set
\[
A:=\langle Z\circ Y\rangle,
\;
B:=\langle Y\circ Z\rangle,
\;
C:=\bigl\langle x_{1}\circ(P\circ x_{2})\bigr\rangle.
\]

\begin{lemma}[$P$-to-$Z$ reduction]
    \label{lem:PZ}
    In any anti-pre-Lie algebra,
    \begin{equation}
        \label{eq:P-minus-P34}
        P-P^{(3\,4)}=Z.
    \end{equation}
    Consequently, for any $F$ linear in one slot and independent of
    $x_{3},x_{4}$,
    \begin{equation}
        \label{eq:step0-PZ}
        2\,\langle F(P)\rangle
        =\langle F(Z)\rangle.
    \end{equation}
\end{lemma}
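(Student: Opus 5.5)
The plan is to derive \eqref{eq:P-minus-P34} directly from Lemma~\ref{lem:ad-comm}, and then to obtain \eqref{eq:step0-PZ} from the sign behaviour of the alternation $\langle-\rangle$, exactly as in \eqref{eq:halve}.

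For the first identity, I would evaluate \eqref{eq:ad-LR} on an element $z$:
\[
[[x,y],z]=L_\circ(y)R_\circ(x)z-L_\circ(x)R_\circ(y)z=y\circ(z\circ x)-x\circ(z\circ y).
\]
Now substitute $x=x_4$, $y=x_3$, $z=x_5$. The right-hand side becomes
\[
x_3\circ(x_5\circ x_4)-x_4\circ(x_5\circ x_3)=P-P^{(3\,4)},
\]
where $P^{(3\,4)}$ denotes $P$ with $x_3$ and $x_4$ interchanged. The left-hand side is $[[x_4,x_3],x_5]$. Applying skew-symmetry of the bracket twice gives $[[x_4,x_3],x_5]=[x_5,[x_3,x_4]]=[x_5,W]=Z$, which is \eqref{eq:P-minus-P34}. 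The only care needed in this step is matching the roles of $x$ and $y$ in Lemma~\ref{lem:ad-comm}, so that the orientation of $P$ comes out right.

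For the consequence, let $F$ be linear in one slot and independent of $x_3,x_4$. Since $\langle\tau\cdot m\rangle=\sgn(\tau)\langle m\rangle$ for $\tau=(3\,4)\in S_4$, and $(3\,4)$ sends $F(P)$ to $F(P^{(3\,4)})$, we get
\[
\langle F(P^{(3\,4)})\rangle=-\langle F(P)\rangle.
\]
Therefore
\[
2\langle F(P)\rangle=\langle F(P)\rangle-\langle F(P^{(3\,4)})\rangle=\langle F(P-P^{(3\,4)})\rangle=\langle F(Z)\rangle,
\]
using linearity of $F$ in the relevant slot and \eqref{eq:P-minus-P34}. This is \eqref{eq:step0-PZ}.

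I do not expect a genuine obstacle in either step.
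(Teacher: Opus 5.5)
Your proof is correct. The second half, the alternation argument giving \eqref{eq:step0-PZ}, is the same as the paper's.

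For \eqref{eq:P-minus-P34} you take a different route. You observe that it is just Lemma~\ref{lem:ad-comm} evaluated at $x_5$ with $(x,y)=(x_4,x_3)$; your signs check out. The paper instead recomputes it from the defining identities. It applies \eqref{eq:AP1} to $(x_3,x_5,x_4)$ and to $(x_4,x_5,x_3)$ and subtracts. It then uses \eqref{eq:AP2} on $(x_5,x_3,x_4)$ to rewrite $[x_5,x_3]\circ x_4-[x_5,x_4]\circ x_3$ as $-W\circ x_5$.

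Your version is shorter, and it makes visible that the ``$P$-to-$Z$ reduction'' is the operator identity \eqref{eq:ad-LR} in disguise. The paper's version is self-contained in \eqref{eq:AP1} and \eqref{eq:AP2}. Yours inherits the dependence of Lemma~\ref{lem:ad-comm} on the identity \eqref{eq:altI} quoted from \cite{LiuBai}. Incidentally, because the variables are generic, the paper's computation also gives a proof of \eqref{eq:ad-LR} that bypasses \eqref{eq:altI}.
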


\begin{proof}
    Applying \eqref{eq:AP1} to $(x_{3},x_{5},x_{4})$ and
    $(x_{4},x_{5},x_{3})$ and subtracting gives
    \[
    P-P^{(3\,4)}-x_{5}\circ W
    =
    [x_{5},x_{3}]\circ x_{4}
    -
    [x_{5},x_{4}]\circ x_{3}.
    \]
    By \eqref{eq:AP2} applied to $(x_{5},x_{3},x_{4})$, the right-hand side
    is $-W\circ x_{5}$. Hence
    \[
    P-P^{(3\,4)}
    =
    x_{5}\circ W-W\circ x_{5}
    =
    [x_{5},W]
    =
    Z,
    \]
    which is \eqref{eq:P-minus-P34}. Since
    $\langle F(P^{(3\,4)})\rangle=-\langle F(P)\rangle$,
    we obtain \eqref{eq:step0-PZ}.
\end{proof}

\begin{lemma}[Subtree lemma]
    \label{lem:subtree}
    Let $m$ be a multilinear monomial in $x_{1},\dots,x_{5}$.
    If $m$ has a subtree whose set of leaves is exactly three or exactly four of
    $x_{1},\dots,x_{4}$ \textup{(}and does not contain $x_{5}$\textup{)},
    then $\langle m\rangle=0$.
\end{lemma}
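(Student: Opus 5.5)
The plan is to reduce to the absence of the sign representation in arities three and four, as in Proposition~\ref{prop:nosgn}. Suppose $m$ has a subtree $s$ whose leaves form a set $T\subseteq\{x_1,\dots,x_4\}$ with $|T|\in\{3,4\}$. Write $m=C[s]$, where $C$ is a context monomial with one distinguished slot; the variables of $C$ are $x_5$ together with those of $\{x_1,\dots,x_4\}\setminus T$.

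The first step is to factor the alternation. Let $S_T\subseteq S_4$ be the subgroup permuting the variables of $T$, and choose coset representatives $g_1,\dots,g_r$ of $S_4/S_T$. Then
\[
\langle m\rangle=\sum_{i}\sgn(g_i)\,g_i\cdot\Bigl(\sum_{h\in S_T}\sgn(h)\,h\cdot m\Bigr).
\]
Since each $h\in S_T$ acts only on the leaves of $s$, we have $h\cdot m=C[h\cdot s]$. Hence the inner sum equals $C\bigl[\sum_{h\in S_T}\sgn(h)\,h\cdot s\bigr]$, which is the operadic composition of $C$ with the complete alternation of $s$.

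The second step invokes Proposition~\ref{prop:nosgn}. The element $s$ is a multilinear monomial in $|T|\ge 3$ variables, so its complete alternation vanishes in $\AntiPL(|T|)$. In other words, it lies in the ideal of relations, and composing it into $C$ keeps it in the ideal. The inner sum is therefore zero in $\AntiPL(5)$, and so is $\langle m\rangle$.

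I do not expect a serious obstacle. The only point needing care is that the sign bookkeeping is compatible: restricting $\sgn$ from $S_4$ to $S_T$ gives exactly the sign character of the symmetric group on $T$, so the inner sum really is the complete alternation of $s$. One should also use that elements of $S_T$ act trivially on the leaves of the context. Both facts follow immediately because the leaf sets of $C$ and $s$ are disjoint, and $x_5$ is excluded from $s$ by hypothesis.
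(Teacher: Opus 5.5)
Your proof is correct and follows the same route as the paper. The paper's one-line argument is exactly that the alternation over $x_1,\dots,x_4$ can be performed by first alternating over the leaves of the subtree, which vanishes by Proposition~\ref{prop:nosgn}; your coset decomposition $S_4=\bigsqcup_i g_iS_T$ makes that factorization explicit.
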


\begin{proof}
    The alternation over $x_1,\dots,x_4$ can be performed by first
    alternating over the leaves of the given subtree, and
    Proposition~\ref{prop:nosgn} ensures that the latter alternation vanishes.
\end{proof}

We are now ready to prove that the standard identity in the form of
Lemma \ref{lem:reduction} holds.

\begin{proposition}
    \label{prop:alt-s4}
    In $\AntiPL(5)$ one has
    \[
    \bigl\langle
    x_{1}\circ\bigl(P\circ x_{2}\bigr)
    \bigr\rangle=0.
    \]
\end{proposition}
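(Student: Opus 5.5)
The plan is to compute the quantity $C=\langle x_1\circ(P\circ x_2)\rangle$ in two different ways. Each computation will express $C$ as a combination of the two auxiliary quantities $A=\langle Z\circ Y\rangle$ and $B=\langle Y\circ Z\rangle$, plus terms that vanish by the Subtree Lemma~\ref{lem:subtree}. The goal is a small linear system in $A,B,C$ whose only solution is $C=0$, using characteristic zero to divide by integers.

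\textbf{Eliminating $P$ via Lemma~\ref{lem:PZ}.} Apply \eqref{eq:step0-PZ} with $F(u)=x_1\circ(u\circ x_2)$; this $F$ does not involve $x_3,x_4$, so
\[
2C=\langle x_1\circ(Z\circ x_2)\rangle.
\]
Then transform $x_1\circ(Z\circ x_2)$ using the anti-pre-Lie identities, with the aim of bringing $x_1,x_2$ together into $Y$. Applying \eqref{eq:AP1} to the triple $(x_1,Z,x_2)$ gives
\[
x_1\circ(Z\circ x_2)=Z\circ(x_1\circ x_2)+[Z,x_1]\circ x_2 .
\]
After alternation and \eqref{eq:halve}, the first term becomes $\tfrac12\langle Z\circ Y\rangle=\tfrac12 A$. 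The term $[Z,x_1]\circ x_2$ is handled with \eqref{eq:AP2} on $(Z,x_1,x_2)$, which trades it for $-[x_1,x_2]\circ Z-[x_2,Z]\circ x_1$. The piece $Y\circ Z$ gives $B$, and the remaining piece is symmetric to the original one under $1\leftrightarrow2$, so it can be solved for by antisymmetry. This should yield a first relation of the form $C=\alpha A+\beta B$.

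\textbf{A second, independent relation.} Start instead from $\langle Q\circ W\rangle$ and $\langle W\circ Q\rangle$, the analogues of $A,B$ with the roles of $\{1,2\}$ and $\{3,4\}$ exchanged. The swap $(13)(24)$ is even, so $\langle Q\circ W\rangle=\langle Z\circ Y\rangle^{(13)(24)}$ relates these to $A$ and $B$ directly. Alternatively, expand $A$ and $B$ using Lemma~\ref{lem:ad-comm}: $\ad(W)=L_\circ(x_4)R_\circ(x_3)-L_\circ(x_3)R_\circ(x_4)$, so $Z=-\ad(W)(x_5)$ is $-2$ times the alternated $x_4\circ(x_5\circ x_3)$. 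Likewise $\ad(Y)$ applied to $Z$ expands into $L_\circ R_\circ$ words. The Jacobi identity gives
\[
[Y,Z]=[[Y,x_5],W]+[x_5,[Y,W]],
\]
and $[Y,W]$ has a four-leaf subtree on $x_1,\dots,x_4$, so it dies under $\langle-\rangle$. Hence $A-B=-\langle[Y,Z]\rangle$ equals $\langle[Q,W]\rangle$. Expanding $\langle[Y,Z]\rangle$ by Lemma~\ref{lem:ad-comm} produces $\langle L_\circ(x_2)R_\circ(x_1)\,Z\rangle$-type terms, which by Lemma~\ref{lem:reduction}'s symmetrization are multiples of $C$. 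Repeating with the two bracket pairs swapped should give a second relation, for instance $A+B=\gamma C$ or $A=B$.

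\textbf{Main obstacle.} The hardest part will be bookkeeping of the leftover monomials, together with ensuring that the resulting system is nondegenerate. The leftovers that are not of the form $A$, $B$, $C$ must be shown to have a three- or four-leaf subtree inside $\{x_1,\dots,x_4\}$, or else be reduced by one further application of \eqref{eq:AP1} or \eqref{eq:altI}. My first attempt would be to track everything as coefficients on a short list of alternated tree shapes with $x_5$ in a fixed position, using \eqref{eq:halve} aggressively. If two relations turn out to be dependent, I will generate a third by applying \eqref{eq:AP1} to $(x_1,P,x_2)$ directly, before using Lemma~\ref{lem:PZ}, to obtain a relation involving $\langle P\circ(x_1\circ x_2)\rangle$, and then convert $P$ to $Z$ afterwards.
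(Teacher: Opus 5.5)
\textbf{There is a genuine gap: your plan produces only one relation among $A$, $B$, $C$, and one relation cannot force $C=0$.}

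Your first relation is correct. Applying \eqref{eq:step0-PZ}, then \eqref{eq:AP1} to $(x_1,Z,x_2)$ and \eqref{eq:AP2} to $(Z,x_1,x_2)$, gives $2C=\tfrac12(A-B)$, i.e.\ $4C=A-B$. This is exactly the paper's first relation; the paper derives it from \eqref{eq:AP1} on $(x_1,P,x_2)$, which is your fallback, and converts $P$ to $Z$ afterwards. The problem is that none of your other routes gives an independent relation:
\begin{itemize}
\item Lemma~\ref{lem:ad-comm} gives $\langle[Y,Z]\rangle=\langle x_2\circ(Z\circ x_1)-x_1\circ(Z\circ x_2)\rangle=-4C$. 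Since $A-B=-\langle[Y,Z]\rangle$, your Jacobi/$\ad$ route just re-derives $4C=A-B$.
\item The identity $A-B=\langle[Q,W]\rangle$ is a tautology, because the even permutation $(1\,3)(2\,4)$ identifies $\langle Q\circ W\rangle,\langle W\circ Q\rangle$ with $A,B$.
\item Swapping the bracket pairs gives $\langle[Q,W]\rangle=2\langle x_3\circ(Q\circ x_4)\rangle=4C$, the same relation again.
\item Your fallback, \eqref{eq:AP1} on $(x_1,P,x_2)$, is once more $4C=A-B$.
\end{itemize}
The candidate second relations you name do not help either. Given $4C=A-B$, the relation $A=B$ is equivalent to the statement itself, so hoping for it is circular. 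A relation $A+B=\gamma C$ together with $4C=A-B$ still leaves a one-dimensional solution space. The homogeneous system in $A,B,C$ needs two further independent relations.

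The missing idea is to compute $B$ twice, by pushing $Y$ into the right comb $P$.
\begin{itemize}
\item First computation. Start from $B=2\langle Y\circ P\rangle$. Apply \eqref{eq:AP1} to $(Y,x_3,x_5\circ x_4)$ and then to $(Y,x_5,x_4)$ to move $Y$ inside. The error terms contain the subtrees $[x_3,Y]$ and $Y\circ x_4$, so they vanish by Lemma~\ref{lem:subtree}. This gives $B=2\langle x_3\circ(Q\circ x_4)\rangle$. The permutation $(1\,3)(2\,4)$ and Lemma~\ref{lem:PZ} then turn this into $2\langle x_1\circ(Z\circ x_2)\rangle=4C$.
\item Second computation. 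Rewrite $Q\circ x_4$ using \eqref{eq:AP2} on $(x_5,Y,x_4)$; the term $[Y,x_4]\circ x_5$ dies. Then apply \eqref{eq:AP1} to $(x_3,[x_4,x_5],Y)$; the term containing $x_3\circ Y$ dies. The Jacobi identity $[[x_4,x_5],x_3]-[[x_3,x_5],x_4]=Z$ then yields $B=-A$.
\end{itemize}
With these three relations one gets $B=4C=A-B=-2B$, hence $3B=0$ and $C=0$.
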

\begin{proof}
    Applying \eqref{eq:AP1} to $(x_1,P,x_2)$ and alternating gives
    \[
    C
    =
    \bigl\langle P\circ(x_1\circ x_2)\bigr\rangle
    +
    \bigl\langle [P,x_1]\circ x_2\bigr\rangle.
    \]
    By \eqref{eq:halve} and Lemma~\ref{lem:PZ},
    \[
    4\bigl\langle P\circ(x_1\circ x_2)\bigr\rangle
    =
    2\langle P\circ Y\rangle
    =
    \langle Z\circ Y\rangle
    =
    A.
    \]
    On the other hand, \eqref{eq:AP2} applied to $(P,x_1,x_2)$ gives
    \[
    [P,x_1]\circ x_2
    +
    Y\circ P
    +
    [x_2,P]\circ x_1
    =
    0.
    \]
    By $(1\,2)$-antisymmetry, the first and third terms have equal alternations.\par
    \noindent Hence, using Lemma~\ref{lem:PZ} again,
    \[
    4\bigl\langle [P,x_1]\circ x_2\bigr\rangle
    =
    -2\langle Y\circ P\rangle
    =
    -B.
    \]
    Consequently, $4C=A-B$.

    We next compute $B$ in two ways.
    By Lemma~\ref{lem:PZ} and successive applications of
    \eqref{eq:AP1} to $(Y,x_3,x_5\circ x_4)$ and $(Y,x_5,x_4)$,
    we obtain
    \[
        B
         =2\langle Y\circ P\rangle =2\bigl\langle
        x_3\circ\bigl(Y\circ(x_5\circ x_4)\bigr)
        \bigr\rangle =2\bigl\langle x_3\circ(Q\circ x_4)\bigr\rangle.
    \]
    The terms omitted in the last two equalities contain
    $[x_3,Y]$ and $Y\circ x_4$, respectively.
    After expanding the commutators, the corresponding subtrees
    have leaves $x_1,x_2,x_3$ and $x_1,x_2,x_4$,
    so their alternations vanish by Lemma~\ref{lem:subtree}.
    The even permutation $(1\,3)(2\,4)$ sends
    $x_3\circ(Q\circ x_4)$ to $x_1\circ(Z\circ x_2)$.
    Thus Lemma~\ref{lem:PZ} gives
    \[
    B
    =
    2\bigl\langle x_3\circ(Q\circ x_4)\bigr\rangle
    =
    2\bigl\langle x_1\circ(Z\circ x_2)\bigr\rangle
    =
    4C.
    \]

    On the other hand, \eqref{eq:AP2} applied to $(x_5,Y,x_4)$ gives
    \[
    Q\circ x_4
    =
    -[Y,x_4]\circ x_5
    -
    [x_4,x_5]\circ Y.
    \]
    After left multiplication by $x_3$, the first term vanishes
    under alternation by Lemma~\ref{lem:subtree}, since
    $[Y,x_4]$ involves only $x_1,x_2,x_4$.
    Applying \eqref{eq:AP1} to $(x_3,[x_4,x_5],Y)$ then yields
    \[
        B
    =-2\bigl\langle
        x_3\circ\bigl([x_4,x_5]\circ Y\bigr)
        \bigr\rangle
    =-2\bigl\langle [[x_4,x_5],x_3]\circ Y\bigr\rangle.
    \]
    In the last equality, the omitted term
    $[x_4,x_5]\circ(x_3\circ Y)$ has zero alternation
    by Lemma~\ref{lem:subtree}, since $x_3\circ Y$
    has leaves $x_1,x_2,x_3$.

    Finally, the Jacobi identity gives
    \[
    [[x_4,x_5],x_3]-[[x_3,x_5],x_4]
    =
    [x_5,W]
    =
    Z.
    \]
    Using $(3\,4)$-antisymmetry and noting that $Y$ is fixed
    by $(3\,4)$, we obtain
    \[
    2\bigl\langle [[x_4,x_5],x_3]\circ Y\bigr\rangle
    =
    \langle Z\circ Y\rangle
    =
    A.
    \]
    Therefore $B=-A$. Combining this with
    $B=4C$ and $4C=A-B$, we conclude that
    \[
    B=4C=A-B=-2B.
    \]
    Thus $3B=0$. Since we work in characteristic zero,
    $B=0$, and hence $C=0$.
\end{proof}

\begingroup
\emergencystretch=1.5em
\raggedbottom
\renewcommand{\bibliofont}{\normalfont\normalsize}

\endgroup


\begin{thebibliography}{48}
\setlength{\itemsep}{0pt}
\setlength{\parsep}{0pt}

\bibitem{Aguiar}
M. Aguiar.
``Pre-{P}oisson algebras''.
In: \emph{Lett. Math. Phys.} 54.4 (2000), pp.~263--277.

\bibitem{AlbertPAss}
A. A. Albert.
``Power-associative rings''.
In: \emph{Trans. Amer. Math. Soc.} 64 (1948), pp.~552--593.

\bibitem{MR4368864}
Y. Bahturin.
\emph{Identical relations in {L}ie algebras}.
Vol.~68.
De Gruyter Expositions in Mathematics,
Second edition.
De Gruyter, Berlin, [2021] \copyright 2021, pp.~xxv+514.

\bibitem{Bai}
C. Bai.
``An introduction to pre-{L}ie algebras''.
In: \emph{Algebra and applications 1---non-associative algebras and categories}.
ISTE, London, 2020, pp.~245--273.

\bibitem{BG1}
C. Bai and D. Gao.
``Graded anti-pre-{L}ie algebraic structures on {W}itt and {V}irasoro algebras''.
In: \emph{J. Geom. Phys.} 214 (2025), Paper No. 105525, 15.

\bibitem{BG2}
C. Bai and D. Gao.
``Compatible root graded anti-pre-{L}ie algebraic structures on finite-dimensional complex simple {L}ie algebras''.
In: \emph{Int. J. Geom. Methods Mod. Phys.} 23.5 (2026), Paper No. 2540039, 20.

\bibitem{BaN}
A. A. Balinskii and S. P. Novikov.
``Poisson brackets of hydrodynamic type, {F}robenius algebras and {L}ie algebras''.
In: \emph{Dokl. Akad. Nauk SSSR} 283.5 (1985), pp.~1036--1039.

\bibitem{MR1987896}
J. M. Beck.
``Triples, algebras and cohomology''.
In: \emph{Repr. Theory Appl. Categ.} 2 (2003), pp.~1--59.

\bibitem{MR2763748}
N. Bergeron and J.-L. Loday.
``The symmetric operation in a free pre-{L}ie algebra is magmatic''.
In: \emph{Proc. Amer. Math. Soc.} 139.5 (2011), pp.~1585--1597.

\bibitem{BremnerDotsenko}
M. R. Bremner and V. Dotsenko.
\emph{Algebraic operads}.
An algorithmic companion.
CRC Press, Boca Raton, FL, 2016, pp.~xvii+365.

\bibitem{BMP}
M. R. Bremner, S. Madariaga, and L. A. Peresi.
``Structure theory for the group algebra of the symmetric group, with applications to polynomial identities for the octonions''.
In: \emph{Comment. Math. Univ. Carolin.} 57.4 (2016), pp.~413--452.

\bibitem{Bur}
D. Burde.
``Left-symmetric algebras, or pre-{L}ie algebras in geometry and physics''.
In: \emph{Cent. Eur. J. Math.} 4.3 (2006), pp.~323--357.

\bibitem{MR2135545}
F. Chapoton.
``On some anticyclic operads''.
In: \emph{Algebr. Geom. Topol.} 5 (2005), pp.~53--69.

\bibitem{ChapotonLivernet}
F. Chapoton and M. Livernet.
``Pre-{L}ie algebras and the rooted trees operad''.
In: \emph{Internat. Math. Res. Notices} 8 (2001), pp.~395--408.

\bibitem{ChenLiuChen}
Z. Chen, S. Liu, and L. Chen.
``Some results on anti-pre-{L}ie superalgebras and admissible {N}ovikov superalgebras''.
In: \emph{Quaest. Math.} 47.12 (2024), pp.~2447--2478.

\bibitem{DH}
V. Dotsenko and W. Heijltjes.
\emph{Operadic {G}r\"obner bases calculator}.
\url{https://irma.math.unistra.fr/~dotsenko/Operads.html}.
2014.

\bibitem{MR4549103}
V. Dotsenko, N. Ismailov, and U. Umirbaev.
``Polynomial identities in {N}ovikov algebras''.
In: \emph{Math. Z.} 303.3 (2023), Paper No. 60, 17.

\bibitem{DotsenkoKhoroshkin}
V. Dotsenko and A. Khoroshkin.
``Gr\"{o}bner bases for operads''.
In: \emph{Duke Math. J.} 153.2 (2010), pp.~363--396.

\bibitem{DKQuillen}
V. Dotsenko and A. Khoroshkin.
``Quillen homology for operads via {G}r\"{o}bner bases''.
In: \emph{Doc. Math.} 18 (2013), pp.~707--747.

\bibitem{DzhumadildaevNovikovJordan}
A. S. Dzhumadil'daev.
``Novikov-{J}ordan algebras''.
In: \emph{Comm. Algebra} 30.11 (2002), pp.~5207--5240.

\bibitem{DzhumadildaevNovikov}
A. S. Dzhumadil'daev.
``Codimension growth and non-{K}oszulity of {N}ovikov operad''.
In: \emph{Comm. Algebra} 39.8 (2011), pp.~2943--2952.

\bibitem{MR3241181}
A. S. Dzhumadil'daev and N. A. Ismailov.
``{$S_n$}- and {$GL_n$}-module structures on free {N}ovikov algebras''.
In: \emph{J. Algebra} 416 (2014), pp.~287--313.

\bibitem{MR1918188}
A. S. Dzhumadil'daev and C. L{\"o}fwall.
``Trees, free right-symmetric algebras, free {N}ovikov algebras and identities''.
In: \emph{Homology Homotopy Appl.} 4.2, part 1 (2002).
The Roos Festschrift volume, 1, pp.~165--190.

\bibitem{MR26647}
S. Eilenberg.
``Extensions of general algebras''.
In: \emph{Ann. Soc. Polon. Math.} 21 (1948), pp.~125--134.

\bibitem{adend}
D. Gao, G. Liu, and C. Bai.
``Anti-dendriform algebras, new splitting of operations and {N}ovikov-type algebras''.
In: \emph{J. Algebraic Combin.} 59.3 (2024), pp.~661--696.

\bibitem{MR1358617}
E. Getzler and M. M. Kapranov.
``Cyclic operads and cyclic homology''.
In: \emph{Geometry, topology, \& physics}.
Conf. Proc. Lecture Notes Geom. Topology, IV.
Int. Press, Cambridge, MA, 1995, pp.~167--201.

\bibitem{GinzburgKapranov}
V. Ginzburg and M. M. Kapranov.
``Koszul duality for operads''.
In: \emph{Duke Math. J.} 76.1 (1994), pp.~203--272.

\bibitem{MR1913297}
V. Hinich and A. Vaintrob.
``Cyclic operads and algebra of chord diagrams''.
In: \emph{Selecta Math. (N.S.)} 8.2 (2002), pp.~237--282.

\bibitem{Hoffbeck}
E. Hoffbeck.
``A {P}oincar\'{e}-{B}irkhoff-{W}itt criterion for {K}oszul operads''.
In: \emph{Manuscripta Math.} 131.1-2 (2010), pp.~87--110.

\bibitem{MR4610291}
Y. Hong, C. Bai, and L. Guo.
``Infinite-dimensional {L}ie bialgebras via affinization of {N}ovikov bialgebras and {K}oszul duality''.
In: \emph{Comm. Math. Phys.} 401.2 (2023), pp.~2011--2049.

\bibitem{albert}
D. P. Jacobs.
``The {A}lbert Nonassociative Algebra System: A Progress Report''.
In: \emph{Proceedings of the International Symposium on Symbolic and Algebraic Computation}.
ISSAC '94.
New York, NY, USA: Association for Computing Machinery, 1994, pp.~41--44.

\bibitem{KLSX}
P. Kolesnikov, Y. Li, Y. Sheng, and N. Xu.
``Cohomology theory of {Novikov} algebras and applications''.
2026.
arXiv: \href{https://arxiv.org/abs/2609.06304}{\texttt{2609.06304}} [math.RA].

\bibitem{LiuBai}
G. Liu and C. Bai.
``Anti-pre-{L}ie algebras, {N}ovikov algebras and commutative 2-cocycles on {L}ie algebras''.
In: \emph{J. Algebra} 609 (2022), pp.~337--379.

\bibitem{NSP}
G. Liu and C. Bai.
``New splittings of operations of {P}oisson algebras and transposed {P}oisson algebras and related algebraic structures''.
In: \emph{Algebra without borders---classical and constructive nonassociative algebraic structures---foundations and applications}.
STEAM-H: Sci. Technol. Eng. Agric. Math. Health.
Springer, Cham, [2023] \copyright 2023, pp.~49--96.

\bibitem{TPA}
G. Liu and C. Bai.
``A bialgebra theory for transposed {P}oisson algebras via anti-pre-{L}ie bialgebras and anti-pre-{L}ie {P}oisson bialgebras''.
In: \emph{Commun. Contemp. Math.} 26.8 (2024), Paper No. 2350050, 49.

\bibitem{RPA}
G. Liu and C. Bai.
``A new approach to the bialgebra theory for relative {P}oisson algebras''.
In: \emph{J. Algebraic Combin.} 62.2 (2025), Paper No. 39, 44.

\bibitem{LodayVallette}
J.-L. Loday and B. Vallette.
\emph{Algebraic operads}.
Vol.~346.
Grundlehren der mathematischen Wissenschaften [Fundamental Principles of Mathematical Sciences].
Springer, Heidelberg, 2012, pp.~xxiv+634.

\bibitem{MR2884985}
L. Makar-Limanov and U. Umirbaev.
``The {F}reiheitssatz for {N}ovikov algebras''.
In: \emph{TWMS J. Pure Appl. Math.} 2.2 (2011), pp.~228--235.

\bibitem{MarklRemm}
M. Markl and E. Remm.
``Algebras with one operation including {P}oisson and other {L}ie-admissible algebras''.
In: \emph{J. Algebra} 299.1 (2006), pp.~171--189.

\bibitem{MR4688832}
F. A. Mashurov and B. K. Sartayev.
``Metabelian {L}ie and perm algebras''.
In: \emph{J. Algebra Appl.} 23.4 (2024),
Paper No. 2450065, 12.

\bibitem{Molev}
A. I. Molev.
``The algebraic structure of the {L}ie algebra of vector fields on the line''.
In: \emph{Mat. Sb. (N.S.)} 134 (176).1 (1987), pp.~82--92, 142--143.

\bibitem{Normatov}
Z. Normatov.
``Compatible anti-pre-Lie algebras''.
In: \emph{J. Algebra Appl.} (2026), Paper No. 2650220.

\bibitem{oeis}
OEIS Foundation Inc.
\emph{The {O}n-{L}ine {E}ncyclopedia of {I}nteger {S}equences}.
Published electronically at \url{https://oeis.org}.
2026.

\bibitem{Os}
J. M. Osborn.
``Novikov algebras''.
In: \emph{Nova J. Algebra Geom.} 1.1 (1992), pp.~1--13.

\bibitem{Segal}
D. Segal.
``Free left-symmetric algebras and an analogue of the {P}oincar\'{e}-{B}irkhoff-{W}itt theorem''.
In: \emph{J. Algebra} 164.3 (1994), pp.~750--772.

\bibitem{sage}
The Sage Developers.
\emph{{S}ageMath, the {S}age {M}athematics {S}oftware {S}ystem ({V}ersion 9.5)}.
\url{https://www.sagemath.org}.
2022.

\bibitem{MR2427978}
B. Vallette.
``Manin products, {K}oszul duality, {L}oday algebras and {D}eligne conjecture''.
In: \emph{J. Reine Angew. Math.} 620 (2008), pp.~105--164.

\bibitem{ZhaoLiu}
Q. Zhao and G. Liu.
``The Levi-Civita products of Leibniz algebras with nondegenerate skew-symmetric $2$-cocycles''.
In: \emph{J. Algebra Appl.} (2027), Paper No. 2750009.

\bibitem{ZSSS}
K. A. Zhevlakov, A. M. Slin'ko, I. P. Shestakov, and A. I. Shirshov.
\emph{Rings that are nearly associative}.
Vol.~104.
Pure and Applied Mathematics.
Translated from the Russian by Harry F. Smith.
New York--London: Academic Press, Inc., 1982, pp.~xi+371.

\end{thebibliography}
\end{document}